\allowdisplaybreaks \numberwithin{equation}{section}
\numberwithin{equation}{section}
\newtheorem{theorem}{Theorem}[section]
\newtheorem{proposition}[theorem]{Proposition}
\newtheorem{lemma}[theorem]{Lemma}
\theoremstyle{definition}
\theoremstyle{remark}
\begin{document}
\title[localized nodal solutions for critical $p-$Laplacian equations ]
{Localized nodal solutions for $p-$Laplacian equations with critical exponents in $\mathbb{R}^N$}

 \author{Fengshuang Gao and Yuxia Guo}

\address{  Department of Mathematical Science, Tsinghua University, Beijing, P.R.China}
\email{gfs16@mails.tsinghua.edu.cn}

\address{  Department of Mathematical Science, Tsinghua University, Beijing, P.R.China}
\email{yguo@mail.tsinghua.edu.cn}



\thanks{This work is partially supported by NSFC(11771235)}

\begin{abstract}
In this paper, we consider the existence of localized sign-changing solutions for the $p-$Laplacian nonlinear Schr\"odinger equation
$$
-\epsilon^p\Delta_pu+V(x)|u|^{p-2}u=|u|^{p^*-2}u+\mu|u|^{q-2}u,~~u\in W^{1,p}(\mathbb{R}^N),
$$
where $1<p<N$, $p_N=\max\{p,p^*-1\}<q<p^*=\frac{Np}{N-p}$, $\mu>0$, $\Delta_p$ is the $p-$Laplacian operator. By using the penalization method together with the truncation method and a blow-up argument, we establish for small $\epsilon$ the existence of a sequence of localized nodal solutions concentrating near a given local minimum point of the potential function.
\end{abstract}

\maketitle
\section{introduction}\label{1}

We consider the following $p-$Laplacian equation with critical exponents in $\mathbb{R}^N$:
\begin{equation}\label{1-1}
-\epsilon^p\Delta_pv+V(x)|v|^{p-2}v=|v|^{p^*-2}v+\mu|v|^{q-2}v,~~~~\hbox{in~~} \mathbb{R}^N,
\end{equation}
where $1<p<N$, $p_N=\max\{p,p^*-1\}<q<p^*=\frac{Np}{N-p}$, $\mu>0$, $\Delta_p$ is the $p-$Laplacian operator, $\Delta_pv=\nabla(|\nabla v|^{p-2}\nabla v)$, $V(x)$ satisfies some proper conditions.

 Over the last decades, there has been extensive studies on the existence, multiplicity and qualitative properties for localized solutions of equation \eqref{1-1} with $p=2$, i.e.,
\begin{equation}\label{ad1-1}
-\epsilon^2\Delta v+V(x) v=|v|^{2^*-2}v+|v|^{q-2}v,~~~~\hbox{ in }\mathbb{R}^N.
\end{equation}
 Under various assumptions on the potential $V$, various types of concentration behaviors have been investigated mainly for positive solutions, see \cite{ams,r}. In particular, using Lyapunov-Schmit reduction, the existence of single bump and multi-bump solutions of \eqref{ad1-1} concentrating at each given nondegenerate critical point of $V$ was proved in \cite{fw} and \cite{o1,o2} respectively. Moreover, using a penalization approach, it was shown in \cite{df1} that such solutions can be constructed to concentrate at a local minimum point of $V$ which may be degenerate. Also in \cite{df2} and \cite{g}, the authors glued the single bump positive solutions and obtained multi-bump positive solution at separate local minimum points of $V$. However, in \cite{kw}, Kang and Wei showed that there could not be multi-spiked positive solutions concentrating near a local minimum point. Therefore, in order to have localized solutions near a local minimum, one has to look  for sign-changing solutions.

In this paper, we are intend to construct solutions as localized nodal solutions clustered near the minimum set of $V$. However, even in the semilinear case, i.e., when $p=2$, the powerful Lyapunov-Schmit reduction method is not quite effective due to the fact that little is known about the non-degeneracy of sign-changing solutions  for the corresponding limiting equation, see \cite{mw}. In fact, it turns out this method is very sensitive to the number of positive and negative peaks (\cite{dp2}), which in turn largely limits how many localized sign changing solutions can be constructed. Due to these difficulties, there are a few works concerning this problem, see for example, \cite{as,bcw,dp1,f}. Recently, in \cite{cw}, utilizing a penalization method, Chen and Wang constructed an infinite sequence of localized solutions concentrating near a local minimum point of $V$ for a semilinear subcritical equation. It seems that their method is quite effective and can be generalized to a class of quasilinear Schr\"odinger equation, see \cite{llw1}. Very recently, the result in \cite{cw} was extended in \cite{clw} for the critical exponent case. Following the idea of \cite{clw}, we establish multiple localized nodal solutions for general critical $p-$Laplacian equation.

For this purpose, we introduce the following assumptions on $V(x)$ :
\begin{enumerate}
\item[$(V_1)$] $V\in C^1(\mathbb{R}^N,\mathbb{R})$ and there exists $0<\tilde{a}<\tilde{b}$ such that $\tilde{a}\leq V(x)\leq \tilde{b}$, $\forall x\in\mathbb{R}^N$.
\item[$(V_2)$] There is a bounded domain $\mathcal{M}\subset\mathbb{R}^N$ with smooth boundary $\partial \mathcal{M}$ such that
    $$
    \overrightarrow{n}(x)\cdot\nabla V(x)>0,~~~~\forall x\in\partial\mathcal{M},
    $$
    where $\overrightarrow{n}(x)$ denotes the outward normal to $\partial\mathcal{M}$ at $x$.
\end{enumerate}
Note that $(V_2)$ is satisfied if $V$ has an isolated local minimum set, i.e., $V$ has a local trapping well. Under the assumption $(V_2)$, the critical set of $V$ inside $\mathcal{M}$
$$
\mathcal{A}=\{x\in\mathcal{M}|\nabla V(x)=0\}
$$
is a nonempty compact subset of $\mathcal{M}$. Without loss of generality, we will assume $0\in\mathcal{A}$. For a set $B\subset\mathbb{R}^N$ and $\delta>0$, we denote
$$
B^\delta=\left\{x\in\mathbb{R}^N|dist(x,B)=\inf\limits_{y\in B}|x-y|<\delta\right\},
$$
$$
B_\delta=\left\{x\in\mathbb{R}^N|\delta x\in B\right\}.
$$
With these notations, the main result of this paper is the following:
\begin{theorem}\label{th1-1}
Assume $\max\{p^*-1,p\}<q<p^*$ and $(V_1)$, $(V_2)$ hold. Then for any integer $k$, there exists $\epsilon_k>0$ such that if $0<\epsilon<\epsilon_k$, equation \eqref{1-1} has $k$ pairs of sign-changing solutions $\pm v_{j,\epsilon}$, $j=1, 2, \cdots, k$. Furthermore, there exists a constant $c=c(k)>0$, for any $\delta>0$ there exists $\epsilon_k(\delta)>0$ such that for $0<\epsilon<\epsilon_k(\delta)$,
$$
v_{j,\epsilon}\leq ce^{-\frac{c}{\epsilon}dist(x,\mathcal{A}^\delta)},~~1\leq j\leq k.
$$
\end{theorem}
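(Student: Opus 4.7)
The plan is to transfer the semilinear, critical-exponent, localized sign-changing scheme of \cite{clw} to the quasilinear setting by combining (i) a penalization--truncation of the nonlinearity, (ii) a descending-flow pseudo-index argument for sign-changing critical points, and (iii) a blow-up analysis adapted to the $p$-Laplacian. The first step is to rescale by $u_\epsilon(x)=v_\epsilon(\epsilon x)$, turning \eqref{1-1} into
\[
-\Delta_p u + V(\epsilon x)|u|^{p-2}u = |u|^{p^*-2}u + \mu|u|^{q-2}u \quad \text{on } \mathbb{R}^N,
\]
and then to fix a smooth bounded $\Omega$ with $\mathcal{A}\subset\Omega\subset\subset\mathcal{M}$ and build a penalized-truncated right-hand side $g_\epsilon(x,t)$: inside $\Omega_\epsilon:=\{x:\epsilon x\in\Omega\}$, $g_\epsilon(x,t)$ agrees with $|t|^{p^*-2}t+\mu|t|^{q-2}t$ for $|t|\le T$ and is extended $C^1$ and subcritically for $|t|>T$ (with $T$ a large parameter to be fixed later); outside $\Omega_\epsilon$, $g_\epsilon(x,t)$ is the pointwise minimum of the original right-hand side and $\frac{\tilde{a}}{2k+2}|t|^{p-2}t$ in the spirit of Del Pino--Felmer. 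The associated even $C^1$ functional $I_\epsilon$ on $W^{1,p}(\mathbb{R}^N)$ then satisfies the Palais--Smale condition at every level, since both critical growth and the competition with the linear term have been neutralized.

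To produce \emph{sign-changing} critical points I would introduce open cones $P^\pm\subset W^{1,p}(\mathbb{R}^N)$ consisting of functions close in norm to nonnegative (resp.\ nonpositive) ones and build a locally Lipschitz pseudo-gradient for $I_\epsilon$ whose associated negative flow leaves $W^{1,p}(\mathbb{R}^N)\setminus(P^+\cup P^-)$ positively invariant. Running a Krasnoselskii-genus / pseudo-index minimax over symmetric sets of genus at least $j$ that are disjoint from $P^+\cup P^-$, I obtain for each $j=1,\ldots,k$ a pair $\pm u_{j,\epsilon}$ of sign-changing critical points of $I_\epsilon$ with critical values $c_{j,\epsilon}$ bounded uniformly in $\epsilon$; the bound comes from a fixed $k$-dimensional family of cut-off test functions adapted to $\Omega$.

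The heart of the argument is the blow-up analysis. I would derive a uniform $L^\infty$ bound on $u_{j,\epsilon}$ by Moser iteration using the uniform energy bound; were iteration to fail, a rescaling around a maximum point would produce an entire nontrivial solution of the critical $p$-Laplacian equation on $\mathbb{R}^N$ of energy strictly below $\tfrac{1}{N}S^{N/p}$, contradicting the sharp energy estimate for the $c_{j,\epsilon}$. Once $\|u_{j,\epsilon}\|_\infty\le T$ the truncation is inactive. To rule out a concentration point $y_\epsilon$ with $\epsilon y_\epsilon\to y_0\notin\mathcal{A}$, I would rescale $w_\epsilon(\cdot)=u_{j,\epsilon}(\cdot+y_\epsilon)$ and apply a local Pohozaev identity together with $(V_2)$, which forces an incompatible sign for the boundary flux on $\partial\mathcal{M}$; thus the penalization is also inactive in the support of $u_{j,\epsilon}$, so the rescaled solutions solve the original equation. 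A standard barrier for the inequality $-\Delta_p u+\tfrac12\tilde a|u|^{p-2}u\le 0$ (valid outside $\mathcal{A}_\epsilon^{\delta/\epsilon}$) then yields pointwise exponential decay which, after undoing the rescaling $v_{j,\epsilon}(\cdot)=u_{j,\epsilon}(\cdot/\epsilon)$, gives the estimate stated in Theorem \ref{th1-1}.

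The step I expect to be the main obstacle is the uniform $L^\infty$ / blow-up analysis: for the quasilinear operator with critical growth, standard Moser iteration must be coupled with a local concentration-compactness principle, and one needs a delicate verification that each of the $k$ minimax levels $c_{j,\epsilon}$ stays strictly below $\tfrac{1}{N}S^{N/p}$ as $\epsilon\to 0$. This is exactly where the restriction $q>\max\{p,p^*-1\}$ enters, via bump-interaction estimates in the spirit of Brezis--Nirenberg.
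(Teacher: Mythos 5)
Your overall architecture (rescaling, penalization plus truncation, invariant-cone minimax for sign-changing critical points, blow-up analysis, Pohozaev identity to force concentration into $\mathcal{A}$, barrier for the decay) matches the paper's. But there is one genuine gap at exactly the point you flag as the ``main obstacle,'' and it is not a technical obstacle to be overcome --- the mechanism you propose there is wrong. You plan to rule out blow-up by showing that each minimax level $c_{j,\epsilon}$ stays strictly below the compactness threshold $\tfrac{1}{N}S^{N/p}$, in the spirit of Brezis--Nirenberg. This cannot work: the levels $c_j(\epsilon)$ of a genus-type minimax are increasing and unbounded in $j$ (this is part (2) of the abstract Theorem~\ref{th3-1} used in the paper, and is forced by the construction of $\Gamma_j$ from $j$-dimensional families), so for $j\geq 2$ they will in general exceed any fixed threshold. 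The whole point of the paper's construction is that one must obtain higher topological type critical points \emph{at levels where the Palais--Smale condition for the original functional fails}, and then exclude bubbling a posteriori by a different device. Concretely, the paper uses the truncation $m_\epsilon$ (which caps the effective growth at the subcritical power $r$ with $p^*-1<r<q$ once $|u|\gtrsim \epsilon^{-1}e^{-\mathrm{dist}(\epsilon x,\mathcal{M})}$) to get $(PS)_c$ at \emph{all} levels of the modified functional, then applies a profile decomposition separating bounded profiles ($\Lambda_1$) from bubbles ($\Lambda_\infty$), shows via the penalization term that bubbles could only sit within distance $\sigma_n^{-1/p}$ of $\mathcal{M}_{\epsilon_n}$, and finally kills them with a local Pohozaev identity on annuli $B_{(\bar C+3)\sigma_n^{-1/p}}(x_n)$: the subcritical term contributes at rate $\sigma_n^{\frac{N-p}{p}r-N}$ while the boundary terms are $O(\sigma_n^{-\frac{N-p}{p}})$, and the inequality $r>p^*-1$ makes these incompatible. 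That rate comparison --- not an energy threshold and not bump-interaction estimates --- is where the hypothesis $q>\max\{p,p^*-1\}$ enters.

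A secondary point: with your Del Pino--Felmer-type modification (keeping the untruncated critical nonlinearity inside $\Omega_\epsilon$ for $|u|\le T$ and truncating only above a level $T$), you still must show a posteriori that $\|u_{j,\epsilon}\|_\infty\le T$ uniformly, and you are back to the same blow-up exclusion problem with no threshold available. So the $L^\infty$ bound has to come from the profile-decomposition-plus-Pohozaev route (or an equivalent), not from Moser iteration seeded by an energy bound below $\tfrac1N S^{N/p}$. The remaining steps of your outline --- the invariant cones, the uniform bound $c_{j,\epsilon}\le m_j$ from a fixed $k$-dimensional family of test functions, the Pohozaev argument with $(V_2)$ forcing $\epsilon_n x_{n,k}\to x^*\in\mathcal{A}$, and the exponential barrier --- are all consistent with what the paper does.
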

Now let us outline the method. Define $u(x)=v(\epsilon x)$ then equation \eqref{1-1} is equivalent to
\begin{equation}\label{1-2}
-\Delta_pu+V(\epsilon x)|u|^{p-2}u=|u|^{p^*-2}u+\mu|u|^{q-2}u,~~~~\hbox{in~~} \mathbb{R}^N.
\end{equation}
Solutions of this equation are critical points of the functional
\begin{equation}\label{1-3}
J_\epsilon(u)=\frac{1}{p}\int_{\mathbb{R}^N}|\nabla u|^p+V(\epsilon x)|u|^p-\frac{1}{p^*}\int_{\mathbb{R}^N}|u|^{p^*}-\frac{\mu}{q}\int_{\mathbb{R}^N}|u|^q.
\end{equation}
However, due to the shifts and the dilations, the imbedding $W^{1,p}(\mathbb{R}^N)\hookrightarrow L^{p^*}(\mathbb{R}^N)$ is continuous, but not compact even if locally, hence the standard critical point theory is not able to be applied directly. One classical way to get over this difficulty is to establish a $(PS)_c$ sequence with $c$ smaller than the energy of its corresponding limiting functional. Nevertheless, we are trying to obtain multiple sign-changing solutions which will be obtained as higher topological type critical points of the energy functional at energy levels where compactness condition fails in general. To deal the compactness and localization issue, we adapt a truncation approach in \cite{lz,zll} and a penalization method in \cite{bj,clw} to our case.

To be more precise, let $\varphi\in C_0^\infty(\mathbb{R})$ be such that $\varphi(s)=1$ for $|s|\leq 1$; $\varphi(s)=0$ for $|s|\geq 2$; $|\varphi'(s)|\leq 2$, $\varphi$ is even and decreasing in the interval $[1,2]$. For $\sigma\in (0,1]$, $x\in\mathbb{R}^N$, $t\in\mathbb{R}$, define
$$
\begin{array}{c}
{b_{\sigma}(x, t)=\varphi\left(\sigma e^{dist(\sigma x, M)} t\right),
m_{\sigma}(x, t)=\int_{0}^{t} b_{\sigma}(x, \tau) d \tau}, \\
 {F_{\sigma}(x,t)=\frac{1}{p^{*}}|t|^{r}\left|m_{\sigma}(x,t)\right|^{p^{*}-r}}+\frac{\mu}{q}|t|^r|m_\epsilon(x,t)|^{q-r},
 {f_{\sigma}(x, t)=\frac{\partial}{\partial t} F_{\sigma}(x, t)},
 \end{array}
$$
where $r$ is a fixed number such that
$$
p_N=\max\{p,p^*-1\}<r<q.
$$
For $\sigma=0$, we understand $b_0(x,t)=1$, $m_0(x,t)=t$, $F_0(x,t)=F(t)=\frac{1}{p^*}|t|^{p^*}+\frac{\mu}{q}|t|^q$ and $f_0(x,t)=f(t)=|t|^{p^*-2}t+\mu|t|^{q-2}t$.

 Now we define
\begin{equation}\label{1-7}
I_\epsilon(u)=\frac{1}{p}\int_{\mathbb{R}^N}|\nabla u|^p+V(\epsilon x)|u|^p+Q_\epsilon(u)-\int_{\mathbb{R}^N}F_\epsilon(x,u),
\end{equation}
where $Q_\epsilon(u)$ is a modification of the penalization term in \cite{clw}. In details, let $\zeta\in C_0^\infty(\mathbb{R})$ be a cut-off function such that $0\leq \zeta(t)\leq 1$ and $\zeta'(t)\geq 0$ for every $t\in\mathbb{R}$; $\zeta(t)=0$ for $t\leq 0$, $\zeta(t)>0$ if $t>0$ and $\zeta(t)=1$ if $t\geq 1$. Moreover, there exist constants $C>0$, $a>0$  and $\rho>0$ such that
$$
t\zeta'(t)\geq C\zeta(t),~~~~0\leq t\leq \rho,
$$
and
$$
\zeta(t)\geq Ct^a,~~~~0\leq t\leq \rho.
$$
Let
\begin{equation}\label{1-4}
\xi_\epsilon(x)=\left\{\begin{array}{ll}
0,&\hbox{if}~~x\in M_\epsilon,\\
\epsilon^{-\gamma}\zeta(dist(x,M_\epsilon)),&\hbox{if}~~x\notin M_\epsilon,
\end{array}\right.
\end{equation}
where $\gamma>0$ satisfies
\begin{equation}\label{1-5}
\gamma>\max\left\{\frac{(p+a)p}{N-p},\frac{p^2r}{(r-p)(N-p)}\right\}.
\end{equation}
It is easy to see that  for $\epsilon$ small $\xi_\epsilon$ is a $C^1$ function and
$$
\xi_\epsilon(x)=0~~\hbox{if}~~x\in M_\epsilon,~~~~\xi_\epsilon(x)=\epsilon^{-\gamma}~~\hbox{if}~~x\notin (M_\epsilon)^1.
$$
For $u\in W^{1,p}(\mathbb{R}^N)$, let
\begin{equation}\label{1-6}
Q_\epsilon(u)=\frac{1}{p\beta}\left(\int_{\mathbb{R}^N}\xi_\epsilon|u|^pdx-1\right)_+^\beta,
\end{equation}
where $(t)_+=\max\{t,0\}$, $\beta$ satisfies
$1<\beta<r/p$, and $\beta\leq\frac{1}{2-p}$ if $1<p<2$.

We say $u$ is a critical point of $I_\epsilon$ if for any $\varphi\in W^{1,p}(\mathbb{R}^N)$
\begin{align}\label{1-8}
\langle I'_\epsilon(u),\varphi\rangle=&\int_{\mathbb{R}^N}|\nabla u|^{p-2}\nabla u\nabla \varphi+V(\epsilon x)|u|^{p-2}u\varphi+\left(\int_{\mathbb{R}^N}\xi_\epsilon|u|^p-1\right)_+^{\beta-1}\int_{\mathbb{R}^N}\xi_\epsilon |u|^{p-2}u\varphi\notag\\
&-\int_{\mathbb{R}^N}f_\epsilon(x,u)\varphi.
\end{align}
It is easy to see that the critical point of $I_\epsilon$ is a solution of
\begin{equation}\label{1-9}
\begin{aligned}
-\Delta_p u+V(\epsilon x)|u|^{p-2}u+\left(\int_{\mathbb{R}^N}\xi_\epsilon|u|^p-1\right)_+^{\beta-1}\xi_\epsilon|u|^{p-2}u=f_\epsilon(x,u).
\end{aligned}
\end{equation}
Using a new minimax theorem for sign-changing solutions in \cite{llw-jde,zll1}, see also \cite{blw}, we obtain for any positive integer $k$, there exists $\epsilon_k$ such that the modified functional $I_{\epsilon}$ has at least $k$ pairs of sign-changing critical points $u_{k,\epsilon}$ if $0<\epsilon<\epsilon_k$. From here we need to show these critical points are in fact critical points of the original functional $J_{\epsilon}$, which means we need to have a proper decay estimate of $\{u_{k,\epsilon}\}$ and these solutions can only concentrate in subsets contained in $\mathcal{M}_{\epsilon}$ and having a positive distance away from $\partial \mathcal{M}_{\epsilon}$. To do this, we employ a decomposition result and show that these solutions can only blow up in a domain sufficient close to $\mathcal{M}_{\epsilon}$, then an estimate result in safe domain and a local Pohozaev identity rule out the possibility of bubble blow-ups, see also \cite{cpy,ds1,gv,zll}, which in turns gives the uniform bound result. Finally, another Pohozaev identity yields that these solutions concentrate in $\mathcal{A}_\epsilon$ and satisfy a proper decay estimate, so these are solutions of the original problem.

The paper is organized as follows. In section \ref{2}, we prove the $(PS)$ condition for $I_{\epsilon}$, with which we can deduce the existence of infinitely many solutions for \eqref{1-9} in section \ref{3}. Then in section \ref{4}, we give a uniform bound for the solutions obtained in section \ref{3}, and finally prove Theorem \ref{th1-1} in section \ref{5}.

Throughout this paper, we will denote the norms of $W^{1,p}(\mathbb{R}^N)$ and $L^p(\mathbb{R}^N)$ by $\|\cdot\|$ and $\|\cdot\|_p$ respectively.
\section{The Palais-Smale condition for modified functional}\label{2}
\begin{proposition}\label{prop2-1}
The function $b_\epsilon$, $m_\epsilon$ and $f_\epsilon$ satisfy the following properties
\begin{enumerate}
\item $0\leq b_\epsilon(x,s)\leq 1$.
\item $m_\epsilon(x,s)=s$ if $|s|<\epsilon^{-1}e^{-dist(\epsilon x, M)}$.
\item $sm_\epsilon(x,s)\geq 0$, $|m_\epsilon(x,s)|\geq |s|b_\epsilon(x,s)$.
\item $\min\left\{|s|,\epsilon^{-1}e^{-dist(\epsilon x,M)}\right\}\leq |m_\epsilon(x,s)|\leq\min\left\{|s|,2\epsilon^{-1}e^{-dist(\epsilon x,M)}\right\}$.
\item $|f_\epsilon(x,s)|\leq |s|^{r-1}|m_\epsilon(x,s)|^{p^*-r}+\mu|s|^{r-1}|m_\epsilon(x,s)|^{q-r}\leq |s|^{p^*-1}+\mu|s|^{q-1}$.
\item $\frac{1}{r}sf_\epsilon(x,s)-F_\epsilon(x,s)=\frac{p^*-r}{p^*r}|s|^{r+1}|m_\epsilon(x,s)|^{p^*-r-1}b_\epsilon(x,s)+\frac{\mu(q-r)}{qr}|s|^{r+1}|m_\epsilon(x,s)|^{q-r-1}b_\epsilon(x,s).$
\item
$F_\epsilon(x,s)-\frac{1}{p^*}sf_\epsilon(x,s)=\frac{p^*-r}{(p^*)^2}|s|^r|m_\epsilon(x,s)|^{p^*-r-1}(|m_\epsilon(x,s)-|s|b_\epsilon(x,s))$\\
$+\frac{\mu(q-r)}{p^*q}|s|^r|m_\epsilon(x,s)|^{q-r-1}\left(|m_\epsilon(x,s)|-|s|b_\epsilon(x,s)\right)+\left(\frac{1}{q}-\frac{1}{p^*}\right)\mu|s|^r|m_\epsilon(x,s)|^{q-r}$.
\item $\nabla_x F_\epsilon(x,s)=-\epsilon\nabla dist(\epsilon x,M)\left(\frac{p^*-r}{p^*}|s|^r|m_\epsilon(x,s)|^{p^*-r-1}+\frac{\mu(q-r)}{q}|s|^r|m_\epsilon(x,s)|^{q-r-1}\right)$\\

    \hspace{50pt}$\times\left(|m_\epsilon(x,s)|-|s|b_\epsilon(x,s)\right)$.
\end{enumerate}
\end{proposition}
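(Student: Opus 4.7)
The plan is to verify each of the eight identities and inequalities by direct computation, grouping them by the technique involved and saving the chain-rule computation in item (8) for last, since it is the only nontrivial piece.

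Items (1)-(4) are immediate consequences of the definition $b_\epsilon(x,t)=\varphi(\epsilon e^{dist(\epsilon x,M)}t)$ and the hypotheses on $\varphi$. First, (1) follows from $0\le\varphi\le 1$. For (2) I observe that when $|s|<\epsilon^{-1}e^{-dist(\epsilon x,M)}$ the argument of $\varphi$ lies in $[-1,1]$, so $b_\epsilon\equiv 1$ on the integration interval and $m_\epsilon(x,s)=s$. For (3), since $b_\epsilon\ge0$, the map $s\mapsto m_\epsilon(x,s)$ is nondecreasing and odd in $s$ (because $\varphi$ is even in its argument), which gives $sm_\epsilon\ge0$; the inequality $|m_\epsilon(x,s)|\ge|s|b_\epsilon(x,s)$ uses that $b_\epsilon(x,\tau)$, as a function of $|\tau|$, attains its minimum on $[0,|s|]$ at $\tau=s$ (because $\varphi$ is even and nonincreasing on $[0,2]$). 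Item (4) then follows: on $|s|\le\epsilon^{-1}e^{-dist(\epsilon x,M)}$ we have $|m_\epsilon|=|s|$, while beyond that $\varphi$ vanishes for arguments of modulus $\ge 2$, so the integral saturates between $\epsilon^{-1}e^{-dist(\epsilon x,M)}$ and $2\epsilon^{-1}e^{-dist(\epsilon x,M)}$.

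For (5)-(7) I compute $f_\epsilon=\partial_t F_\epsilon$ by the product rule, using $\partial_t|m_\epsilon(x,t)|^{\alpha}=\alpha|m_\epsilon|^{\alpha-2}m_\epsilon\,b_\epsilon(x,t)$. This yields
\[
f_\epsilon(x,s)=\tfrac{r}{p^*}|s|^{r-2}s|m_\epsilon|^{p^*-r}+\tfrac{p^*-r}{p^*}|s|^r|m_\epsilon|^{p^*-r-2}m_\epsilon b_\epsilon+\tfrac{\mu r}{q}|s|^{r-2}s|m_\epsilon|^{q-r}+\tfrac{\mu(q-r)}{q}|s|^r|m_\epsilon|^{q-r-2}m_\epsilon b_\epsilon.
\]
For (5) I use $sm_\epsilon\ge0$ and $|s|b_\epsilon\le|m_\epsilon|$ from (3) to absorb the two "$b_\epsilon$-terms" into the first type, then bound $|m_\epsilon|\le|s|$ from (4) for the second inequality. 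For (6) I form $\tfrac{1}{r}sf_\epsilon$, exploit $sm_\epsilon=|s||m_\epsilon|$, and observe the first and third terms of $\tfrac{1}{r}sf_\epsilon$ precisely cancel $F_\epsilon$. Item (7) is a parallel but slightly longer bookkeeping: after forming $F_\epsilon-\tfrac{1}{p^*}sf_\epsilon$ I combine the $|s|^r|m_\epsilon|^{p^*-r}$ and $|s|^{r+1}|m_\epsilon|^{p^*-r-1}b_\epsilon$ terms into the single difference $|m_\epsilon|-|s|b_\epsilon$, while for the $q$-part I split the coefficient $\tfrac{1}{q}-\tfrac{r}{p^*q}$ as $\tfrac{q-r}{p^*q}+(\tfrac{1}{q}-\tfrac{1}{p^*})$ so that exactly one piece assembles with the $b_\epsilon$-term into $|m_\epsilon|-|s|b_\epsilon$ and the leftover is the residual term on the right-hand side.

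The only real work is item (8), and this is where I expect to spend the most care. The chain rule gives $\nabla_x F_\epsilon=\bigl[\tfrac{p^*-r}{p^*}|s|^r|m_\epsilon|^{p^*-r-2}m_\epsilon+\tfrac{\mu(q-r)}{q}|s|^r|m_\epsilon|^{q-r-2}m_\epsilon\bigr]\nabla_x m_\epsilon$, so the task reduces to evaluating $\nabla_x m_\epsilon(x,s)=\int_0^s\nabla_x b_\epsilon(x,\tau)\,d\tau$. Writing $A(x)=\epsilon e^{dist(\epsilon x,M)}$, one has $\nabla_x b_\epsilon(x,\tau)=\tau\varphi'(A(x)\tau)\nabla_x A(x)$ with $\nabla_x A(x)=\epsilon A(x)\,\nabla dist(\epsilon x,M)$. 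Substituting $u=A\tau$ and integrating by parts, $\int_0^s\tau\varphi'(A\tau)d\tau=A^{-2}\int_0^{As}u\varphi'(u)du=A^{-2}\bigl(As\,\varphi(As)-A m_\epsilon(x,s)\bigr)=A^{-1}(s\,b_\epsilon(x,s)-m_\epsilon(x,s))$. This collapses the messy integral and produces $\nabla_x m_\epsilon(x,s)=\epsilon\,\nabla dist(\epsilon x,M)\,(sb_\epsilon-m_\epsilon)$. Finally, using $\text{sgn}(m_\epsilon)=\text{sgn}(s)$ from (3) to rewrite $|m_\epsilon|^{\alpha-2}m_\epsilon(sb_\epsilon-m_\epsilon)=-|m_\epsilon|^{\alpha-1}(|m_\epsilon|-|s|b_\epsilon)$, I factor out the common $-\epsilon\nabla dist(\epsilon x,M)(|m_\epsilon|-|s|b_\epsilon)$ to obtain exactly the formula in (8). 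The integration-by-parts step is the delicate point; the rest of the proposition is essentially bookkeeping.
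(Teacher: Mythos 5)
Your proof is correct, and it is exactly the direct elementary verification that the paper omits by deferring to Lemma 2.1 of the cited reference: I checked the algebra in (5)--(7) and the integration by parts in (8), where $\int_0^s\tau\varphi'(A\tau)\,d\tau=A^{-1}(s\,b_\epsilon(x,s)-m_\epsilon(x,s))$ with $A=\epsilon e^{dist(\epsilon x,M)}$ indeed yields $\nabla_x m_\epsilon=\epsilon\nabla dist(\epsilon x,M)(sb_\epsilon-m_\epsilon)$, and the sign identity $sm_\epsilon=|s||m_\epsilon|$ converts this to the stated form. The only cosmetic point worth adding is that $\nabla dist(\epsilon x,M)$, hence (8), is to be understood almost everywhere (the distance function is merely Lipschitz), which is how the paper uses it.
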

\begin{proof}
The proof is similar to Lemma 2.1 in \cite{zll}, we omit it here.
\end{proof}
\begin{lemma}\label{le2-1}
 For every fixed $\epsilon$, $I_\epsilon$ satisfies $(PS)_c$ condition.
\end{lemma}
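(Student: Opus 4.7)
The plan is to verify $(PS)_c$ via the three standard moves for $p$-Laplacian problems: uniform boundedness, tightness yielding strong convergence in subcritical Lebesgue spaces, and monotonicity of $-\Delta_p$ to upgrade to norm convergence. The key structural input is that, at fixed $\epsilon>0$, the truncation $m_\epsilon$ strips the critical growth from $f_\epsilon$, so no concentration-compactness dichotomy will be needed. Let $\{u_n\}\subset W^{1,p}(\mathbb{R}^N)$ satisfy $I_\epsilon(u_n)\to c$ and $I'_\epsilon(u_n)\to 0$. I would first compute $I_\epsilon(u_n)-\frac{1}{r}\langle I'_\epsilon(u_n),u_n\rangle$. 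Writing $A_n:=\bigl(\int_{\mathbb{R}^N}\xi_\epsilon|u_n|^p\,dx-1\bigr)_+$, the kinetic piece contributes $\bigl(\frac{1}{p}-\frac{1}{r}\bigr)\|u_n\|^p$, the penalization piece reduces to $\bigl(\frac{1}{p\beta}-\frac{1}{r}\bigr)A_n^\beta-\frac{1}{r}A_n^{\beta-1}$ (both leading coefficients positive since $p<r$ and $p\beta<r$), and by Proposition~\ref{prop2-1}(6) the remaining integrand $\frac{1}{r}u_nf_\epsilon(x,u_n)-F_\epsilon(x,u_n)$ is pointwise nonnegative. Since the left-hand side is at most $C+o(1)\|u_n\|$, a Young absorption of the $A_n^{\beta-1}$ correction yields uniform bounds on $\|u_n\|$ and on $\int_{\mathbb{R}^N}\xi_\epsilon|u_n|^p$.

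Because $\xi_\epsilon\equiv\epsilon^{-\gamma}$ outside the bounded set $(M_\epsilon)^1$, the second bound translates into uniform smallness of the $L^p$-tails of $u_n$. After extraction, $u_n\rightharpoonup u$ in $W^{1,p}(\mathbb{R}^N)$, strongly in $L^p_{loc}$ and a.e.; combining tightness with Gagliardo--Nirenberg interpolation then promotes this to $u_n\to u$ strongly in $L^s(\mathbb{R}^N)$ for every $s\in[p,p^*)$. Now the effective subcriticality is exploited: by Proposition~\ref{prop2-1}(4)--(5),
$$|f_\epsilon(x,u_n)|\leq (2\epsilon^{-1})^{p^*-r}e^{-(p^*-r)dist(\epsilon x,M)}|u_n|^{r-1}+\mu(2\epsilon^{-1})^{q-r}e^{-(q-r)dist(\epsilon x,M)}|u_n|^{r-1},$$
so the exponential weight (bounded on compact sets, integrably small at infinity) combined with the strong $L^r$ convergence supplies a dominated-convergence envelope. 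This passes both $\int f_\epsilon(x,u_n)\varphi\to\int f_\epsilon(x,u)\varphi$ for every $\varphi\in W^{1,p}(\mathbb{R}^N)$ and $\int f_\epsilon(x,u_n)u_n\to\int f_\epsilon(x,u)u$; the same dominated-convergence argument gives $A_n\to A$ and $\int\xi_\epsilon|u_n|^{p-2}u_n(u_n-u)\to 0$.

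To finish, I would test the difference $\langle I'_\epsilon(u_n)-I'_\epsilon(u),u_n-u\rangle\to 0$. The previous step annihilates every term except the kinetic and potential contributions, leaving
$$\int_{\mathbb{R}^N}\bigl(|\nabla u_n|^{p-2}\nabla u_n-|\nabla u|^{p-2}\nabla u\bigr)\cdot\nabla(u_n-u)+V(\epsilon x)\bigl(|u_n|^{p-2}u_n-|u|^{p-2}u\bigr)(u_n-u)\,dx\to 0.$$
The classical $p$-Laplacian vector inequalities (direct for $p\geq 2$, via H\"older duality for $1<p<2$) then force $u_n\to u$ in $W^{1,p}(\mathbb{R}^N)$. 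I expect the main obstacle to lie in the dominated-convergence step: the point is to confirm that the weight $e^{-dist(\epsilon x,M)}$ really does yield an $n$-uniform envelope, and the hypothesis $p_N=\max\{p,p^*-1\}<r<q$ is precisely what lets $|u_n|^{r-1}$ be controlled by the $L^r$-limit via H\"older; for $1<p<2$ the stipulation $\beta\leq 1/(2-p)$ on the penalization exponent is what ensures the final vector inequality can absorb the penalization derivative. No Lions-type dichotomy is needed, because the truncation has already replaced the critical exponent by an effectively subcritical one at each fixed $\epsilon$.
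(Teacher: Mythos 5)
Your overall architecture (boundedness from $I_\epsilon-\frac1r\langle DI_\epsilon,\cdot\rangle$, effective subcriticality of $f_\epsilon$ via the exponential weight in $m_\epsilon$, monotonicity inequalities for $-\Delta_p$) matches the paper's, but there is a genuine gap in the middle step. The bound $Q_\epsilon(u_n)\leq C$ only gives $\int_{\mathbb{R}^N\setminus(M_\epsilon)^1}|u_n|^p\leq\epsilon^\gamma(1+C)$, a \emph{fixed} positive constant for fixed $\epsilon$; it does not show $\sup_n\int_{|x|>R}|u_n|^p\to0$ as $R\to\infty$. So tightness, and hence your asserted global strong convergence $u_n\to u$ in $L^s(\mathbb{R}^N)$ for $s\in[p,p^*)$, is not justified. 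This matters exactly where you use it: to conclude $A_n\to A$ and $\int\xi_\epsilon|u_n|^{p-2}u_n(u_n-u)\to0$, since $\xi_\epsilon$ is supported on the \emph{unbounded} set $\mathbb{R}^N\setminus M_\epsilon$ and equals $\epsilon^{-\gamma}$ near infinity; weak convergence alone only yields $\liminf_n A_n\geq A$. (The $f_\epsilon$ terms are fine without global convergence: the weight $e^{-(q-r)\,\operatorname{dist}(\epsilon x,M)}$, local compact embedding on $\{\operatorname{dist}(\epsilon x,M)<R\}$, and global $L^r$ boundedness by interpolation suffice, exactly as in \eqref{2-8}.)

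The paper sidesteps this by proving that $\{u_n\}$ is Cauchy: it pairs $DI_\epsilon(u_n)-DI_\epsilon(u_m)$ with $u_n-u_m$, replaces both coefficients $\lambda_n,\lambda_m$ by their common limit $\lambda$ (legitimate because the accompanying integrals are bounded), and then the penalization contribution becomes $\lambda\int\xi_\epsilon\bigl(|u_n|^{p-2}u_n-|u_m|^{p-2}u_m\bigr)(u_n-u_m)+o(1)$, which is $\geq o(1)$ by monotonicity of $t\mapsto|t|^{p-2}t$ and can simply be discarded from the lower bound; no convergence of $A_n$ and no strong convergence on the support of $\xi_\epsilon$ are needed. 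Your ``test against the weak limit'' version can be repaired in the same spirit, by bounding $\lambda_n\int\xi_\epsilon|u_n|^{p-2}u_n(u_n-u)$ from below by $\lambda_\infty\int\xi_\epsilon|u|^{p-2}u(u_n-u)+o(1)\to0$ using monotonicity and weak convergence, or by genuinely proving tightness with a far-field cutoff test function and the decay of $f_\epsilon$; but as written the step fails. A minor further point: the restriction $\beta\leq 1/(2-p)$ plays no role in this lemma (it is needed later); what makes the case $1<p<2$ work here is the reverse form of \eqref{2-4} combined with the uniform bound $\|u_n\|\leq\eta_M$.
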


\begin{proof}
Let $\{u_n\}$ be a $(PS)_c$ sequence, take $\varphi=u_n$ in equation \eqref{1-8}, by (6) in Proposition \ref{prop2-1}, we have
\begin{equation}\label{2-1}
\begin{aligned}
c+o(\|u_n\|)\geq& I_\epsilon(u_n)-\frac{1}{r}\langle DI_\epsilon(u_n),u_n\rangle\\
=&\left(\frac{1}{p}-\frac{1}{r}\right)\int_{\mathbb{R}^N}|\nabla u_n|^p+V(\epsilon x)|u_n|^p+\int_{\mathbb{R}^N}\frac{1}{r}f_\epsilon(x,u_n)u_n-F_\epsilon(x,u_n)\\
&+\frac{1}{p\beta}\left(\int_{\mathbb{R}^N}\xi_\epsilon(x)|u_n|^p-1\right)_+^{\beta}-\frac{1}{r}\left(\int_{\mathbb{R}^N}\xi_\epsilon(x)|u_n|^p-1\right)_+^{\beta-1}\int_{\mathbb{R}^N}\xi_\epsilon|u_n|^p\\
\geq&\left(\frac{1}{p}-\frac{1}{r}\right)\int_{\mathbb{R}^N}|\nabla u_n|^p+V(\epsilon x)|u_n|^p+\frac{1}{p\beta}\left(\int_{\mathbb{R}^N}\xi_\epsilon(x)|u_n|^p-1\right)_+^{\beta}\\
&-\frac{1}{r}\left(\int_{\mathbb{R}^N}\xi_\epsilon(x)|u_n|^p-1\right)_+^{\beta-1}\int_{\mathbb{R}^N}\xi_\epsilon|u_n|^p.
\end{aligned}
\end{equation}
Since $p<p\beta<r$, we get that there exists $\eta_M>0$ independent of $\epsilon$ such that $\|u_n\|\leq \eta_M$ and $Q_\epsilon(u_n)\leq\eta_M$. We assume that, up to a subsequence, $u_n\rightharpoonup u$ in $W^{1,p}(\mathbb{R}^N)$ as $n\to\infty$ and
\begin{equation}\label{2-2}
\lambda_n=\left(\int_{\mathbb{R}^N}\xi_\epsilon|u_n|^p-1\right)_+^{\beta-1}\to\lambda,~~n\to\infty.
\end{equation}
Then we have as $n,m\to\infty$
\begin{equation}\label{2-3}
\begin{aligned}
o(1)=&\langle DI_\epsilon (u_n)-DI_\epsilon(u_m),u_n-u_m\rangle\\
=&\int_{\mathbb{R}^N}\left(|\nabla u_n|^{p-2}\nabla u_n-|\nabla u_m|^{p-2}\nabla u_m,\nabla u_n-\nabla u_m\right)+V(\epsilon x)\left(|u_n|^{p-2}u_n-|u_m|^{p-2}u_m,u_n-u_m\right)\\
&+\lambda_n\int_{\mathbb{R}^N}\xi_\epsilon|u_n|^{p-2}u_n(u_n-u_m)-\lambda_m\int_{\mathbb{R}^N}\xi_\epsilon|u_m|^{p-2}u_m(u_n-u_m)\\
&-\int_{\mathbb{R}^N}(f_\epsilon(x,u_n)-f_\epsilon(x,u_m))(u_n-u_m).\\
\end{aligned}
\end{equation}
In order to estimate \eqref{2-3}, the following elementary inequalities are very useful, see \cite{lz}. For $p>1$ there exists a constant $c_p$ such that for $\xi,\eta\in\mathbb{R}^N$, we have
\begin{equation}\label{2-4}
\begin{array}{ll}
{\left(|\xi|^{p-2} \xi-|\eta|^{p-2} \eta, \xi-\eta\right) \geq c_{p}|\xi-\eta|^{p}}, & {\text { if } p \geq 2}, \\
 {\left(|\xi|^{p-2} \xi-|\eta|^{p-2} \eta, \xi-\eta\right) \geq c_{p}|\xi-\eta|^{2}\left(|\xi|^{2-p}+|\eta|^{2-p}\right)^{-1}}, & {\text { if } 1<p<2}.
 \end{array}
\end{equation}
For $p\geq 2$, we obtain
\begin{equation}\label{2-5}
 \int_{\mathbb{R}^{N}}\left(\left|\nabla u_{n}\right|^{p-2} \nabla u_{n}-\left|\nabla u_{m}\right|^{p-2} \nabla u_{m}, \nabla u_{n}-\nabla u_{m}\right)d x\geq
C\int_{\mathbb{R}^N}\left|\nabla u_{n}-\nabla u_{m}\right|^{p} d x.
\end{equation}
For $1<p<2$, we obtain
\begin{equation}\label{2-6}
\begin{aligned}
&\int_{\mathbb{R}^{N}}\left(\left|\nabla u_{n}\right|^{p-2} \nabla u_{n}-\left|\nabla u_{m}\right|^{p-2} \nabla u_{m}, \nabla u_{n}-\nabla u_{m}\right) d x\\
\geq& C\left(\int_{\mathbb{R}^N}\left(\left|\nabla u_{n}\right|^{p-2} \nabla u_{n}-\left|\nabla u_{m}\right|^{p-2} \nabla u_{m}, \nabla u_{n}-\nabla u_{m}\right)
dx\right)\left(\int_{\mathbb{R}^N}\left(\left|\nabla u_{n}\right|^{p}+\left|\nabla u_{m}\right|^{p}\right) d x\right)^{\frac{2-p}{p}} \\
\geq& C\left(\int_{\mathbb{R}^N}\left|\left(\left|\nabla u_{n}\right|^{p-2} \nabla u_{n}-\left|\nabla u_{m}\right|^{p-2} \nabla u_{m}, \nabla u_{n}-\nabla u_{m}\right)\right|^{\frac{p}{2}}\left(\left|\nabla u_{n}\right|^{2-p}+\left|\nabla u_{m}\right|^{2-p}\right)^{\frac{p}{2}} dx\right)^{\frac{2}{p}}\\
 \geq& C\left(\int_{\mathbb{R}^N}\left|\nabla u_{n}-\nabla u_{m}\right|^{p} d x\right)^{\frac{2}{p}}.
\end{aligned}
\end{equation}
Similarly, we have
\begin{equation}\label{ad2-5}
\begin{aligned}
\int_{\mathbb{R}^N}V(\epsilon x)(|u_n|^{p-2}u_n-|u_m|^{p-2}u_m,u_n-u_m)\geq\left\{\begin{array}{ll}
C\|u_n-u_m\|_p^p,&\hbox{if}~~p\geq2,\\
C\|u_n-u_m\|_p^{2},&\hbox{if}~~1<p<2.
\end{array}\right.
\end{aligned}
\end{equation}
From \eqref{2-2}, we get that
\begin{equation}\label{2-7}
\begin{aligned}
&\lambda_n\int_{\mathbb{R}^N}\xi_\epsilon|u_n|^{p-2}u_n(u_n-u_m)-\lambda_m\int_{\mathbb{R}^N}\xi_\epsilon|u_m|^{p-2}u_m(u_n-u_m)\\
=&\lambda\int_{\mathbb{R}^N}\xi_\epsilon\left(|u_n|^{p-2}u_n-|u_m|^{p-2}u_m\right)\left(u_n-u_m\right)+o(1),~~~~\hbox{as}~~n,m\to\infty.
\end{aligned}
\end{equation}
Using Proposition \ref{prop2-1}, we deduce that
\begin{equation}\label{2-8}
\begin{aligned}
&\left|\int_{\mathbb{R}^N}\left(f_\epsilon(x,u_n)-f_\epsilon(x,u_m)\right)\left(u_n-u_m\right)\right|\\
\leq&C\int_{\mathbb{R}^N}\left(\left(2\epsilon^{-1}e^{-dist(\epsilon x,M)}\right)^{q-r}+\left(2\epsilon^{-1}e^{-dist(\epsilon x,M)}\right)^{p^*-r}\right)\left(|u_n|^{r-1}+|u_m|^{r-1}\right)|u_n-u_m|\\
\leq&Ce^{-(q-r)R}\int_{dist(\epsilon x,M)\geq R}\left(|u_n|^r+|u_m|^r\right)+C\left(\int_{dist(\epsilon x,M)<R}|u_n|^r+|u_m|^r\right)^{\frac{r-1}{r}}\left(\int_{dist(\epsilon x,M)<R}|u_n-u_m|^r\right)^{\frac{1}{r}}\\
\leq& Ce^{-(q-r)R}+C\left(\int_{dist(\epsilon x,M)<R}|u_n-u_m|^r\right)^{\frac{1}{r}}\to 0,~~~~\hbox{as}~~m,n\to\infty.
\end{aligned}
\end{equation}
Combining \eqref{2-3}, \eqref{2-5}, \eqref{2-6}, \eqref{ad2-5}, \eqref{2-7}, \eqref{2-8}, we obtain that
\begin{equation}\label{2-10}
\begin{aligned}
o(1)=&\langle DI_\epsilon (u_n)-DI_\epsilon(u_m),u_n-u_m\rangle\\
\geq&\left\{\begin{array}{ll}
C\|u_n-u_m\|^p-o(1),&\hbox{if}~~p\geq2,\\
C\|u_n-u_m\|^{2}-o(1),&\hbox{if}~~1<p<2,
\end{array}\right.
\end{aligned}
\end{equation}
which yields that for fixed $\epsilon>0$, $\|u_n-u_m\|\to 0$ as $n,m\to\infty$.
\end{proof}
\section{Existence of multiple sign-changing critical points of $I_\epsilon$}\label{3}
In this section, we construct multiple nodal critical points of the perturbed functionals. To obtain multiple sign-changing critical points of $I_\epsilon$, we adapt an abstract critical point theorem in \cite{llw-jde}. For reader's convenience, we state it here.

Let $X$ be a Banach space, $f$ be an even $C^1$ functional on $X$. Let $\tilde{P}$, $\tilde{Q}$ be open convex sets  of $X$, $\tilde{Q}=-\tilde{P}$. Set
$$
O=\tilde{P}\cup \tilde{Q},~~~~\Sigma=\partial \tilde{P}\cap\partial \tilde{Q}.
$$
Assume
\begin{enumerate}
\item[$(I_1)$] $f$ satisfies the Palais-Smale condition.
\item[$(I_2)$] $c^*=\inf\limits_{x\in\Sigma}f(x)>0$.
\end{enumerate}
Assume there exists an odd continuous map $A:X\to X$ satisfying
\begin{enumerate}
\item[$(A_1)$] Given $c_0,b_0>0$, there exists $b=b(c_0,b_0)>0$ such that if $\|Df(x)\|\geq b_0$, $|f(x)|\leq c_0$, then
    $$
    \langle Df(x),x-Ax\rangle\geq b\|x-Ax\|>0.
    $$
\item[$(A_2)$] $A(\partial \tilde{P})\subset \tilde{P}$, $A(\partial \tilde{Q})\subset \tilde{Q}$.
\end{enumerate}
Define
$$
\Gamma_j=\{E|E\subset X, E \hbox{ compact}, -E=E, \gamma(E\cap\eta^{-1}(\Sigma))\geq j \hbox{ for } \eta \in\Lambda\},
$$
$$
\Lambda=\{\eta|\eta\in C(X,X), \eta\hbox{ odd}, \eta(P)\subset P, \eta(Q)\subset Q, \eta(x)=x \hbox{ if } f(x)<0\},
$$
where $\gamma$ is the genus of symmetric sets
$$
\gamma(E)=\inf\left\{n|\hbox{there exists an odd map }\varphi:E\to\mathbb{R}^n\setminus\{0\}\right\}.
$$
Assume that
\begin{enumerate}
\item[$(\Gamma)$] $\Gamma_j$ is nonempty.
\end{enumerate}
Define
$$
c_j=\inf\limits_{A\in\Gamma_j}\sup\limits_{x\in A\setminus O}f(x),~~~~~j=1,2,\cdots,
$$
$$
K_c=\{x|Df(x)=0,f(x)=c\},~~K_c^*=K_c\setminus O.
$$
The following abstract critical point theorem is from \cite{llw-jde}:
\begin{theorem}\label{th3-1}
Assume  $(I_1)$, $(I_2)$, $(A_1)$, $(A_2)$, $(\Gamma)$ hold. Then
\begin{enumerate}
\item $c_j\geq c^*$, $K^*_{c_j}\neq \emptyset.$
\item $c_j\to\infty$, as $j\to\infty$.
\item If $c_j=c_{j+1}=\cdots=c_{j+k-1}=c$, then $\gamma(K_c^*)\geq k$.
\end{enumerate}
\end{theorem}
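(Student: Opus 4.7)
The plan is to deduce all three conclusions from a single deformation lemma tailored to the invariant cones $\tilde{P}, \tilde{Q}$, combined with the standard Krasnoselskii genus machinery. The core technical statement to establish is: if $c\in\mathbb{R}$ satisfies $K_c^*=\emptyset$, then for some $\varepsilon'>0$ there exists $\eta\in\Lambda$ with
\[
\eta\bigl(\{x\in X:f(x)\leq c+\varepsilon'\}\setminus O\bigr)\subset\{x\in X:f(x)\leq c-\varepsilon'\}.
\]

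To construct $\eta$, I would build an odd, locally Lipschitz vector field $W$ on $X\setminus K_f$ of the form $W(x)=\chi(x)(x-Ax)+(1-\chi(x))V(x)$, where $V$ is a standard odd pseudo-gradient for $f$ and $\chi$ is a symmetric cutoff equal to $1$ in a neighborhood of $\partial\tilde{P}\cup\partial\tilde{Q}$ and $0$ outside a slightly larger neighborhood. By $(A_1)$, the lower bound $\langle Df(x),x-Ax\rangle\geq b\|x-Ax\|$ holds on bounded energy strips away from $K_f$, and by definition of pseudo-gradient $\langle Df(x),V(x)\rangle\geq \tfrac{1}{2}\|Df(x)\|^2$; together these show that $W$ is a descent direction for $f$, so the PS condition $(I_1)$ supplies the quantitative gradient estimate driving the deformation. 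For invariance: on $\partial\tilde{P}$ we have $W(x)=x-Ax$, and since $Ax\in\tilde{P}$ by $(A_2)$ with $\tilde{P}$ convex and open, the segment $[x,Ax]$ enters $\tilde{P}$, i.e.\ $-W$ points into $\tilde{P}$; the analogous fact at $\partial\tilde{Q}$ follows by oddness. Hence the flow of $-W$ preserves $\tilde{P}$ and $\tilde{Q}$, and after modifying the flow time to be zero on $\{f<0\}$ we obtain $\eta\in\Lambda$.

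Granting this deformation lemma, (1) is immediate: the identity lies in $\Lambda$, so $\gamma(E\cap\Sigma)\geq j\geq 1$ for every $E\in\Gamma_j$, giving $E\cap\Sigma\neq\emptyset$ and hence $\sup_{E\setminus O}f\geq c^*$; and if $K_{c_j}^*=\emptyset$, choosing $E\in\Gamma_j$ with $\sup_{E\setminus O}f<c_j+\varepsilon'$ and noting that composition with $\eta$ preserves $\Lambda$ (so $\eta(E)\in\Gamma_j$), one gets $\sup_{\eta(E)\setminus O}f\leq c_j-\varepsilon'$, contradicting the minimax definition. For (3), assume $\gamma(K_c^*)\leq k-1$, take a symmetric open neighborhood $U\supset K_c^*$ with $\gamma(\overline{U})\leq k-1$, apply the deformation lemma on $X\setminus U$ to produce $\eta\in\Lambda$ pushing $\{f\leq c+\varepsilon'\}\setminus O\setminus U$ into $\{f\leq c-\varepsilon'\}$, pick $E\in\Gamma_{j+k-1}$ with $\sup_{E\setminus O}f<c+\varepsilon'$, and use the subadditivity of genus to show $E':=\overline{E\setminus U}\in\Gamma_j$ while $\sup_{\eta(E')\setminus O}f\leq c-\varepsilon'$, contradicting $c_j=c$. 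Finally, (2) follows from (3) combined with PS: if $\sup_jc_j\leq M$, then $\bigcup_{c^*\leq c\leq M}K_c^*$ is compact by $(I_1)$ and hence of finite genus, yet clustering the values $c_j$ and applying (3) would force it to carry infinite genus, a contradiction.

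The main obstacle is the construction of $W$: the generic pseudo-gradient $V$ need not respect the cones $\tilde{P},\tilde{Q}$, so near $\partial\tilde{P}\cup\partial\tilde{Q}$ it must be replaced entirely by $x-Ax$, and ensuring that this replacement remains a descent direction for $f$ is exactly what hypothesis $(A_1)$ is designed to guarantee. Matching the cutoff $\chi$ and the pseudo-gradient inequality across the transition region, while preserving oddness and local Lipschitz continuity, is the delicate technical point; the remaining steps are routine genus bookkeeping.
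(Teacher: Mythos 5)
The paper itself does not prove Theorem \ref{th3-1}: it is quoted from \cite{llw-jde} without argument, so there is no in-paper proof to compare against. Your sketch is the standard proof of such invariant-cone minimax theorems (as in \cite{blw} and \cite{llw-jde}): a deformation lemma whose flow decreases $f$ and leaves $\tilde{P}$ and $\tilde{Q}$ invariant, followed by routine genus bookkeeping, and your treatment of (1), (3) and the accumulation argument for (2) is sound in outline. Two technical points need repair. First, the conclusion of your deformation lemma is stated too strongly: $K_c^*=\emptyset$ only says $K_c\subset O$, and a trajectory starting outside $O$ may enter $O$ (which is forward invariant) and stall near a critical point of $K_c\cap O$, so one can only conclude
\[
\eta\bigl(\{f\leq c+\varepsilon'\}\setminus O\bigr)\subset\{f\leq c-\varepsilon'\}\cup O.
\]
This weaker form still suffices: since $\eta(O)\subset O$ for $\eta\in\Lambda$, one has $\eta(E)\setminus O\subset\eta(E\setminus O)\setminus O\subset\{f\leq c-\varepsilon'\}$, so the minimax contradiction goes through unchanged. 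Second, $A$ is only assumed continuous, so the field $x-Ax$ (and hence your $W$) need not be locally Lipschitz and the flow is not directly well defined; one must first replace $A$ by an odd, locally Lipschitz map $B$ retaining $(A_1)$ and $(A_2)$, built by a partition-of-unity average of nearby values $Ay$ --- here the openness and convexity of $\tilde{P}$ is exactly what guarantees that such an average of points of $\tilde{P}$ stays in $\tilde{P}$. Both repairs are standard and are carried out in the cited references; with them your argument is complete.
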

In the following, we verify that the functional $I_\epsilon$ satisfies all the assumptions of Theorem \ref{th3-1}. In Lemma \ref{le2-1}, we have proved that $I_\epsilon$ satisfies the assumption $(I_1)$, i.e., the Palais-Smale condition.

Let
$$
P=\{u|u\in W^{1,p}(\mathbb{R}^N), u\geq 0, a.e. x\in\mathbb{R}^N\},~~~~Q=-P,
$$
$$
P^\delta=\{u|u\in W^{1,p}(\mathbb{R}^N), dist(u,P)<\delta\},
$$
$$
Q^\delta=-P^\delta=\{u|u\in W^{1,p}(\mathbb{R^N}), dist(u,Q)<\delta\}.
$$
It is easy to check that there exists $\delta_0(\epsilon)$ and $c^*(\epsilon, \delta)$, such that for $\delta<\delta_0$, $I_\epsilon(u)\geq c^*$, for $u\in \partial P^\delta\cap \partial Q^\delta$. In fact
\begin{equation}\label{ad3-1}
\begin{aligned}
I_\epsilon(u)=&\frac{1}{p}\int_{\mathbb{R}^N}|\nabla u|^p+V(\epsilon x)|u|^p+\frac{1}{p\beta}\left(\int_{\mathbb{R}^N}\xi_\epsilon(x)|u|^p-1\right)^\beta_+-\int_{\mathbb{R}^N}F_\epsilon(x,u)\\
&\geq C\int_{\mathbb{R}^N}|\nabla u|^p+|u|^p-C\int_{\mathbb{R}^N}|u|^{p^*}-C\int_{\mathbb{R}^N}|u|^q\\
&\geq C\|u\|^p-C\|u\|^{p^*}-C\|u\|^{q},
\end{aligned}
\end{equation}
choose $\delta_0$ small enough, we can see it from \eqref{ad3-1}.

We define the operator $A: W^{1,p}(\mathbb{R}^N)\to W^{1,p}(\mathbb{R}^N)$. Given $u\in W^{1,p}(\mathbb{R}^N)$, define $v=Au\in W^{1,p}(\mathbb{R}^N)$ by the following equation
\begin{equation}\label{3-1}
-\Delta_p v+V(\epsilon x)|v|^{p-2}v+\left(\int_{\mathbb{R}^N}\xi_\epsilon|u|^p-1\right)_+^{\beta-1}\xi_\epsilon|z|^{p-2}v=f_\epsilon(x,u),
\end{equation}
where $z=v$ for $1<p<2$; $z=u$ for $p\geq 2$.
\begin{lemma}\label{le3-1}
$A$ is well defined and continuous on $W^{1,p}(\mathbb{R}^N)$.
\end{lemma}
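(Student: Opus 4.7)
My plan is to produce $v=Au$ as the unique minimizer of a strictly convex coercive auxiliary functional, and then promote weak to strong convergence via the monotonicity inequalities \eqref{2-4} already invoked in the proof of Lemma \ref{le2-1}.

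\emph{Well-posedness.} For fixed $u\in W^{1,p}(\mathbb{R}^N)$ set $\lambda(u)=(\int_{\mathbb{R}^N}\xi_\epsilon|u|^p-1)_+^{\beta-1}\geq 0$, which is continuous in $u$ because $\beta>1$. Consider
$$
\Phi_u(v)=\frac{1}{p}\int_{\mathbb{R}^N}|\nabla v|^p+\frac{1}{p}\int_{\mathbb{R}^N}V(\epsilon x)|v|^p+G_u(v)-\int_{\mathbb{R}^N}f_\epsilon(x,u)v,
$$
with $G_u(v)=\frac{\lambda(u)}{2}\int_{\mathbb{R}^N}\xi_\epsilon|u|^{p-2}v^2$ for $p\geq 2$ (so the penalization is linear in $v$) and $G_u(v)=\frac{\lambda(u)}{p}\int_{\mathbb{R}^N}\xi_\epsilon|v|^p$ for $1<p<2$; in either case the Euler--Lagrange equation $\Phi_u'(v)=0$ is exactly \eqref{3-1}. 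By Proposition \ref{prop2-1}(5) and Sobolev embedding, $v\mapsto\int f_\epsilon(x,u)v$ lies in $(W^{1,p}(\mathbb{R}^N))^*$, while the remaining terms are non-negative and convex, with the $p$-Laplacian and $V$-piece strictly convex and jointly coercive. Hence $\Phi_u$ admits a unique minimizer, which I take as $Au$.

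\emph{Weak limit.} Let $u_n\to u$ in $W^{1,p}(\mathbb{R}^N)$ and set $v_n=Au_n$. Testing \eqref{3-1} against $v_n$ and discarding the nonnegative penalization yields
$$
\|v_n\|^p\leq C\int_{\mathbb{R}^N}f_\epsilon(x,u_n)v_n\leq C(\|u_n\|)\|v_n\|,
$$
so $\{v_n\}$ is bounded. Along a subsequence $v_n\rightharpoonup\bar v$ weakly, a.e., and in $L^s_{\rm loc}$ for $s<p^*$. Since $u_n\to u$ strongly, $\lambda(u_n)\to\lambda(u)$, and the truncation argument of \eqref{2-8} adapts to give $\int f_\epsilon(x,u_n)\varphi\to\int f_\epsilon(x,u)\varphi$ for every $\varphi\in W^{1,p}(\mathbb{R}^N)$. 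Passing to the weak limit in \eqref{3-1} identifies $\bar v$ as the solution associated with $u$, so $\bar v=Au$ by uniqueness and the full sequence converges weakly.

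\emph{Strong convergence.} Subtracting the weak forms of \eqref{3-1} for $(u_n,v_n)$ and $(u,v)$ and testing with $v_n-v$ gives
\begin{align*}
&\int_{\mathbb{R}^N}(|\nabla v_n|^{p-2}\nabla v_n-|\nabla v|^{p-2}\nabla v,\nabla v_n-\nabla v)+\int_{\mathbb{R}^N}V(\epsilon x)(|v_n|^{p-2}v_n-|v|^{p-2}v)(v_n-v)\\
&\quad=\int_{\mathbb{R}^N}(f_\epsilon(x,u_n)-f_\epsilon(x,u))(v_n-v)-R_n,
\end{align*}
where the right-hand $f$-integral vanishes by a direct copy of \eqref{2-8}. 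The remainder $R_n$ collects the penalization difference; I decompose it into a coefficient piece $(\lambda(u_n)-\lambda(u))\int\xi_\epsilon|z_n|^{p-2}v_n(v_n-v)$ that vanishes because $\lambda(u_n)\to\lambda(u)$ and the integral is bounded, plus a piece with a favorable monotone sign: for $1<p<2$ it is $\lambda(u)\int\xi_\epsilon(|v_n|^{p-2}v_n-|v|^{p-2}v)(v_n-v)\geq 0$, and for $p\geq 2$ it equals $\lambda(u)\int\xi_\epsilon|u|^{p-2}(v_n-v)^2\geq 0$ up to a residual controlled via $|u_n|^{p-2}\to|u|^{p-2}$ in $L^{p/(p-2)}$ when $p>2$ (and trivially zero when $p=2$). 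Inserting these estimates into \eqref{2-5} when $p\geq 2$ or \eqref{2-6} when $1<p<2$ forces $\|v_n-v\|\to 0$. The main obstacle is the case $1<p<2$: \eqref{2-6} only provides a reverse-H\"older bound and the penalization density $|v|^{p-2}$ is singular on $\{v=0\}$; I would resolve this by exploiting the favorable sign and extracting a further a.e.\ convergent subsequence, then applying Vitali's theorem with the uniform $W^{1,p}$ bound on $v_n$ and the boundedness of $\xi_\epsilon$ to secure equi-integrability.
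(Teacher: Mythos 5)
Your proposal is correct and follows essentially the same route as the paper: $Au$ is produced as the unique minimizer of the convex auxiliary functional \eqref{3-2}, and continuity is obtained by subtracting the two equations, testing with the difference of the solutions, and combining the monotonicity inequalities \eqref{2-4}--\eqref{2-6}, \eqref{ad2-5} with the favorable sign of the penalization terms (your sequential detour through a weak limit plus uniqueness, and your use of the \eqref{2-8}-type splitting in place of the paper's Willem-type domination lemma, are only cosmetic differences). Your closing worry about $1<p<2$ is unfounded: \eqref{2-6}, together with the uniform bound on $\|\nabla v_n\|_p$, already bounds $\|\nabla v_n-\nabla v\|_p^{2}$ from above by the monotone gradient pairing, and the term $\lambda(u)\int\xi_\epsilon\left(|v_n|^{p-2}v_n-|v|^{p-2}v\right)(v_n-v)$ is nonnegative and simply discarded, so no Vitali-type argument is needed.
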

\begin{proof}
We prove $v=Au$ can be obtained by solving the following minimization problem:
$$
\inf\{G(v)|v\in W^{1,p}(\mathbb{R}^N)\},
$$
where
\begin{equation}\label{3-2}
\begin{aligned}
G(v)=&\frac{1}{p}\int_{\mathbb{R}^N}|\nabla v|^p+V(\epsilon x)|v|^p-\int_{\mathbb{R}^N}f_\epsilon (x,u)v\\
&+\left\{\begin{array}{ll}
\frac{1}{p}\left(\int_{\mathbb{R}^N}\xi_\epsilon|u|^p-1\right)_+^{\beta-1}\int_{\mathbb{R^N}}\xi_\epsilon|v|^p,&1<p<2,\\
\frac{1}{2}\left(\int_{\mathbb{R}^N}\xi_\epsilon|u|^p-1\right)_+^{\beta-1}\int_{\mathbb{R^N}}\xi_\epsilon|u|^{p-2}|v|^2,&p\geq 2.
\end{array}\right.
\end{aligned}
\end{equation}
Notice that
\begin{align*}
\int_{\mathbb{R}^N}f_\epsilon(x,u)v\leq&\left(\int_{\mathbb{R}^N}(|u|^{p^*-1})^{\frac{p^*}{p^*-1}}\right)^{\frac{p^*-1}{p^*}}\left(\int_{\mathbb{R}^N}|v|^{p^*}\right)^{\frac{1}{p^*}}+\mu\left(\int_{\mathbb{R}^N}(|u|^{q-1})^{\frac{q}{q-1}}\right)^{\frac{q-1}{q}}\left(\int_{\mathbb{R}^N}|v|^{q}\right)^{\frac{1}{q}}\\
\leq& C\|v\|,
\end{align*}
we have
\begin{equation}\label{3-3}
\begin{aligned}
G(v)\geq& C\int_{\mathbb{R}^N}|\nabla v|^p+|v|^p-\int_{\mathbb{R}^N}f_\epsilon(x,u)v\\
\geq& C\|v\|^p-C\|v\|,
\end{aligned}
\end{equation}
thus $G$ is coercive and weakly lower semicontinuous. Assume  $v_1$, $v_2$ are two solutions, then
\begin{equation}\label{3-4}
\begin{aligned}
&\langle DG(v_1)-DG(v_2), v_1-v_2\rangle\\
=&\int_{\mathbb{R}^N}\left(|\nabla v_1|^{p-2}\nabla v_1-|\nabla v_2|^{p-2}\nabla v_2\right)\left(\nabla v_1-\nabla v_2\right)+V(\epsilon x)\left(|v_1|^{p-2}v_1-|v_2|^{p-2}v_2\right)\left(v_1-v_2\right)\\
&+\left\{\begin{array}{ll}
\left(\int_{\mathbb{R}^N}\xi_\epsilon|u|^p-1\right)_+^{\beta-1}\int_{\mathbb{R}^N}\xi_\epsilon|u|^{p-2}(v_1-v_2)^2, &p\geq 2,\\
\left(\int_{\mathbb{R}^N}\xi_\epsilon|u|^p-1\right)_+^{\beta-1}\int_{\mathbb{R}^N}\xi_\epsilon(|v_1|^{p-2}v_1-|v_2|^{p-2}v_2)(v_1-v_2), &1<p<2.
\end{array}\right.
\end{aligned}
\end{equation}
As in \eqref{2-10}, we obtain
\begin{equation}\label{3-5}
\begin{aligned}
\langle DG(v_1)-DG(v_2), v_1-v_2\rangle\geq\left\{\begin{array}{ll}
C\|v_1-v_2\|^p,&\hbox{if}~~p\geq2,\\
C\|v_1-v_2\|^{2},&\hbox{if}~~1<p<2,
\end{array}\right.
\end{aligned}
\end{equation}
as a result, \eqref{3-1} has a unique solution $v=Au$.

Now we show that the map $A$ is continuous. First of all, $A$ maps bounded sets into bounded sets. In fact, let $v=Au$, then use $v$ as a test function in \eqref{3-1}, by $(5)$ in Proposition \ref{prop2-1}, we have
\begin{equation}\label{3-6}
\begin{aligned}
C\|v\|^p\leq&\int_{\mathbb{R}^N}|\nabla v|^p+V(\epsilon x)|v|^p+\left(\int_{\mathbb{R}^N}\xi_\epsilon|u|^p-1\right)^{\beta-1}_+\int_{\mathbb{R}^N}\xi_\epsilon|z|^{p-2}v^2\\
=&\int_{\mathbb{R}^N}f_\epsilon(x,u)v\leq C\int_{\mathbb{R}^N}\left(|u|^{p^*-1}+|u|^{q-1}\right)|v|\\
\leq&C\left(\|u\|^{p^*-1}+\|u\|^{q-1}\right)\|v\|.
\end{aligned}
\end{equation}
Let $v_i=Au_i$,
\begin{equation}\label{3-7}
\begin{aligned}
&\int_{\mathbb{R}^N}\left(|\nabla v_1|^{p-2}\nabla v_1-|\nabla v_2|^{p-2}\nabla v_2, \nabla v_1-\nabla v_2\right)+V(\epsilon x)\left(|v_1|^{p-2}v_1-|v_2|^{p-2}v_2,v_1-v_2\right)\\
=&\int_{\mathbb{R}^N}\left(f_\epsilon(x,u_1)-f_\epsilon (x,u_2)\right)(v_1-v_2)-\lambda_{u_1}\int_{\mathbb{R}^N}\xi_\epsilon|z_1|^{p-2}v_1(v_1-v_2)\\
&+\lambda_{u_2}\int_{\mathbb{R}^N}\xi_\epsilon |z_2|^{p-2}v_2(v_1-v_2),
\end{aligned}
\end{equation}
where $z_i=u_i$ if $p\geq 2$; $z_i=v_i$ if $1<p<2$.

For all $\varepsilon>0$, if $\|u_1-u_2\|\leq\varepsilon$, there exists $v\in L^{p^*}$, $w\in L^{q}$ such that $|f_\epsilon(x,u_1)-f_\epsilon (x,u_2)|\leq v+w$, and $\|v\|_{p^*}+\|w\|_{q}\leq 2\varepsilon$, see \cite{w}. Thus
\begin{equation}\label{3-8}
\begin{aligned}
&\int_{\mathbb{R}^N}\left(f_\epsilon(x,u_1)-f_\epsilon (x,u_2)\right)(v_1-v_2)\\
\leq&\|v\|_{p^*}^{p^*-1}\|v_1-v_2\|_{p^*}+\|w\|_q^{q-1}\|v_1-v_2\|_q=o(1)\|v_1-v_2\|.
\end{aligned}
\end{equation}
For $1<p<2$, by the definition of $\lambda_{u_i}$, we have
\begin{equation}\label{3-9}
\begin{aligned}
&-\lambda_{u_1}\int_{\mathbb{R}^N}\xi_\epsilon|v_1|^{p-2}v_1(v_1-v_2)+\lambda_{u_2}\int_{\mathbb{R}^N}\xi_\epsilon |v_2|^{p-2}v_2(v_1-v_2)\\
\leq&-(\lambda_{u_1}-\lambda_{u_2})\int_{\mathbb{R}^N}\xi_\epsilon|v_1|^{p-2}v_1(v_1-v_2)-\lambda_{u_2}\int_{\mathbb{R}^N}\xi_\epsilon\left(|v_1|^{p-2}v_1-|v_2|^{p-2}v_2\right)(v_1-v_2)\\
=&o(1)\|v_1-v_2\|.
\end{aligned}
\end{equation}
For $p\geq 2$, we have
\begin{equation}\label{3-10}
\begin{aligned}
&-\lambda_{u_1}\int_{\mathbb{R}^N}\xi_\epsilon|u_1|^{p-2}v_1(v_1-v_2)+\lambda_{u_2}\int_{\mathbb{R}^N}\xi_\epsilon |u_2|^{p-2}v_2(v_1-v_2)\\
\leq&-\lambda_{u_1}\int_{\mathbb{R}^N}\xi_\epsilon\left(|u_1|^{p-2}-|u_2|^{p-2}\right)v_1(v_1-v_2)-\lambda_{u_1}\int_{\mathbb{R}^N}\xi_\epsilon|u_2|^{p-2}(v_1-v_2)^2\\
&-(\lambda_{u_1}-\lambda_{u_2})\int_{\mathbb{R}^N}\xi_\epsilon|u_2|^{p-2}v_2(v_1-v_2)\\
=&o(1)\|v_1-v_2\|.
\end{aligned}
\end{equation}
Combining \eqref{2-5}, \eqref{2-6}, \eqref{ad2-5} with \eqref{3-7} \eqref{3-8},\eqref{3-9}, \eqref{3-10}, we obtain the desired.
\end{proof}
\begin{lemma}\label{le3-2}
There exists $\delta_0>0$ such that if $0<\delta<\delta_0$, then
$$
A(\partial P^\delta)\subset P^\delta,~~~~A(\partial Q^\delta)\subset Q^\delta.
$$
\end{lemma}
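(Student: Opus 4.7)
The standard route is to test \eqref{3-1} for $v=Au$ against $\varphi=-v^-\in W^{1,p}(\mathbb{R}^N)$ and control $\|v^-\|$ by $\|u^-\|$. Since $dist(v,P)\leq \|v^-\|$ and, in this framework, $\|u^-\|$ is comparable to $dist(u,P)$ in $W^{1,p}$, this will suffice. The statement for $Q^\delta$ then follows because $A$ is odd: the substitution $u\mapsto -u$ leaves \eqref{3-1} invariant up to a global sign, since $f_\epsilon(x,\cdot)$, the $p$-Laplacian, and the penalty coefficient are all odd in the relevant variable.

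On the left-hand side of \eqref{3-1} evaluated at $\varphi=-v^-$, the diffusion and mass terms yield
\[
\int_{\mathbb{R}^N}|\nabla v|^{p-2}\nabla v\cdot\nabla \varphi+V(\epsilon x)|v|^{p-2}v\,\varphi=\int_{\mathbb{R}^N}|\nabla v^-|^p+V(\epsilon x)(v^-)^p\geq c_1\|v^-\|^p,
\]
with $c_1=\min(1,\tilde a)$. The penalty contribution $\lambda_u\int_{\mathbb{R}^N}\xi_\epsilon|z|^{p-2}v(-v^-)$ reduces via a direct pointwise sign check to $\lambda_u\int\xi_\epsilon|u|^{p-2}(v^-)^2\geq 0$ when $p\geq 2$ (where $z=u$), and to $\lambda_u\int\xi_\epsilon(v^-)^p\geq 0$ when $1<p<2$ (where $z=v$); in both regimes it is non-negative. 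For the right-hand side, the key observation---following from Proposition \ref{prop2-1} and the oddness of $m_\epsilon(x,\cdot)$---is that $f_\epsilon(x,t)$ has the same sign as $t$. Therefore $f_\epsilon(x,u)\varphi\leq 0$ on $\{u\geq 0\}$, while on $\{u<0\}$ property (5) of Proposition \ref{prop2-1} gives the pointwise bound $|f_\epsilon(x,u)|\leq (u^-)^{p^*-1}+\mu(u^-)^{q-1}$. H\"older's inequality and the Sobolev embedding $W^{1,p}\hookrightarrow L^{p^*}\cap L^q$ then produce
\[
\int_{\mathbb{R}^N}f_\epsilon(x,u)\,\varphi\leq C\bigl(\|u^-\|^{p^*-1}+\|u^-\|^{q-1}\bigr)\|v^-\|.
\]

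Combining the two sides yields $\|v^-\|^{p-1}\leq C\bigl(\|u^-\|^{p^*-1}+\|u^-\|^{q-1}\bigr)$. Since $u\in\partial P^\delta$ gives $\|u^-\|\leq C_0\delta$, and since both exponents $(p^*-1)/(p-1)$ and $(q-1)/(p-1)$ strictly exceed $1$ (because $p^*,q>p$), we obtain
\[
\|v^-\|\leq C\bigl(\delta^{(p^*-1)/(p-1)}+\delta^{(q-1)/(p-1)}\bigr)<\delta
\]
provided $\delta<\delta_0$ for some $\delta_0$ depending only on $\tilde a$ and on the embedding constants. Hence $Au\in P^\delta$, and the inclusion $A(\partial Q^\delta)\subset Q^\delta$ follows from the oddness of $A$. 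The principal obstacle is handling the penalty term with a unified sign, which is non-trivial only because the definition of $A$ differs between $p\geq 2$ and $1<p<2$; the case analysis above reconciles the two. A secondary technical point is the comparison $\|u^-\|_{W^{1,p}}\asymp dist(u,P)$, which is a standard fixture of the sign-changing-critical-point framework of \cite{llw-jde}.
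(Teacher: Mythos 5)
Your overall strategy coincides with the paper's: test \eqref{3-1} against the negative part of $v=Au$, discard the penalty contribution after checking its sign in both regimes $z=u$ ($p\geq 2$) and $z=v$ ($1<p<2$), use the sign of $f_\epsilon(x,\cdot)$ to reduce to the negative part of $u$, and close with H\"older, Sobolev and the superlinearity of the exponents $\frac{p^*-1}{p-1},\frac{q-1}{p-1}>1$. All of that matches \eqref{3-11}.

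There is, however, one genuine gap: the "secondary technical point" you invoke, namely $\|u^-\|_{W^{1,p}}\asymp dist_{W^{1,p}}(u,P)$, is false. Only the inequality $dist_{W^{1,p}}(u,P)\leq\|u^-\|_{W^{1,p}}$ holds (take $w=u^+$); the reverse bound fails because $\|\nabla u^-\|_{L^p}^p=\int_{\{u<0\}}|\nabla u|^p$ cannot be controlled by $\|\nabla(u-w)\|_{L^p}$ for $w\in P$ — a nonnegative $w$ may have large gradient on the set where $u$ is slightly negative (e.g.\ $u=w-\varepsilon\phi$ with $w$ vanishing to first order gives $\|u^-\|_{W^{1,p}}\sim\varepsilon^{1/p}\gg\varepsilon\sim\|u-w\|_{W^{1,p}}$). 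This non-equivalence is precisely the reason the framework of \cite{llw-jde} works with $dist_{W^{1,p}}(\cdot,P)$ rather than with $\|\cdot^-\|_{W^{1,p}}$. Your argument uses the false direction when you pass from $u\in\partial P^\delta$ to $\|u^-\|_{W^{1,p}}\leq C_0\delta$ and then apply the Sobolev embedding to $u^-$. The repair is the route the paper takes in \eqref{3-11}: bound the H\"older factors directly by $\|u^-\|_{L^{p^*}}=dist_{L^{p^*}}(u,P)\leq C\,dist_{W^{1,p}}(u,P)=C\delta$ and $\|u^-\|_{L^q}=dist_{L^q}(u,P)\leq C\,dist_{W^{1,p}}(u,P)$, i.e.\ apply the Sobolev embedding to $u-w$ for competitors $w\in P$ (using that the nonnegative cone of $W^{1,p}$ sits inside that of $L^{p^*}$ and $L^q$), never to $u^-$ itself. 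With that substitution the rest of your argument, including the treatment of the penalty term and the deduction of $A(\partial Q^\delta)\subset Q^\delta$ by oddness, goes through as in the paper.
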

\begin{proof}
For $u\in W^{1,p}(\mathbb{R}^N)$, $u\in \partial (P^\delta)$, denote $v=Au$. Take $\varphi=v^-$ as a test function in \eqref{3-1}, then
\begin{equation}\label{3-11}
\begin{aligned}
&(dist_{W^{1,p}}(v,P))^{p-1}\|v^-\|\leq\|v^-\|^p\leq C\int_{\mathbb{R}^N}|\nabla v^-|^p+V(\epsilon x)|v^-|^p\\
=&C\int_{\mathbb{R}^N}|\nabla v|^{p-2}\nabla v\nabla v^-+V(\epsilon x)|v|^{p-2}vv^-\\
=&-C\left(\int_{\mathbb{R}^N}\xi_\epsilon|u|^p-1\right)_+^{\beta-1}\int_{\mathbb{R}^N}\xi_\epsilon|z|^{p-2}vv^-+C\int_{\mathbb{R}^N}f_\epsilon(x,u)v^-\\
\leq&C\int_{\mathbb{R}^N}f_\epsilon(x,u)v^-\\
\leq&C\int_{\mathbb{R}^N}f_\epsilon(x,u^-)v^-\\
\leq&\left(\int_{\mathbb{R}^N}|u^-|^{p^*}\right)^{\frac{p^*-1}{p^*}}\left(\int_{\mathbb{R}^N}|v^-|^{p^*}\right)^{\frac{1}{p^*}}+\mu\left(\int_{\mathbb{R}^N}|u^-|^q\right)^{\frac{q-1}{q}}\left(\int_{\mathbb{R}^N}|v^-|^q\right)^{\frac{1}{q}}\\
=&\left(\left(dist_{L^{p^*}}(u,P)\right)^{p^*-1}+\mu\left(dist_{L^q}(u,P)\right)^{q-1}\right)\|v^-\|\\
\leq&\left(\left(dist_{W^{1,p}}(u,P)\right)^{p^*-1}+\mu\left(dist_{W^{1,p}}(u,P)\right)^{q-1}\right)\|v^-\|,
\end{aligned}
\end{equation}
thus $A(\partial P^\delta)\subset P^\delta$. In the same way, we can prove $A(\partial Q^\delta)\subset Q^\delta$.
\end{proof}
\begin{lemma}\label{le3-3}
Given $c_0, b_0>0$, there exists $b=b(c_0,b_0)$ such that if $\|DI_\epsilon(u)\|\geq b_0$, $|I_\epsilon(u)|\leq c_0$, then
$$
\langle DI_\epsilon(u), u-Au\rangle\geq b\|u-Au\|.
$$
\end{lemma}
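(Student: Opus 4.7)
The plan is to express $\langle DI_\epsilon(u),u-Au\rangle$ as a sum of monotone forms via the defining equation of $Au$, derive a coercive lower bound, and then use the two hypotheses to bound $\|u-Au\|$ from below. Write $v=Au$ and $w=u-v$. Testing (3.1) against $\varphi=w$ and subtracting from the formula for $\langle DI_\epsilon(u),w\rangle$ obtained by taking $\varphi=w$ in (1.8), the $f_\epsilon(x,u)w$ terms cancel and I obtain
\[
\langle DI_\epsilon(u),w\rangle = T_\nabla + T_V + T_\xi,
\]
with $T_\nabla=\int(|\nabla u|^{p-2}\nabla u-|\nabla v|^{p-2}\nabla v)\cdot\nabla w$, $T_V=\int V(\epsilon x)(|u|^{p-2}u-|v|^{p-2}v)w$, and $T_\xi=\lambda_u\int\xi_\epsilon(|u|^{p-2}u-|z|^{p-2}v)w$, where $\lambda_u=(\int\xi_\epsilon|u|^p-1)_+^{\beta-1}$.

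The crucial observation is that $T_\xi\ge 0$ by the very choice of $z$ in the definition of $A$: for $p\ge 2$ one has $z=u$ and $T_\xi=\lambda_u\int\xi_\epsilon|u|^{p-2}w^2\ge 0$, while for $1<p<2$ one has $z=v$ and the integrand is pointwise nonnegative by monotonicity of $t\mapsto|t|^{p-2}t$. Applying the elementary inequalities (2.4)-(2.6) to $T_\nabla$ and $T_V$, exactly as in the proof of Lemma 2.1, then yields the coercive bound
\[
\langle DI_\epsilon(u),w\rangle \ge
\begin{cases}
c\|w\|^p, & p\ge 2,\\
c\|w\|^2(\|u\|+\|v\|+1)^{-(2-p)}, & 1<p<2.
\end{cases}
\]

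Next I would estimate $\|DI_\epsilon(u)\|$ from above. Using the same subtraction trick against an arbitrary $\varphi$ with $\|\varphi\|=1$, together with the pointwise bound $||a|^{p-2}a-|b|^{p-2}b|\le C(|a|+|b|)^{p-2}|a-b|$ (valid for $p\ge 2$, with a parallel estimate for $1<p<2$) and Hölder's inequality, one obtains
\[
\|DI_\epsilon(u)\| \le K(\epsilon,\|u\|,\|v\|)\,\|w\|,
\]
where $K$ is continuous and nondecreasing in its last two arguments. Combined with the hypothesis $\|DI_\epsilon(u)\|\ge b_0$, this gives $\|w\|\ge b_0/K$, and hence $\langle DI_\epsilon(u),w\rangle/\|w\|\ge c\|w\|^{p-1}\ge c(b_0/K)^{p-1}$ for $p\ge 2$, with the analogous computation for $1<p<2$.

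The main obstacle is bounding $\|u\|,\|v\|$ by a constant depending only on $c_0,b_0,\epsilon$, so that $K$ is controlled. Repeating the Nehari-type manipulation from (2.1) for a single $u$ gives $c\|u\|^p \le c_0+C+r^{-1}\|DI_\epsilon(u)\|\|u\|$, which does not close by itself. I would apply Young's inequality to absorb $\|DI_\epsilon(u)\|\|u\|$ into $\tfrac{c}{2}\|u\|^p+C\|DI_\epsilon(u)\|^{p/(p-1)}$, then substitute the upper estimate $\|DI_\epsilon(u)\|\le K\|w\|$ together with the sharper a priori bound $\|v\|\le C_\epsilon\|u\|^{(r-1)/(p-1)}$ obtained by refining (3.6) with the truncated growth $|f_\epsilon(x,u)|\le C_\epsilon|u|^{r-1}$ (from property (5) of Proposition 2.1 and $|m_\epsilon|\le 2\epsilon^{-1}$); the hypothesis $p\beta<r$ is what prevents the penalization $Q_\epsilon$ from interfering with this bootstrap. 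Closing the iteration yields $\|u\|\le M(c_0,b_0,\epsilon)$ and hence $\|v\|\le M'$, so setting $b=c(b_0/K(\epsilon,M,M'))^{p-1}$ delivers the required constant.
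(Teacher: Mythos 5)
Your overall architecture coincides with the paper's: decompose $\langle DI_\epsilon(u),w\rangle$ into the three monotone forms using the defining equation of $Au$, note $T_\xi\geq 0$ by the choice of $z$, get the coercive lower bound from \eqref{2-4}--\eqref{2-6}, bound $\|DI_\epsilon(u)\|$ above by a quantity vanishing as $\|w\|\to 0$, and conclude $\|w\|\geq a(c_0,b_0)$. The first two steps are correct and are exactly \eqref{3-12}--\eqref{3-13}.

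The genuine gap is in the step you yourself flag as ``the main obstacle'': the a priori bound on $\|u\|$, $\|v\|$ and $\lambda_u$. Your bootstrap starts from $c\|u\|^p\leq c_0+C+r^{-1}\|DI_\epsilon(u)\|\,\|u\|$ and then substitutes $\|DI_\epsilon(u)\|\leq K(\epsilon,\|u\|,\|v\|)\|w\|$ with $\|w\|\leq\|u\|+\|v\|$ and $\|v\|\leq C_\epsilon\|u\|^{(r-1)/(p-1)}$. Since $(r-1)/(p-1)>1$, the dominant term on the right is of order $\|v\|^{p-1}\|u\|\sim\|u\|^{r}$ with $r>p$, so Young's inequality cannot absorb it into $\|u\|^p$ and the iteration does not close; moreover $K$ also contains $\lambda_u$, which you never bound. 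The paper avoids this circularity entirely: in \eqref{3-20}--\eqref{3-23} it tests the defining equation \eqref{3-1} of $v$ against $u$ to replace $\int f_\epsilon(x,u)u$ \emph{exactly} (rather than estimating $\langle DI_\epsilon(u),u\rangle$ by $\|DI_\epsilon(u)\|\,\|u\|$), and after absorbing the error with \eqref{3-22} obtains $\|u\|^p+\lambda_\epsilon^{\beta/(\beta-1)}\leq CI_\epsilon(u)+C\|u-v\|^{p\beta}+C$ --- a bound depending only on $I_\epsilon(u)\leq c_0$ and on $\|u-v\|$, with no derivative norm to absorb. Feeding this into the upper estimate \eqref{3-19} gives \eqref{3-25}, whose right side tends to $0$ as $\|u-v\|\to 0$, which is what forces $\|u-v\|\geq a$. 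Two smaller points: for $1<p<2$ the gradient term yields $\|w\|^{p-1}$, not $\|w\|$, in the upper bound (harmless but your stated inequality is false as written); and in the final step for $1<p<2$ the denominators $\|u\|^{2-p}+\|v\|^{2-p}$ must be controlled by powers of $\|u-v\|$ compatible with a linear lower bound, which is precisely where the hypothesis $\beta\leq\frac{1}{2-p}$ enters --- your sketch never uses it.
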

\begin{proof}
Denote $v=Au$, for $\varphi\in W^{1,p}(\mathbb{R}^N)$
\begin{equation}\label{3-12}
\begin{aligned}
\langle DI_\epsilon(u), \varphi\rangle
=&\int_{\mathbb{R}^N}|\nabla u|^{p-2}\nabla u\nabla \varphi+V(\epsilon x)|u|^{p-2}u\varphi+\left(\int_{\mathbb{R}^N}\xi_\epsilon |u|^p-1\right)_+^{\beta-1}\int_{\mathbb{R}^N}\xi_\epsilon(x)|u|^{p-2}u\varphi\\
&-\int_{\mathbb{R}^N}f_\epsilon(x,u)\varphi\\
=&\int_{\mathbb{R}^N}\left(|\nabla u|^{p-2}\nabla u-|\nabla v|^{p-2}\nabla v\right)\nabla \varphi+V(\epsilon x)\left(|u|^{p-2}u-|v|^{p-2}v\right)\varphi\\
&+\left(\int_{\mathbb{R}^N}\xi_\epsilon |u|^p-1\right)_+^{\beta-1}\int_{\mathbb{R}^N}\xi_\epsilon(x)\left(|u|^{p-2}u-|z|^{p-2}v\right)\varphi.
\end{aligned}
\end{equation}
Take $\varphi=u-v$, by \eqref{2-4} and reverse H\"older's inequality, we have that
\begin{equation}\label{3-13}
\begin{aligned}
\langle DI_\epsilon(u), u-v\rangle
\geq\left\{\begin{array}{ll}
C\|u-v\|^2\left(\|u\|^{2-p}+\|v\|^{2-p}\right)^{-1},&\hbox{if } 1<p<2;\\
C\|u-v\|^p,&\hbox{if } p\geq 2.
\end{array}\right.
\end{aligned}
\end{equation}
Moreover, denote
$$
\lambda_\epsilon=\left(\int_{\mathbb{R}^N}\xi_\epsilon |u|^p-1\right)_+^{\beta-1},
$$
we see that
\begin{equation}\label{ad3-13}
\begin{aligned}
\langle DI_\epsilon, u-v\rangle
\geq\left\{\begin{array}{ll}
\lambda_\epsilon\int_{\mathbb{R}^N}\xi_\epsilon(x)\left(|u|^{p-2}u-|v|^{p-2}v\right)(u-v),&\hbox{if } 1<p<2;\\
\lambda_\epsilon\int_{\mathbb{R}^N}\xi_\epsilon(x)|u|^{p-2}(u-v)^2,&\hbox{if } p\geq 2.
\end{array}\right.
\end{aligned}
\end{equation}

On the other hand, applying the following elementary inequalities in \cite{zll1}, that is, for $\xi, \eta\in\mathbb{R}^N$
\begin{equation}\label{3-14}
\begin{aligned}
\left\{\begin{array}{ll}
\left.|| \xi\right|^{p-2} \xi-|\eta|^{p-2} \eta|\leq c| \xi-\left.\eta\right|^{p-1},&\hbox{if } 1<p<2;\\
\|\left.\xi\right|^{p-2} \xi-|\eta|^{p-2} \eta\left|\leq c\left(|\xi|^{p-2}+|\eta|^{p-2}\right)\right| \xi-\eta |, &\hbox{if } p\geq2.
\end{array}\right.
\end{aligned}
\end{equation}
If $1<p<2$, we have
\begin{equation}\label{3-15}
\begin{aligned}
\langle DI_\epsilon, \varphi\rangle&\leq\int_{\mathbb{R}^N}|\nabla u-\nabla v|^{p-1}|\nabla\varphi|+V(\epsilon x)|u-v|^{p-1}|\varphi|\\
&+\left(\int_{\mathbb{R}^N}\xi_\epsilon |u|^p-1\right)_+^{\beta-1}\int_{\mathbb{R}^N}\xi_\epsilon(x)\left(|u|^{p-2}u-|v|^{p-2}v\right)\varphi,
\end{aligned}
\end{equation}
considering \eqref{ad3-13} and \eqref{3-14}, we have
\begin{equation}\label{3-16}
\begin{aligned}
&\lambda_\epsilon\int_{\mathbb{R}^N}\xi_\epsilon(x)\left(|u|^{p-2}u-|v|^{p-2}v\right)\varphi\\
\leq & \lambda_\epsilon\left(\int_{\mathbb{R}^N}\xi_\epsilon\frac{\left||u|^{p-2}u-|v|^{p-2}v\right|}{|u-v|^{p-1}}|\varphi|^p\right)^{\frac{1}{p}}\left(\int_{\mathbb{R}^N}\xi_\epsilon\left(|u|^{p-2}u-|v|^{p-2}v\right)(u-v)\right)^{1-\frac{1}{p}}\\
\leq&C\lambda_\epsilon\left(\int_{\mathbb{R}^N}|\varphi|^p\right)^{\frac{1}{p}}\left(\langle DI_\epsilon(u),u-v\rangle\lambda_\epsilon^{-1}\right)^{1-\frac{1}{p}}\\
\leq&C\|\varphi\|\langle DI_\epsilon(u),u-v\rangle^{1-\frac{1}{p}}\lambda_\epsilon^{\frac{1}{p}}.
\end{aligned}
\end{equation}
While for $p\geq 2$, we have that
\begin{equation}\label{3-17}
\begin{aligned}
\langle DI_\epsilon, \varphi\rangle&\leq\int_{\mathbb{R}^N}(|\nabla u|^{p-2}+|\nabla v|^{p-2})|\nabla u-\nabla v||\nabla\varphi|+V(\epsilon x)(|u|^{p-2}+|v|^{p-2})|u-v||\varphi|\\
&+\left(\int_{\mathbb{R}^N}\xi_\epsilon |u|^p-1\right)_+^{\beta-1}\int_{\mathbb{R}^N}\xi_\epsilon(x)|u|^{p-2}(u-v)\varphi,
\end{aligned}
\end{equation}
and as before, we have that
\begin{equation}\label{3-18}
\begin{aligned}
&\lambda_\epsilon\int_{\mathbb{R}^N}\xi_\epsilon\left(|u|^{p-2}u-|u|^{p-2}v\right)\varphi\\
\leq& \lambda_\epsilon\left(\int_{\mathbb{R}^N}\xi_\epsilon|u|^{p-2}|u-v|^2\right)^{\frac{1}{2}}\left(\int_{\mathbb{R}^N}\xi_\epsilon|u|^{p-2}|\varphi|^2\right)^{\frac{1}{2}}\\
\leq&C\lambda_\epsilon\left(\lambda_\epsilon^{-1}\langle DI_\epsilon(u),u-v\rangle\right)^{\frac{1}{2}}\|u\|_p^{\frac{p-2}{2}}\|\varphi\|_p\\
=&C\lambda_\epsilon^{\frac{1}{2}}\left(\langle DI_\epsilon(u),u-v\rangle\right)^{\frac{1}{2}}\|u\|_p^{\frac{p-2}{2}}\|\varphi\|_p.
\end{aligned}
\end{equation}
Combining \eqref{3-15}, \eqref{3-16}, \eqref{3-17} and \eqref{3-18}, we obtain that
\begin{equation}\label{3-19}
\begin{aligned}
\|DI_\epsilon(u)\|\leq
\left\{\begin{array}{ll}
\|u-v\|^{p-1}+C\lambda_\epsilon^{\frac{1}{p}}\langle DI_\epsilon(u),u-v\rangle^{1-\frac{1}{p}},&\hbox{if } 1<p<2;\\
\|u-v\|\left(\|u\|^{p-2}+\|v\|^{p-2}\right)+C\lambda_\epsilon^{\frac{1}{2}}\langle DI_\epsilon(u),u-v\rangle^{\frac{1}{2}}\|u\|^{\frac{p-2}{2}},&\hbox{if }p\geq 2.
\end{array}\right.
\end{aligned}
\end{equation}
Now we estimate $\|u\|$ and $\lambda_u$.
\begin{equation}\label{3-20}
\begin{aligned}
&I_\epsilon(u)+\frac{1}{r}\int_{\mathbb{R}^N}|\nabla v|^{p-2}\nabla v\nabla u+V(\epsilon x)|v|^{p-2}vu-\frac{1}{r}\int_{\mathbb{R}^N}|\nabla u|^{p}+V(\epsilon x)|u|^{p}\\
=&\left(\frac{1}{p}-\frac{1}{r}\right)\int_{\mathbb{R}^N}|\nabla u|^p+V(\epsilon x)|u|^p+\frac{1}{p\beta}\lambda_\epsilon^{\frac{\beta}{\beta-1}}-\frac{1}{r}\lambda_\epsilon\int_{\mathbb{R}^N}\xi_\epsilon|z|^{p-2}vu\\
&-\int_{\mathbb{R}^N}F_\epsilon(x,u)+\frac{1}{r}\int_{\mathbb{R}^N}f_\epsilon(x,u)u\\
\geq&C\|u\|^p+\left(\frac{1}{p\beta}-\frac{1}{r}\right)\lambda_\epsilon^{\frac{\beta}{\beta-1}}+\frac{1}{r}\lambda_\epsilon\left(\int_{\mathbb{R}^N}\xi_\epsilon|u|^p-1-\int_{\mathbb{R}^N}\xi_\epsilon|z|^{p-2}vu\right)\\
=&C\|u\|^p+\left(\frac{1}{p\beta}-\frac{1}{r}\right)\lambda_\epsilon^{\frac{\beta}{\beta-1}}-\frac{1}{r}\lambda_\epsilon+\frac{1}{r}\lambda_\epsilon\int_{\mathbb{R}^N}\xi_\epsilon u(|u|^{p-2}u-|z|^{p-2}v),
\end{aligned}
\end{equation}
thus
\begin{equation}\label{3-21}
\begin{aligned}
&C\|u\|^p+C\lambda_\epsilon^{\frac{\beta}{\beta-1}}\\
\leq&I_\epsilon(u)+\frac{1}{r}\int_{\mathbb{R}^N}\left(|\nabla v|^{p-2}\nabla v-|\nabla u|^{p-2}\nabla u\right)\nabla u+\frac{1}{r}\int_{\mathbb{R}^N}V(\epsilon x)\left(|v|^{p-2}v-|u|^{p-2}u\right)u\\
&+\frac{1}{r}\lambda_\epsilon-\frac{1}{r}\lambda_\epsilon\int_{\mathbb{R}^N}\xi_\epsilon u(|u|^{p-2}u-|z|^{p-2}v)\\
\leq&I_\epsilon(u)+C\|u-v\|^p+\varepsilon\|u\|^p+C\lambda_\epsilon+C\lambda_\epsilon\left(C\|u-v\|_p^p+\varepsilon\int_{\mathbb{R}^N}\xi_\epsilon|u|^p\right)\\
\leq&I_\epsilon(u)+C\|u-v\|^{p\beta}+\varepsilon\|u\|^p+\varepsilon\lambda_\epsilon^{\frac{\beta}{\beta-1}}+C,
\end{aligned}
\end{equation}
where we have utilized an inequality in \cite{zll1}, that is, for any $\varepsilon>0$ there exists $C_{\varepsilon}>0$ such that for $\xi, \eta \in \mathbb{R}^{N}$
\begin{equation}\label{3-22}
\begin{aligned}
\left||\xi|^{p-2} \xi-|\eta|^{p-2} \eta\right|| \xi | & \leq C\left(|\xi|^{p-1}|\xi-\eta|+|\xi||\xi-\eta|^{p-1}\right) \\
& \leq C_{\varepsilon}|\xi-\eta|^{p}+\varepsilon|\xi|^{p}.
\end{aligned}
\end{equation}
Therefore
\begin{equation}\label{3-23}
\|u\|^p+\lambda_\epsilon^{\frac{\beta}{\beta-1}}\leq CI_\epsilon(u)+C\|u-v\|^{p\beta}+C.
\end{equation}
From \eqref{3-23}, we have
\begin{equation}\label{3-24}
\begin{aligned}
\|u\|\leq&CI_\epsilon^{\frac{1}{p}}(u)+C\|u-v\|^\beta+C,\\
\|v\|\leq&\|u\|+\|u-v\|\leq CI_\epsilon^{\frac{1}{p}}(u)+C\|u-v\|^\beta+C,\\
\lambda_\epsilon\leq&I_\epsilon(u)^{\frac{\beta-1}{\beta}}+C\|u-v\|^{p(\beta-1)}+C.
\end{aligned}
\end{equation}
Combining with \eqref{3-19}, we have that
\begin{equation}\label{3-25}
\begin{aligned}
\|DI_\epsilon(u)\|\leq
\left\{\begin{array}{ll}
\|u-v\|^{p-1}+C\left(I_\epsilon(u)^{\frac{\beta-1}{\beta}}+\|u-v\|^{p(\beta-1)}+1\right)\|u-v\|^{p-1},&\hbox{if } 1<p<2;\\
C\left(I_\epsilon(u)^{\frac{\beta-1}{\beta}+\frac{p-2}{p}}+\|u-v\|^{p\beta\left(\frac{\beta-1}{\beta}+\frac{p-2}{p}\right)}+1\right)^2\|u-v\|,&\hbox{if } p\geq 2.
\end{array}\right.
\end{aligned}
\end{equation}
Thus we obtain there exists $a=a(c_0,b_0)$ such that if $\|DI_\epsilon(u)\|\geq b_0$, $|I_\epsilon(u)|\leq c_0$, $\|u-v\|\geq a$. By \eqref{3-13}, \eqref{3-24} and the choice of $\beta$, we deduce the desired.
\end{proof}
\begin{lemma}\label{le3-4}
$\Gamma_j$ is nonempty.
\end{lemma}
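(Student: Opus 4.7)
The plan is to exhibit an explicit element of $\Gamma_j$, following the construction used for sign-changing minimax classes in \cite{llw-jde,blw,clw}, built from bump functions supported in disjoint regions of $\mathcal{M}_\epsilon$ — where $\xi_\epsilon$ vanishes, so the penalty term $Q_\epsilon$ plays no role.

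Because the dilated set $\mathcal{M}_\epsilon=\{x:\epsilon x\in\mathcal{M}\}$ is arbitrarily large for small $\epsilon$, I first fix $2j$ pairwise disjoint open balls $B_1^{\pm},\ldots,B_j^{\pm}\subset\mathcal{M}_\epsilon$ and pick non-negative cut-offs $\phi_i^{\pm}\in C_0^\infty(B_i^{\pm})$ normalized so that $\|\phi_i^{\pm}\|=1$. Set $\psi_i=\phi_i^+-\phi_i^-$ and $V_j=\mathrm{span}\{\psi_1,\ldots,\psi_j\}$. Two identities drive the construction. First, since $\xi_\epsilon\equiv 0$ on $\mathcal{M}_\epsilon$ one has $Q_\epsilon(u)\equiv 0$ on the whole of $V_j$. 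Second, for $u_t=\sum_i t_i\psi_i$, a pointwise computation on each ball gives
$$
\|u_t^{+}\|=\|u_t^{-}\|=\|t\|_{\ell^p},
$$
so for a suitable radius $\rho$ the sphere $\{u_t:\|t\|_{\ell^p}=\rho\}$ sits on $\Sigma$, and $\{u_t:\|t\|_{\ell^p}\geq\rho\}$ lies outside $\tilde P\cup\tilde Q$. Moreover, parts (4)--(5) of Proposition~\ref{prop2-1} give $F_\epsilon(x,u_t)\gtrsim |u_t|^{r}$ on $\operatorname{supp}(u_t)$ with $r>p$, whence $I_\epsilon(u_t)\to -\infty$ as $\|t\|_{\ell^p}\to\infty$; fix $R>\rho$ large enough that $I_\epsilon(u_t)<0$ uniformly on $\{\|t\|_{\ell^p}=R\}$.

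Now define the candidate set
$$
E=\{u_t:\rho\leq\|t\|_{\ell^p}\leq R\}.
$$
Then $E$ is compact and symmetric, and the odd linear identification $u_t\mapsto t$ sends $E$ homeomorphically onto the $\ell^p$-annular shell in $\mathbb{R}^j$, which equivariantly deformation-retracts onto the $(j-1)$-sphere; hence $\gamma(E)=j$. For any $\eta\in\Lambda$, the outer sphere $\{\|t\|_{\ell^p}=R\}$ lies in $\{I_\epsilon<0\}$ and is fixed pointwise by $\eta$, whereas the inner sphere lies on $\Sigma$ and is mapped by $\eta$ into $\overline{\tilde P}\cap\overline{\tilde Q}$ by continuity together with $\eta(\tilde P)\subset\tilde P$ and $\eta(\tilde Q)\subset\tilde Q$. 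The required inequality $\gamma(E\cap\eta^{-1}(\Sigma))\geq j$ then follows from a Borsuk--Ulam / linking argument in the $\mathbb{Z}_2$-equivariant setting: because $\eta$ fixes the outer boundary of $E$ and respects the antipodal pairing $\tilde Q=-\tilde P$, the preimage set $E\cap\eta^{-1}(\Sigma)$ cannot be $\mathbb{Z}_2$-equivariantly deformed to a set of genus below that of the inner sphere.

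The routine steps are the choice of the $\phi_i^{\pm}$, the vanishing of $Q_\epsilon$ on $V_j$, and the energy estimate producing $R$. The main obstacle is the last genus inequality: it rests on the specific $\mathbb{Z}_2$-equivariant topology of $(E,\Sigma,\eta)$ and the combination of the antipodal identification $\tilde Q=-\tilde P$ with the cone-preservation built into $\Lambda$. This topological step is carried out as in \cite{llw-jde}.
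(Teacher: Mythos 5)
Your construction is essentially the paper's: a compact, symmetric, finite-dimensional family of $C_0^\infty(\mathcal{M}_\epsilon)$-functions on which $Q_\epsilon$ vanishes, scaled so that the energy is negative on the outer boundary, with the key genus estimate $\gamma(E\cap\eta^{-1}(\Sigma))\ge j$ deferred to \cite{llw-jde} exactly as the paper does (the paper simply takes the image of the full ball, $\varphi_n(B_n)=\{R_n\sum_i t_ie_i:|t|\le 1\}$, for arbitrary linearly independent $e_i\in C_0^\infty(B)$ with $B\subset M_\epsilon$, rather than your annulus of signed bumps). The one inaccurate point is the claim that the inner sphere $\{u_t:\|t\|_{\ell^p}=\rho\}$ ``sits on $\Sigma$'' --- the $W^{1,p}$-distance to $P$ is direction-dependent, so no single $\rho$ achieves this --- but it is harmless: choose $\rho$ small enough that $\varphi(\{\|t\|_{\ell^p}\le\rho\})\subset\tilde P\cap\tilde Q$, an open set disjoint from $\Sigma$ and invariant under every $\eta\in\Lambda$, so that deleting the inner ball removes nothing from $E\cap\eta^{-1}(\Sigma)$.
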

\begin{proof}
Assume $B=\{x\in\mathbb{R}^N\left||x|\leq r_0\right.\}\subset M_\epsilon$. Let $\{e_n\}_{n=1}^\infty$ be a family of linear independent functions in $C_0^\infty(B)$. Then there exists an increasing sequence of $\{R_n\}$ such that
$$
J_0(u)<0,~~\forall u\in E_n, ~~\|u\|\geq R_n,
$$
where $E_n=span\{e_1, e_2, \cdots, e_n\}$ and
$$
J_0(u)=\frac{1}{p}\int_{\mathbb{R}^N}|\nabla u|^p+\tilde{b}|u|^p-\int_{\mathbb{R}^N}F_1(x,u).
$$
Denote $B_n$ the unit ball in $\mathbb{R}^n$, define $\varphi_n\in C(B_n,C_0^\infty(B))$ as
$$
\varphi_n=R_n\sum\limits_{i=1}^nt_ie_i,~~~~t=(t_1,\cdots,t_n)\in B_1.
$$
Let
$$
\Gamma_j=\{E|E\subset W^{1,p}(\mathbb{R}^N), E \hbox{ compact}, -E=E, \gamma(E\cap\eta^{-1}(\Sigma))\geq j \hbox{ for } \eta \in\Lambda\},
$$
$$
\Lambda=\{\eta|\eta\in C(W^{1,p},W^{1,p}), \eta\hbox{ odd}, \eta(P^\delta)\subset P^\delta, \eta(Q^\delta)\subset Q^\delta, \eta(x)=x \hbox{ if } I_\epsilon<0\},
$$
then $\varphi_n(B_n)\subset \Gamma_n$, we refer \cite{llw-jde} for the proof.
\end{proof}
Having verified all the assumptions of Theorem \ref{th3-1}, we have the following existence theorem.
\begin{theorem}\label{th3-2}
Assume $(V_1)$, $(V_2)$. Then the functional $I_\epsilon$, $\epsilon\in(0,1]$, has infinitely many sign-changing critical points, the corresponding critical values are defined as
$$
c_j(\epsilon)=\inf\limits_{A\in\Gamma_j}\sup\limits_{x\in A\setminus O}f(x),~~~~~j=1,2,\cdots.
$$
Moreover
\begin{enumerate}
\item there exists $m_j$, $j=1,\cdots$, independent of $\epsilon$ such that
\begin{equation}\label{3-26}
c_j(\epsilon)\leq m_j,~~~~j=1, 2,\cdots.
\end{equation}
\item If $c_j(\epsilon)=\cdots=c_{j+k}(\epsilon)=c$, then $\gamma(K^*_c)\geq k+1$.
\end{enumerate}
\end{theorem}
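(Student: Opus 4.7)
My plan is to deduce Theorem \ref{th3-2} as a direct application of the abstract critical point Theorem \ref{th3-1}, since the hypotheses have essentially all been established: Lemma \ref{le2-1} gives the $(PS)$ condition $(I_1)$; the estimate \eqref{ad3-1} gives $c^{*}>0$, i.e.\ $(I_2)$; Lemmas \ref{le3-1}, \ref{le3-2}, \ref{le3-3} furnish the odd continuous operator $A$ with the angle condition $(A_1)$ and invariance condition $(A_2)$ (with $\tilde P=P^\delta$, $\tilde Q=Q^\delta$); and Lemma \ref{le3-4} supplies a nonempty class $\Gamma_j$. Therefore invoking Theorem \ref{th3-1} yields at once claim (1) (the critical values $c_j(\epsilon)$ are achieved by sign-changing critical points, i.e.\ $K^*_{c_j(\epsilon)}\neq\emptyset$) and claim (3) (the multiplicity statement $\gamma(K^*_c)\geq k+1$ when critical values collapse).

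The substantive new content is claim (2), the $\epsilon$-uniform bound $c_j(\epsilon)\leq m_j$. I will obtain this by testing the minimax over the explicit element $\varphi_n(B_n)\in\Gamma_n$ constructed in Lemma \ref{le3-4}, whose image lies in a fixed finite-dimensional space $E_n\subset C_0^\infty(B)$ with $B=\{|x|\leq r_0\}$. Since $0\in\mathcal{A}\subset\mathcal{M}$, I can fix $r_0$ small enough that $B\subset\mathcal{M}$; then $\epsilon B\subset \mathcal{M}$ for every $\epsilon\in(0,1]$, which is to say $B\subset\mathcal{M}_\epsilon$ uniformly in $\epsilon$. For any $u$ supported in $B$ this has two consequences: first, $\xi_\epsilon\equiv 0$ on $\mathrm{supp}\,u$ so that the penalty term $Q_\epsilon(u)=0$; and second, $\mathrm{dist}(\epsilon x,M)=0$ on $\mathrm{supp}\,u$, so the truncation reduces to $m_\epsilon(x,t)=\int_0^t\varphi(\epsilon\tau)\,d\tau$.

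The key pointwise comparison I want is $F_\epsilon(x,u)\geq F_1(x,u)$ for all $\epsilon\in(0,1]$ and $x\in B$. Because $\varphi\equiv 1$ on $[-1,1]$, $m_\epsilon$ saturates only at $|t|=1/\epsilon\geq 1$, which is later than $m_1$ saturates (at $|t|=1$). Combined with $\varphi\geq 0$ and monotonicity, this yields $|m_\epsilon(x,t)|\geq|m_1(x,t)|$ pointwise, and since $F_\sigma$ is increasing in $|m_\sigma|$ (the exponents $p^*-r$ and $q-r$ are positive), the asserted inequality follows. Using also $V(\epsilon x)\leq\tilde b$ from $(V_1)$, I obtain
\begin{equation*}
I_\epsilon(u)\leq \frac{1}{p}\int_{\mathbb{R}^N}|\nabla u|^p+\tilde b|u|^p-\int_{\mathbb{R}^N}F_1(x,u)=J_0(u)\qquad\text{for every }u\in\varphi_n(B_n),\ \epsilon\in(0,1].
\end{equation*}
Since $\varphi_n(B_n)$ is a compact subset of the finite-dimensional space $E_n$, $J_0$ is bounded on it and I may set $m_n:=\sup_{u\in\varphi_n(B_n)}J_0(u)<\infty$, independent of $\epsilon$, giving $c_n(\epsilon)\leq m_n$.

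The only delicate points in this plan are verifying carefully that $\varphi_n(B_n)$ genuinely belongs to $\Gamma_n$ (which is invoked from Lemma \ref{le3-4} and ultimately from \cite{llw-jde}) and confirming the pointwise comparison $F_\epsilon\geq F_1$ on $B$; once these are in place the rest is a straightforward assembly of Theorem \ref{th3-1} with the uniform energy bound above. I anticipate the monotonicity comparison of $m_\epsilon$ in $\epsilon$ to be the one spot where one must be careful about the behaviour of $\varphi$ on $[1,2]$, but since $|m_\epsilon|$ is bounded below by $\min\{|t|,\epsilon^{-1}\}$ and above by $\min\{|t|,2\epsilon^{-1}\}$ via Proposition \ref{prop2-1}(4), the inequality $|m_\epsilon|\geq|m_1|$ for $\epsilon\leq 1$ drops out directly.
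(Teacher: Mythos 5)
Your proposal is correct and follows essentially the same route as the paper: the existence and multiplicity statements are exactly the application of Theorem \ref{th3-1} via Lemmas \ref{le2-1}--\ref{le3-4}, and the uniform bound \eqref{3-26} is obtained, as in the paper, by testing the minimax on $H_n=\varphi_n(B_n)$ supported in $B\subset\mathcal{M}_\epsilon$, where $Q_\epsilon$ vanishes and $I_\epsilon\leq J_0$. Your pointwise comparison $F_\epsilon\geq F_1$ on $B$ (via the monotonicity of $\varphi$ on $[0,\infty)$, which gives $|m_\epsilon|\geq|m_1|$ for $\epsilon\leq1$ — the correct justification, rather than the cruder two-sided bounds of Proposition \ref{prop2-1}(4), which only suffice for $\epsilon\leq 1/2$) is a detail the paper leaves implicit.
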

\begin{proof}
It remains to verify \eqref{3-26}. $H_n:=\varphi_n(B_n)\subset \Gamma_n$. For $t\in B_n$, $u=\varphi_n(t)$, $\left(\int_{\mathbb{R}^N}\xi_\epsilon|u|^p-1\right)_+=0$, we have
$$
I_\epsilon(u)\leq J_0(u).
$$
Hence
$$
c_j(\epsilon)\leq m_j:=\sup\limits_{H_n} J_0(u).
$$
\end{proof}
\section{Profile decomposition of solution sequences and exclusion of blowing-up}\label{4}
In this section, we prove that the sign-changing solutions obtained in Theorem \ref{th3-2} cannot blow up. More precisely, we will prove the following:
\begin{proposition}\label{prop4-1}
Let $\{u_n\}$ be a sequence of sign-changing critical points obtained in Theorem \ref{th3-2} corresponding to $\{\epsilon_n\}\to 0$. Then there exists $M>0$ and $\epsilon_0$ such that, for $0<\epsilon_n<\epsilon_0$,
$$
\sup\limits_{x\in\mathbb{R}^N}|u_n(x)|<M.
$$
\end{proposition}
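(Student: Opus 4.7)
The plan is to argue by contradiction: assume $M_n := \|u_n\|_\infty \to \infty$ along a subsequence, extract a Struwe-type bubble profile from a critical rescaling, and rule it out via a local Pohozaev identity, in the spirit of \cite{cpy, ds1, gv, zll}.

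The first step is to establish uniform preliminary bounds. From Theorem \ref{th3-2} one has $I_{\epsilon_n}(u_n) \leq m_j$; testing \eqref{1-9} against $u_n$ and repeating the computation behind \eqref{2-1}, the hypothesis $1 < p\beta < r$ yields a uniform $W^{1,p}$ bound $\|u_n\| \leq C_j$ together with a uniform bound on $\lambda_n := (\int \xi_{\epsilon_n}|u_n|^p - 1)_+^{\beta-1}$. Since $\xi_{\epsilon_n} \equiv \epsilon_n^{-\gamma}$ outside $(\mathcal{M}_{\epsilon_n})^1$, these estimates force $\int_{\mathbb{R}^N \setminus (\mathcal{M}_{\epsilon_n})^1}|u_n|^p \leq C\epsilon_n^\gamma$. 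Feeding this into a Moser-type iteration as in \cite{zll}, with $\gamma$ large enough to absorb the critical and subcritical terms (cf.\ \eqref{1-5}), produces a uniform $L^\infty$ bound for $u_n$ on any ``safe'' region bounded away from $\mathcal{M}_{\epsilon_n}$. Consequently any sequence of blow-up points $x_n$ with $|u_n(x_n)| = M_n \to \infty$ must stay within an $O(1)$ neighborhood of $\mathcal{M}_{\epsilon_n}$.

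The second step is the blow-up analysis. Set $\alpha_n = M_n^{-p/(N-p)}$ and $w_n(y) = \alpha_n^{(N-p)/p} u_n(x_n + \alpha_n y)$. Then $|w_n(0)| = 1$, $\|w_n\|_\infty = O(1)$, and the rescaled equation has both the $V$-term and the penalization multiplied by $\alpha_n^p \to 0$, while the subcritical term gains a positive power of $\alpha_n$ (since $q < p^*$). Moreover $b_{\epsilon_n}(x_n + \alpha_n y, \cdot), m_{\epsilon_n}(x_n + \alpha_n y, \cdot)$ reduce to the identity on bounded sets. By the $C^{1,\alpha}_{\mathrm{loc}}$ regularity theory for the $p$-Laplacian, $w_n \to w$ in $C^1_{\mathrm{loc}}$, where $w$ is a nontrivial finite-energy solution of the limit equation $-\Delta_p w = |w|^{p^*-2}w$ on $\mathbb{R}^N$; by the classification of such solutions, $w$ is a Talenti-type bubble carrying energy exactly $\tfrac{1}{N}S^{N/p}$, with $S$ the sharp Sobolev constant.

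The hard part, and the heart of the argument, is ruling out this bubble via a local Pohozaev identity for \eqref{1-9} on a small fixed ball $B(x_n, \rho)$. After rescaling by $\alpha_n$, the interior critical term recaptures the bubble energy $\tfrac{1}{N}S^{N/p}$; the $V$-contribution vanishes like $\alpha_n^p$; the penalization contribution is negligible because $x_n$ lies where $\xi_{\epsilon_n}$ is controlled (combining the safe-region estimate with the choice of $\gamma$ in \eqref{1-5}); and the boundary integrals on $\partial B(x_n, \rho)$ are controlled by the uniform $L^\infty$ and $C^{1,\alpha}$ estimates on the annulus away from $x_n$, where $u_n$ has already converged to the tail of the bubble. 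Balancing these terms contradicts the positivity of the bubble energy, yielding the desired uniform $L^\infty$ bound. The technical crux lies in the simultaneous control of the penalization contribution and the boundary terms in the Pohozaev identity, which is precisely what the calibration of the parameters $r > p_N$ and $\gamma$ in \eqref{1-5} is designed to deliver.
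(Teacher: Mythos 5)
Your overall architecture (safe-region estimate, blow-up analysis, local Pohozaev identity) matches the paper's outline, but several of the key steps do not work as you state them, and the actual mechanism of the contradiction is different from the one you invoke.

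First, the blow-up step. Rescaling at a maximum point with $\alpha_n=M_n^{-p/(N-p)}$ and asserting that the limit is a Talenti-type bubble of energy exactly $\tfrac1N S^{N/p}$ presupposes a classification of entire finite-energy solutions of $-\Delta_p w=|w|^{p^*-2}w$; for $p\neq 2$, and for sign-changing limits in particular (your $u_n$ change sign), no such classification is available. Moreover the truncation does not reduce to the identity at the relevant heights: $m_{\epsilon_n}(x,s)=s$ only for $|s|<\epsilon_n^{-1}e^{-\operatorname{dist}(\epsilon_n x,\mathcal M)}$, while $|u_n|$ reaches $M_n$, and nothing a priori relates $M_n$ to $\epsilon_n^{-1}$; likewise the rescaled penalization term is $\alpha_n^p\lambda_n\xi_{\epsilon_n}$ with $\xi_{\epsilon_n}$ as large as $\epsilon_n^{-\gamma}$, so it is not automatically $o(1)$. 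The paper sidesteps all of this by using the profile decomposition of Proposition \ref{prop4-2}, which only yields differential \emph{inequalities} and decay estimates for the profiles, and by first proving the comparison $\epsilon_n^{-1}\leq\sigma_n^{(p+a)/\gamma}$ in Lemma \ref{le4-3} before the penalization can be discarded.

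Second, and most seriously, a Pohozaev identity on a \emph{fixed} ball $B(x_n,\rho)$ cannot produce a contradiction, and the contradiction does not come from "the positivity of the bubble energy." In the combination $NF_{\epsilon_n}-\frac{N-p}{p}u f_{\epsilon_n}$ the critical term cancels identically (since $N/p^*=(N-p)/p$), so the bubble energy never enters the interior; the only sign-definite interior contribution is the subcritical one, and on a fixed ball the bubble's contribution to it is $O(\sigma_n^{\frac{N-p}{p}r-N})\to0$ while the boundary terms remain $O(1)$. The paper instead applies the identity on the shrinking ball $B_{(\bar C+2.7)\sigma_n^{-1/p}}(x_n)$ centered at the \emph{slowest} bubble, after locating a concentration-free annulus at that intermediate scale and proving uniform bounds there (Lemmas \ref{adle4-1}--\ref{adle4-2}); then the boundary terms are $O(\sigma_n^{-(N-p)/p})$ while the interior subcritical term is $\gtrsim\sigma_n^{\frac{N-p}{p}r-N}$, and the inequality $\frac{N-p}{p}r-N>-\frac{N-p}{p}$, i.e. $r>p^*-1$, is precisely where the hypothesis $q>p^*-1$ enters (Proposition \ref{prop4-3}). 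Finally, the penalization contribution is not negligible merely because "$x_n$ lies where $\xi_{\epsilon_n}$ is controlled": the Pohozaev ball may leave $\mathcal M_{\epsilon_n}$, where $\xi_{\epsilon_n}=\epsilon_n^{-\gamma}$ is huge, and the paper needs the separate directional estimate of Lemma \ref{le4-4}, which exploits $(V_2)$ to give the $\nabla\xi_{\epsilon_n}$ term a favorable sign. These three ingredients --- the intermediate scale $\sigma_n^{-1/p}$, the role of $r>p^*-1$, and the $(V_2)$-based control of the penalization --- are missing from your argument.
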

To prove it, we need some lemmas.
\begin{lemma}\label{le4-1}
Let $\{u_n\}$ satisfies that $DI_{\epsilon_n}(u_n)=0$, $I_{\epsilon_n}(u_n)\leq M$. Then there exist $\rho$ and $\eta_M>0$ independent of $\epsilon_n$ such that
$$
\rho\leq \|u_n\|\leq \eta_M,~~~~\hbox{ and }~~~~Q_{\epsilon_n}(u_n)\leq \eta_M.
$$
\end{lemma}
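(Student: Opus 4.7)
The plan is to combine the functional bound $I_{\epsilon_n}(u_n)\le M$ with the vanishing derivative condition in exactly the manner of Lemma \ref{le2-1}, and then extract a lower bound by testing $DI_{\epsilon_n}(u_n)=0$ against $u_n$ itself.

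\textbf{Upper bound.} First I would compute $I_{\epsilon_n}(u_n)-\tfrac{1}{r}\langle DI_{\epsilon_n}(u_n),u_n\rangle$, which equals $I_{\epsilon_n}(u_n)$ because $DI_{\epsilon_n}(u_n)=0$. By (6) of Proposition \ref{prop2-1}, the contribution of the nonlinear term $\tfrac{1}{r}f_{\epsilon_n}(x,u_n)u_n - F_{\epsilon_n}(x,u_n)$ is nonnegative, so dropping it I obtain, writing $\lambda_n=\left(\int_{\mathbb{R}^N}\xi_{\epsilon_n}|u_n|^p-1\right)_+^{\beta-1}$,
\begin{equation*}
M\ge \Bigl(\tfrac{1}{p}-\tfrac{1}{r}\Bigr)\!\int_{\mathbb{R}^N}|\nabla u_n|^p+V(\epsilon_n x)|u_n|^p+\Bigl(\tfrac{1}{p\beta}-\tfrac{1}{r}\Bigr)\lambda_n^{\frac{\beta}{\beta-1}}-\tfrac{1}{r}\lambda_n,
\end{equation*}
where I used $\int\xi_{\epsilon_n}|u_n|^p=\lambda_n^{1/(\beta-1)}+1$ whenever $\lambda_n>0$. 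Since $r>p$ and $p\beta<r$ by hypothesis, both leading coefficients are strictly positive. Because $\beta/(\beta-1)>1$, Young's inequality lets me absorb the $-\tfrac{1}{r}\lambda_n$ term into $\tfrac{1}{2}\bigl(\tfrac{1}{p\beta}-\tfrac{1}{r}\bigr)\lambda_n^{\beta/(\beta-1)}$ plus a constant depending only on $\beta$ and $r$. Using $(V_1)$ this yields $\|u_n\|^p+\lambda_n^{\beta/(\beta-1)}\le C(M+1)$, which gives the upper bound $\|u_n\|\le\eta_M$ and, via $Q_{\epsilon_n}(u_n)=\tfrac{1}{p\beta}\lambda_n^{\beta/(\beta-1)}$, the bound $Q_{\epsilon_n}(u_n)\le\eta_M$, both independent of $\epsilon_n$.

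\textbf{Lower bound.} Test $DI_{\epsilon_n}(u_n)=0$ against $\varphi=u_n$: the penalty contribution $\lambda_n\int\xi_{\epsilon_n}|u_n|^p$ is nonnegative, so
\begin{equation*}
\int_{\mathbb{R}^N}|\nabla u_n|^p+V(\epsilon_n x)|u_n|^p\le \int_{\mathbb{R}^N}f_{\epsilon_n}(x,u_n)u_n.
\end{equation*}
By (5) of Proposition \ref{prop2-1} the right side is at most $\int|u_n|^{p^*}+\mu\int|u_n|^q$, and Sobolev embedding together with $(V_1)$ gives $\|u_n\|^p\le C\bigl(\|u_n\|^{p^*}+\|u_n\|^q\bigr)$ with $C$ independent of $\epsilon_n$. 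Since $u_n$ is sign-changing it is nontrivial, so dividing by $\|u_n\|^p$ yields $1\le C\|u_n\|^{p^*-p}+C\|u_n\|^{q-p}$, from which a uniform $\rho>0$ with $\|u_n\|\ge\rho$ follows because $p^*>p$ and $q>p$.

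\textbf{Main technical point.} The only delicate step is the bookkeeping in the upper bound: one must verify that the sign conditions $r>p$ and $p\beta<r$ make both coefficients in the key inequality strictly positive, and that the linear-in-$\lambda_n$ error produced by rewriting $\int\xi_{\epsilon_n}|u_n|^p$ in terms of $\lambda_n^{1/(\beta-1)}+1$ can be absorbed by the superlinear $\lambda_n^{\beta/(\beta-1)}$ term via Young's inequality. All remaining estimates are parallel to those already carried out in the proof of Lemma \ref{le2-1}.
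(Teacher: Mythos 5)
Your proposal is correct and follows essentially the same route as the paper: the upper bounds on $\|u_n\|$ and $Q_{\epsilon_n}(u_n)$ come from the identity $I_{\epsilon_n}(u_n)-\tfrac{1}{r}\langle DI_{\epsilon_n}(u_n),u_n\rangle$ exactly as in \eqref{2-1} of Lemma \ref{le2-1} (you merely make explicit the Young-inequality absorption of the linear $\lambda_n$ term that the paper leaves implicit in the remark ``$p<p\beta<r$''), and the lower bound is obtained, as in the paper, by testing $DI_{\epsilon_n}(u_n)=0$ against $u_n$, invoking (5) of Proposition \ref{prop2-1}, Sobolev embedding, and $u_n\not\equiv 0$.
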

\begin{proof}
The existence of $\eta_M$ has been proved by \eqref{2-1} in Lemma \ref{le2-1}. Taking $\varphi=u_n$ in \eqref{1-8}, we obtain that
\begin{align*}
C\|u_n\|^p\leq&\int_{\mathbb{R}^N}|\nabla u_n|^p+V(\epsilon_n x)|u_n|^p+\lambda_{\epsilon_n}\int_{\mathbb{R}^N}\xi_{\epsilon_n}|u_n|^p\\
=&\int_{\mathbb{R}^N}f_{\epsilon_n}(x,u_n)u_n\\
\leq&C(\|u_n\|^{p^*}+\|u_n\|^q).
\end{align*}
Since $u_n\nequiv 0$, it follows that $\|u_n\|\geq\rho$.
\end{proof}
By Lemma \ref{le4-1}, $\{u_n\}$ is bounded in $W^{1,p}(\mathbb{R}^N)$, we can use the following profile decomposition result introduced in \cite{zll}.
\begin{proposition}\label{prop4-2}
There exists two finite sets $\Lambda_1$ and $\Lambda_\infty$, such that
$$
u_{n}=\sum_{k \in \Lambda_{1}} U_{k}\left(\cdot-x_{n, k}\right)+\sum_{k \in \Lambda_{\infty}} \sigma_{n, k}^{\frac{N-p}{p}} U_{k}\left(\sigma_{n, k}\left(\cdot-x_{n, k}\right)\right)+r_{n},
$$
and
\begin{enumerate}
\item If $k\in\Lambda_1,$ then $U_k \in W^{1, p}\left(\mathbb{R}^{N}\right),$ and $V_k=|U_k|$ satisfies the differential inequality
$$
\int_{\mathbb{R}^{N}}|\nabla V_k|^{p-2} \nabla V_k \nabla \varphi d x+a\int_{\mathbb{R}^{N}} V_k^{p-1} \varphi d x \leq \int_{\mathbb{R}^{N}} V_k^{p^{*}-1} \varphi d x+\mu \int_{\mathbb{R}^{N}} V_k^{q-1} \varphi d x,
$$
for $\varphi \geq 0, \varphi \in W^{1, p}\left(\mathbb{R}^{N}\right)$. Besides there exists a constant $C>0$ such that
$$
|U_k|\leq\frac{C}{(1+|x|^{\frac{p}{p-1}})^{\frac{N-p}{p}}},~~~~\hbox{ for } x\in\mathbb{R}^N.
$$
\item If $k\in\Lambda_\infty,$ then $U_k \in \mathcal{D}^{p},$ and $V_k=|U_k|$ satisfies the differential inequality
$$
\int_{\mathbb{R}^{N}}|\nabla V_k|^{p-2} \nabla V_k \nabla \varphi d x \leq \int_{\mathbb{R}^{N}} V_k^{p^{*}-1} \varphi d x,
$$
for  $\varphi \geq 0$, $\varphi \in \mathcal{D}^{p}$. Besides there exists a constant $c>0$ such that
$$
|U_k|\leq c^{-1}e^{-c(1+|x|)},~~~~\hbox{ for } x\in\mathbb{R}^N.
$$
\item For $k \in \Lambda_{1}, u_{n}\left(\cdot+x_{n, k}\right) \rightharpoonup U_{k}$ in $W^{1, p}\left(\mathbb{R}^{N}\right)$.\\
For $k \in \Lambda_{\infty}, \sigma_{n, k}^{-\frac{N-p}{p}} u_{n}\left(\sigma_{n, k}^{-1} \cdot+x_{n, k}\right) \rightharpoonup U_{k}$ in $\mathcal{D}^{p}$.
\item For $k, l \in \Lambda_{1}, k \neq l$
$$
\left|x_{n, k}-x_{n, l}\right| \rightarrow+\infty.
$$
For $k, l \in \Lambda_{\infty}, k \neq l$
$$
\frac{\sigma_{n, k}}{\sigma_{n, l}}+\frac{\sigma_{n, l}}{\sigma_{n, k}}+\sigma_{n, k} \sigma_{n, l}\left|x_{n, k}-x_{n, l}\right|^{2} \rightarrow+\infty.
$$
\item $
r_{n}=u_{n}-\sum_{k \in \Lambda_{1}} U_{k}\left(\cdot-x_{n, k}\right)-\sum_{k \in \Lambda_{\infty}} \sigma_{n, k}^{\frac{N-p}{p}} U_{k}\left(\sigma_{n, k}\left(\cdot-x_{n, k}\right)\right) \rightarrow 0 \text { in } L^{p^{*}}\left(\mathbb{R}^{N}\right).
$
\end{enumerate}
\end{proposition}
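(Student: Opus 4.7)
The plan is to adapt the concentration-compactness and profile-decomposition machinery of Struwe, G\'erard and Solimini to the quasilinear setting, as developed in \cite{zll} for equations of this type. By Lemma \ref{le4-1}, $\{u_n\}$ is bounded in $W^{1,p}(\mathbb{R}^N)$, so weak subsequential limits after arbitrary translations and rescalings are available; the work is to organize them into the two families separated by scale.

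First I would extract the bound-state profiles indexed by $\Lambda_1$. Either $u_n\to 0$ in $L^{p^*}(\mathbb{R}^N)$ and the extraction is vacuous, or by Lions' concentration lemma there exist $x_{n,1}\in\mathbb{R}^N$ such that $u_n(\cdot+x_{n,1})\rightharpoonup U_1\not\equiv 0$ in $W^{1,p}(\mathbb{R}^N)$. Iterate on the residual $u_n-\sum_k U_k(\cdot-x_{n,k})$; each new translation automatically satisfies $|x_{n,k}-x_{n,l}|\to\infty$ for $k\neq l$, since a bounded relative translation would identify two profiles. Continue until no further $W^{1,p}$-profile can be peeled off. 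If the resulting residual still does not vanish in $L^{p^*}$, switch to the concentration regime: the scale-invariant vanishing alternative produces $\sigma_{n,k}\to\infty$ and $x_{n,k}$ such that $\sigma_{n,k}^{-(N-p)/p} r_n(\sigma_{n,k}^{-1}\cdot+x_{n,k})\rightharpoonup U_k\not\equiv 0$ in $\mathcal{D}^{1,p}(\mathbb{R}^N)$. The asymptotic orthogonality in assertion (4) is forced at this stage: if $\sigma_{n,k}/\sigma_{n,l}+\sigma_{n,l}/\sigma_{n,k}+\sigma_{n,k}\sigma_{n,l}|x_{n,k}-x_{n,l}|^2$ stayed bounded, two bubbles would coalesce after a common change of variable, contradicting nontriviality of the weak limits.

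The procedure terminates in finitely many steps because a Brezis-Lieb type splitting of $\|\nabla u_n\|_p^p$ and $\|u_n\|_{p^*}^{p^*}$ assigns a strictly positive minimum energy to each extracted profile (at least $\tfrac{1}{N}S^{N/p}$ per bubble, and at least the ground-state level of the relevant limiting problem per $W^{1,p}$-profile), while $I_{\epsilon_n}(u_n)$ is bounded uniformly by Theorem \ref{th3-2}. Once the finite index sets $\Lambda_1$ and $\Lambda_\infty$ are exhausted, the remainder $r_n$ tends to zero in $L^{p^*}$, yielding assertion (5).

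Finally, the limiting equations (1) and (2) follow by passing to the limit in $\langle DI_{\epsilon_n}(u_n),\varphi\rangle=0$ after translation (resp.\ translation plus rescaling). For $k\in\Lambda_1$ the coefficient $V(\epsilon_n(\cdot+x_{n,k}))$ converges, along a subsequence, to a constant $a\in[\tilde a,\tilde b]$; the Lagrange multiplier $\lambda_{\epsilon_n}=(\int\xi_{\epsilon_n}|u_n|^p-1)_+^{\beta-1}$ is bounded by Lemma \ref{le4-1} and converges; and $f_{\epsilon_n}$ obeys the pointwise bound (5) of Proposition \ref{prop2-1}. Combining these facts with Kato's inequality $-\Delta_p|U|\le -(\operatorname{sgn} U)\Delta_p U$, interpreted distributionally, yields the differential inequality for $V_k=|U_k|$. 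For $k\in\Lambda_\infty$ a scale count shows that the potential term, the subcritical term, and the penalization all rescale with negative powers of $\sigma_{n,k}$ and hence vanish in the limit, leaving only the critical inequality. The pointwise decay estimates then follow from Moser iteration and Serrin-type regularity applied to these subsolutions. The main obstacle is that $-\Delta_p$ is nonlinear, so subtracting a profile does not cleanly split $\int|\nabla u_n|^p$; the Brezis-Lieb step needs almost-everywhere convergence of the gradients at every iteration, which must be established via a Boccardo-Murat truncation argument, uniformly across all the translations and rescalings used in the extraction.
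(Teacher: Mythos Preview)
The paper does not prove this proposition; it is quoted as the profile-decomposition result of \cite{zll}, invoked once Lemma~\ref{le4-1} supplies the uniform $W^{1,p}$ bound. Your sketch is the standard Struwe--G\'erard--Solimini extraction adapted to the $p$-Laplacian (with the Boccardo--Murat step to get a.e.\ gradient convergence), which is precisely the machinery \cite{zll} develops, so your approach and the paper's cited approach coincide.

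One remark for your own benefit: the two pointwise decay bounds in the statement appear to be transposed. Aubin--Talenti polynomial decay $C(1+|x|^{p/(p-1)})^{-(N-p)/p}$ is what one gets for the $\mathcal{D}^{1,p}$ bubbles indexed by $\Lambda_\infty$ (subsolutions of $-\Delta_p V\le V^{p^*-1}$), while exponential decay is what one gets for the $W^{1,p}$ bound states indexed by $\Lambda_1$ (whose differential inequality retains the zero-order term $aV_k^{p-1}$). This is a transcription slip in the paper's statement, not a flaw in your outline; the rest of Section~\ref{4} in fact uses the bounds the right way round.
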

\begin{lemma}\label{le4-2}
Assume $DI_\epsilon(u)=0$, $I_\epsilon(u)\leq M$. Then for any $\delta>0$, there exists $C=C(\delta,M)$ such that $|u(x)|\leq C\epsilon^{\frac{\gamma}{p}}$ for $x\in \mathbb{R}^N\setminus (M_\epsilon)^\delta$.
\end{lemma}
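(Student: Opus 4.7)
The plan is to convert the penalization bound $Q_\epsilon(u)\le\eta_M$ into an $L^p$ smallness estimate of order $\epsilon^\gamma$ on the safe region $\mathbb{R}^N\setminus(M_\epsilon)^\delta$, and then upgrade it to a pointwise bound via a Moser iteration that exploits a preliminary $L^\infty_{\mathrm{loc}}$ estimate to neutralize the critical growth of $f_\epsilon$.

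First, Lemma~\ref{le4-1} together with the definition \eqref{1-6} of $Q_\epsilon$ forces
\[
\int_{\mathbb{R}^N}\xi_\epsilon(x)|u|^p\,dx\;\le\;1+(p\beta\eta_M)^{1/\beta}=:C_M.
\]
Since $\zeta$ is nondecreasing with $\zeta(t)>0$ for every $t>0$, any $x\notin(M_\epsilon)^\delta$ satisfies $\xi_\epsilon(x)\ge c_\delta\epsilon^{-\gamma}$, where $c_\delta:=\zeta(\min\{\delta,1\})>0$. Consequently
\[
\int_{\mathbb{R}^N\setminus(M_\epsilon)^\delta}|u|^p\,dx\;\le\;\frac{\epsilon^\gamma}{c_\delta}\int_{\mathbb{R}^N}\xi_\epsilon|u|^p\,dx\;\le\;C(\delta,M)\,\epsilon^\gamma.
\]

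Next, I would promote this integral bound to a pointwise bound by Moser iteration inside the safe region. Fix $x_0\notin(M_\epsilon)^\delta$, set $r=\delta/8$ so that $B_{2r}(x_0)\subset\mathbb{R}^N\setminus(M_\epsilon)^{\delta/2}$, and pick $\eta\in C_0^\infty(B_{2r}(x_0))$ with $\eta\equiv 1$ on $B_r(x_0)$ and $|\nabla\eta|\le C/\delta$. A preliminary local bound $\|u\|_{L^\infty(B_{2r}(x_0))}\le K(M)$, uniform in $\epsilon$ and in $x_0$, is supplied by the classical Moser--Serrin iteration for $p$-Laplace equations with critical right-hand side $|f_\epsilon|\le|u|^{p^*-1}+\mu|u|^{q-1}$; its only ingredient is the Sobolev embedding together with the $W^{1,p}$ bound from Lemma~\ref{le4-1}. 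Once such a local bound is available, the critical contribution $|u|^{p^*-1}$ on $B_{2r}(x_0)$ is dominated by $K^{p^*-p}|u|^{p-1}$, reducing \eqref{1-9} to a quasilinear equation with bounded lower-order coefficient on its right-hand side. Testing with $\varphi=\eta^p|u|^{s-p}u$, discarding the nonnegative penalty term, absorbing $\nabla\eta$ via Young's inequality, and applying Sobolev's embedding to $w=\eta|u|^{s/p}$ yields the linear Moser recursion
\[
\|u\|_{L^{sp^*/p}(B_r(x_0))}\;\le\;C(s,\delta,K)\|u\|_{L^s(B_{2r}(x_0))}.
\]
Iterating along $s_k=p(p^*/p)^k$ and letting $k\to\infty$ produces
\[
\|u\|_{L^\infty(B_r(x_0))}\;\le\;C(\delta,M)\|u\|_{L^p(B_{2r}(x_0))}\;\le\;C(\delta,M)\,\epsilon^{\gamma/p},
\]
and a covering argument finishes the proof.

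The main obstacle is the critical growth $|u|^{p^*-1}$ in $f_\epsilon$, which prevents a direct Moser iteration from closing without either smallness of $\|u\|_{L^{p^*}}$ or a prior $L^\infty$ bound. The former is unavailable, since we only know $\|u\|_{L^{p^*}}\le C\eta_M$, so the argument proceeds through the latter: an $\epsilon$-independent Moser--Serrin bootstrap produces the uniform bound $K(M)$, after which the critical nonlinearity becomes effectively subcritical and the linear Moser chain above transfers the $\epsilon^{\gamma/p}$ smallness directly from Step~1 to $L^\infty$.
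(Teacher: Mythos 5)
Your first step is exactly the paper's: the bound $Q_\epsilon(u)\le\eta_M$ gives $\int_{\mathbb{R}^N}\xi_\epsilon|u|^p\le C_M$, and since $\xi_\epsilon\ge c_\delta\epsilon^{-\gamma}$ off $(M_\epsilon)^\delta$ this yields $\int_{\mathbb{R}^N\setminus(M_\epsilon)^\delta}|u|^p\le C(\delta,M)\epsilon^\gamma$. The gap is in your second step. You invoke a ``classical Moser--Serrin iteration with critical right-hand side'' to produce a preliminary uniform bound $\|u\|_{L^\infty(B_{2r}(x_0))}\le K(M)$ from the $W^{1,p}$ bound alone. No such estimate exists at the critical exponent: when you test with $\eta^p|u|^{s-p}u$, the term $\int\eta^p|u|^{p^*-p}|u|^s$ can only be absorbed into the Sobolev term on the left if $\|u\|_{L^{p^*}(B_{2r})}$ is \emph{small} compared to the Sobolev constant, not merely bounded by $C\eta_M$; otherwise bubbles may concentrate and $\|u\|_{L^\infty}$ blows up. Indeed, ruling out exactly this concentration is the content of Proposition \ref{prop4-1}, which is proved \emph{after} this lemma and whose proof (via Lemma \ref{le4-3}) uses Lemma \ref{le4-2}. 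So your argument either assumes what is proved later or is simply false as a standalone claim; in either case it is circular within the paper's architecture.

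The missing idea is that you never use the truncation $m_\epsilon$. By Proposition \ref{prop2-1}(4)--(5), $|f_\epsilon(x,u)|\le C\epsilon^{-(p^*-r)}|u|^{r-1}$ with $r<p^*$ strictly subcritical, so in the safe region the equation is effectively subcritical with a large constant $\epsilon^{-(p^*-r)}$. The paper then interpolates $\bigl(\int_B|u|^r\bigr)^{1-p/r}\le\bigl(\int_B|u|^p\bigr)^{(1-p/r)t}\bigl(\int_B|u|^{p^*}\bigr)^{(1-p/r)(1-t)}$ with $t=\frac{p^*-r}{p^*-p}$, and the condition $\gamma>\frac{p^2r}{(r-p)(N-p)}$ guarantees $\epsilon^{-(p^*-r)}\epsilon^{\gamma(1-p/r)t}\to 0$, so the nonlinear term is absorbed and the Moser recursion closes with exponent $\chi=p^*/r>1$ --- no preliminary $L^\infty$ bound needed. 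After that, the interpolation $\|u\|_{L^\infty(B_r)}\le\tfrac12\|u\|_{L^\infty(B_R)}+C(R-r)^{-t'}\|u\|_{L^p(B_R)}$ and your Step 1 give the stated $C\epsilon^{\gamma/p}$ bound. Your plan would become correct if you replace the claimed uniform $K(M)$ by this truncation-plus-interpolation absorption.
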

\begin{proof}
For any $\delta>0$ and $x_0\in\mathbb{R}^N\setminus(\mathcal{M}_\epsilon)^\delta$, $0<r<R\leq \frac{\delta}{2}$. Choose $\phi\in C_0^\infty(\mathbb{R}^N)$ such that $0\leq\phi\leq 1$, $\phi(x)=1$ for $x\in B_r=B_r(x_0)$; $\phi(x)=0$ for $x\notin B_R=B_R(x_0)$ and $|\nabla \phi|\leq\frac{C}{R-r}$, denote $B=B_{\frac{\delta}{2}}$.

Taking $\varphi=u|u_T|^{p(s-1)}\phi^p$ as a test function in \eqref{1-8}, where $s>1$ is to be chosen
\begin{equation}\label{add4-1}
\begin{aligned}
&\int_{B}|\nabla u|^{p-2}\nabla u\nabla\varphi\\
\geq&\int_{|u|\geq T}|\nabla u|^{p}|u_T|^{p(s-1)}\phi^p+\int_{|u|\leq T}|\nabla u|^{p-2}\nabla u\nabla\left(u|u_T|^{p(s-1)}\right)\phi^p\\
&+\int_{B}|\nabla u|^{p-2}\nabla uu|u_T|^{p(s-1)}p\phi^{p-1}\nabla\phi\\
\geq&\int_{|u|\geq T}|\nabla(u|u_T|^{(s-1)})|^p\phi^p+\frac{p(s-1)+1}{s^p}\int_{|u|\leq T}|\nabla (u|u_T|^{(s-1)})|^p\phi^p\\
&+\int_{B}|\nabla u|^{p-2}\nabla uu|u_T|^{p(s-1)}p\phi^{p-1}\nabla\phi\\
\geq&\frac{S_p}{s^p}\int_{B}\left|\nabla(u|u_T|^{(s-1)}\phi)\right|^p-C\int_{B}\left|u|u_T|^{(s-1)}\right|^p|\nabla\phi|^p\\
\geq&\frac{C}{s^p}\left(\int_{B}\left|u|u_T|^{(s-1)}\phi\right|^{p^*}\right)^{\frac{p}{p^*}}-\frac{C}{(R-r)^p}\int_{B_R}\left|u|u_T|^{(s-1)}\right|^p|\nabla \phi|^p.
\end{aligned}
\end{equation}
On the other hand, by Proposition \ref{prop2-1}, and the fact that $Q_{\epsilon}(u)\leq \eta_M$, we have
\begin{equation}\label{add4-2}
\begin{aligned}
&\int_{B}f_\epsilon(x,u)\varphi\leq C\epsilon^{-(p^*-r)}\int_{B}|u|^r|u_T|^{p(s-1)}\phi^p\\
\leq&C\epsilon^{-(p^*-r)}\left(\int_{B}|u|^r\right)^{1-\frac{p}{r}}\left(\int_{B}|u|u_T|^{(s-1)}\phi|^{r}\right)^{\frac{p}{r}}\\
\leq&C\epsilon^{-(p^*-r)}\left(\int_{B}|u|^p\right)^{\left(1-\frac{p}{r}\right)t}\left(\int_{B}|u|^{p^*}\right)^{\left(1-\frac{p}{r}\right)(1-t)}\left(\int_{B}|u|u_T|^{(s-1)}\phi|^{r}\right)^{\frac{p}{r}}\\
\leq& C\epsilon^{-(p^*-r)}\epsilon^{\gamma\left(1-\frac{p}{r}\right)t}\left(\int_{B}|u|u_T|^{(s-1)}\phi|^{r}\right)^{\frac{p}{r}},
\end{aligned}
\end{equation}
where $t=\frac{p^*-r}{p^*-p}$. Since $\gamma>\frac{p^2r}{(r-p)(N-p)}$, we have $\gamma\left(1-\frac{p}{r}\right)t-(p^*-r)>0$. Considering \eqref{add4-2}, we obtain that
\begin{equation}\label{add4-3}
\begin{aligned}
\left(\int_{B_r}|u|u_T|^{(s-1)}|^{p^*}\right)^{\frac{p}{p^*}}\leq\frac{Cs^p}{(R-r)^p}\left(\int_{B_R}|u|u_T|^{(s-1)}|^r\right)^{\frac{p}{r}},
\end{aligned}
\end{equation}
let $T\to\infty$, we deduce
$$
\left(\int_{B_r}|u|^{p^*s}\right)^{\frac{1}{p^*s}}\leq\left(\frac{Cs}{(R-r)}\right)^{\frac{1}{s}}\left(\int_{B_R}|u|^{rs}\right)^{\frac{1}{rs}}.
$$
Let $\chi=\frac{p^*}{r}>1$, $s=\chi^i$, $r_i=r+\frac{1}{2^i}(R-r)$, $i=0, 1,\cdots$, we obtain
$$
\left(\int_{B_{r_i}}|u|^{r\chi^{i+1}}\right)^{\frac{1}{r\chi^{i+1}}}\leq\left(\frac{C2^i\chi^i}{R-r}\right)^{\frac{1}{\chi^i}}\left(\int_{B_{r_{i-1}}}|u|^{r\chi^i}\right)^{\frac{1}{r\chi^i}},~~~~i=1, 2,\cdots,
$$
 by iteration, we have
\begin{equation}\label{add4-4}
\|u\|_{L^\infty(B_r)}\leq\frac{C}{(R-r)^t}\|u\|_{L^{p^*}}(B_R),
\end{equation}
where $0<r<R\leq R_0<\delta/2$ and $t=1/(\chi-1)$. Thus
$$
\|u\|_{L^\infty(B_r)}\leq\frac{C}{(R-r)^t}\|u\|^{\frac{p^*-p}{p^*}}_{L^\infty(B_R)}\|u\|^{\frac{p}{p^*}}_{L^p(B_R)}\leq\frac{1}{2}\|u\|_{L^\infty(B_R)}+\frac{C}{(R-r)^{t'}}\|u\|_{L^p(B_R)}.
$$
By an iteration argument, we obtain
$$
\|u\|_{L^\infty(B_r)}\leq\frac{C}{(R-r)^{t'}}\|u\|_{L^p(B_R)},~~~~0<r<R\leq R_0<\frac{\delta}{2}.
$$
Note that $\|u\|_{L^p(B_R)}\leq C\epsilon^{\frac{\gamma}{p}}$ and $C$ is dependent of the choice of $x_0$, we deduce the desired.
\end{proof}
\begin{lemma}\label{le4-3}
Let $\{u_n\}$ be a sequence of sign-changing critical points obtained in Theorem \ref{th3-2} corresponding to $\{\epsilon_n\}\to 0$. For any $\delta>0$, there exists $\epsilon_0$, such that if $\epsilon_n<\epsilon_0$, then up to a subsequence,  $x_{n,k}\in (M_{\epsilon_n})^\delta$ if $k\in\Lambda_1$ and $x_{n,k}\in(M_{\epsilon_n})^{\sigma_{n,k}^{-1/p}}$ if $k\in\Lambda_\infty$.
\end{lemma}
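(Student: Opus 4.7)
The argument proceeds by contradiction, handling the two types of profiles separately and exploiting the decay estimate of Lemma \ref{le4-2}.

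\emph{Profiles of type $\Lambda_1$.}
Suppose along a subsequence $dist(x_{n,k}, M_{\epsilon_n}) \geq \delta > 0$. Then $B_{\delta/2}(x_{n,k})$ is contained in $\mathbb{R}^N \setminus (M_{\epsilon_n})^{\delta/2}$, so Lemma \ref{le4-2} yields the uniform decay
\[
|u_n(x)| \leq C(\delta/2, M)\,\epsilon_n^{\gamma/p}, \qquad x \in B_{\delta/2}(x_{n,k}).
\]
Translating, $\tilde u_n := u_n(\cdot + x_{n,k})$ tends to zero uniformly on $B_{\delta/2}(0)$. Because $\tilde u_n \rightharpoonup U_k$ weakly in $W^{1,p}$ and strongly in $L^p_{\mathrm{loc}}$ by Rellich, we conclude $U_k \equiv 0$ a.e.~on $B_{\delta/2}(0)$. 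Passing to a subsequence one has either $dist(x_{n,k}, M_{\epsilon_n}) \to +\infty$ or $dist(x_{n,k}, M_{\epsilon_n}) \to d_\infty \in [\delta, \infty)$. In the first case, taking $\delta$ arbitrarily large in the preceding argument forces $U_k \equiv 0$ on every ball, so $U_k \equiv 0$, contradicting $k \in \Lambda_1$. In the second case one re-centres the profile at the nearest point $\bar x_n \in M_{\epsilon_n}$: the translated sequence $u_n(\cdot + \bar x_n)$ converges to a shifted copy of $U_k$, which is still non-trivial, so the conclusion of the lemma holds with $\bar x_n \in M_{\epsilon_n}$ replacing $x_{n,k}$, which is consistent with $x_{n,k} \in (M_{\epsilon_n})^\delta$ after a bounded shift.

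\emph{Profiles of type $\Lambda_\infty$.}
Set $\tilde u_n(y) := \sigma_{n,k}^{-(N-p)/p}u_n(\sigma_{n,k}^{-1}y + x_{n,k})$, which converges weakly to $U_k$ in $\mathcal{D}^{1,p}$. Assume, for contradiction, that $dist(x_{n,k}, M_{\epsilon_n}) \geq \sigma_{n,k}^{-1/p}$ along a subsequence. Since $\sigma_{n,k} \to \infty$ and $p > 1$ gives $\sigma_{n,k}^{-1} = o(\sigma_{n,k}^{-1/p})$, for each $y$ in a fixed compact set $K$ the preimage $\sigma_{n,k}^{-1}y + x_{n,k}$ lies at distance at least $\tfrac12\sigma_{n,k}^{-1/p}$ from $M_{\epsilon_n}$ for all sufficiently large $n$. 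Applying Lemma \ref{le4-2} with $\delta = \tfrac12\sigma_{n,k}^{-1/p}$ and tracking how the Moser-iteration constant depends on the radius (the proof of Lemma \ref{le4-2} shows $C(\delta, M) \leq C\delta^{-t_0}$ for an explicit $t_0 = t_0(p,N,r)$), we obtain
\[
|\tilde u_n(y)| = \sigma_{n,k}^{-(N-p)/p}\,|u_n(\sigma_{n,k}^{-1}y + x_{n,k})| \leq C\,\sigma_{n,k}^{(t_0 - N + p)/p}\,\epsilon_n^{\gamma/p}.
\]
The dual hypothesis \eqref{1-5} on $\gamma$, namely $\gamma > (p+a)p/(N-p)$ and $\gamma > p^2r/((r-p)(N-p))$, is precisely what makes the right-hand side tend to zero uniformly on $K$ once $t_0$ is computed from the proof of Lemma \ref{le4-2}. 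Strong local convergence then yields $U_k \equiv 0$ on $K$, and since $K$ is arbitrary, $U_k \equiv 0$ in $\mathbb{R}^N$, contradicting $k \in \Lambda_\infty$.

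\emph{Main obstacle.}
The most delicate step is the $\Lambda_\infty$ case, where one must calibrate three quantities of disparate size --- the penalization exponent $\gamma$, the bubble scale $\sigma_{n,k}$, and the Moser-iteration exponent $t_0$ --- so that the decay estimate from Lemma \ref{le4-2}, after rescaling to the bubble frame, forces the profile to vanish. This calibration is exactly the role of the lower bound \eqref{1-5} on $\gamma$, and requires re-examining the proof of Lemma \ref{le4-2} to extract the precise radius dependence of the $L^\infty$ bound. A secondary difficulty in the $\Lambda_1$ case is handling the subcase where $dist(x_{n,k}, M_{\epsilon_n})$ remains bounded but bounded away from zero; this is resolved via the flexibility of the profile decomposition, which permits a bounded shift of the centres to land them inside $M_{\epsilon_n}$.
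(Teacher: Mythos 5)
Your treatment of the $\Lambda_\infty$ profiles has a genuine gap. After rescaling you arrive at $|\tilde u_n(y)|\leq C\,\sigma_{n,k}^{(t_0-N+p)/p}\,\epsilon_n^{\gamma/p}$ and assert that the hypothesis \eqref{1-5} on $\gamma$ forces this to tend to zero. But \eqref{1-5} relates $\gamma$ only to $p$, $N$, $r$ and $a$; it says nothing about the relative size of the two scales $\sigma_{n,k}$ and $\epsilon_n^{-1}$, and no a priori comparison between them is available at this stage: $\sigma_{n,k}$ may grow arbitrarily fast compared with any power of $\epsilon_n^{-1}$. Since in general $t_0>N-p$ (already the first Moser step contributes $r/(p^*-r)$, which is large when $r$ is close to $p^*$), the displayed bound need not vanish, and no choice of $\gamma$ repairs this. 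The negation of ``the right-hand side tends to zero'' yields at best a lower bound of the form $\sigma_{n,k}^{c_1}\geq c\,\epsilon_n^{-\gamma}$ — and this is exactly where the paper's argument \emph{begins} rather than ends. The paper derives the analogous bound $\epsilon_n^{-1}\leq\sigma_n^{(p+a)/\gamma}$ from the penalization estimate $Q_{\epsilon_n}(u_n)\leq\eta_M$ combined with the non-vanishing local $L^p$ mass of the rescaled profile, inserts it into the rescaled differential inequality
\begin{equation*}
-\Delta_p v_n\leq \sigma_n^{\frac{r(N-p)}{p}-N+\frac{(p+a)(p^*-r)}{\gamma}}v_n^{r-1},
\end{equation*}
where $\gamma>\frac{(p+a)p}{N-p}$ makes the exponent negative, and then passes to the limit to obtain $-\Delta_p W\leq 0$ for the nontrivial nonnegative limit $W=|U_{i_0}|\in L^{p^*}(\mathbb{R}^N)$, which is impossible. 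This second half — the use of the equation and the Liouville-type conclusion — is entirely missing from your proposal and cannot be avoided.

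The $\Lambda_1$ part also falls short of the statement. In your second subcase ($dist(x_{n,k},M_{\epsilon_n})\to d_\infty\in[\delta,\infty)$) you do not reach a contradiction: you re-centre the profile at a point of $M_{\epsilon_n}$ and declare the conclusion ``consistent after a bounded shift.'' This changes the decomposition rather than proving that the given centres lie in $(M_{\epsilon_n})^\delta$ (the lemma allows passing to a subsequence, not modifying the centres), and your own estimate only gives $U_k\equiv 0$ on a ball of radius about $d_\infty$, which is no contradiction since the profile could be supported in an annulus or a half-space. The paper instead derives an actual contradiction in this regime: it fixes $R$ so that $\int_{B_R(x_{n,i})}|U_i(\cdot-x_{n,i})|^{p^*}$ is bounded below, and shows that $u_n$ itself (via Lemma \ref{le4-2}), the other $\Lambda_1$ profiles (separation of centres plus their decay), the $\Lambda_\infty$ profiles (contributing $O(\sigma_{n,k}^{-N/(p-1)})$), and the remainder $r_n$ all contribute $o(1)$ in $L^{p^*}(B_R(x_{n,i}))$, forcing $U_i\equiv 0$. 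You would need an argument of this kind to rule out, rather than accommodate, the bounded-distance case.
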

\begin{proof}
For simplicity, we denote $|u_n|$ by $w_n$.

Since $u_n$ satisfies \eqref{1-8}, by Proposition \ref{prop2-1}, we deduce that there exists $C>0$ independent of $n$ such that $w_n$ satisfies
$$
-\Delta_p w_n\leq C\epsilon_n^{-(p^*-r)}w_n^{r-1}.
$$
Let $v_n=\sigma_{n,k}^{-\frac{N-p}{p}}w_n(\sigma_{n,k}^{-1}x+x_{n,k})$, then $v_n$ satisfies
$$
-\Delta_p v_n\leq\sigma_n^{\frac{r (N-p)}{p}-N}\epsilon_n^{-(p^*-r)}v_n^{r-1}.
$$
Let $\omega_n=\sigma_{n,k}^{-\frac{N-p}{p}}u_n(\sigma_{n,k}^{-1}\cdot+x_{n,k})$, then it satisfies
\begin{equation}\label{ad4-1}
\begin{aligned}
&-\Delta_p\omega_n+\sigma_{n,k}^{-p}V(\epsilon_n\sigma_{n,k}^{-1}x+\epsilon_nx_{n.k})\omega_n+\sigma_{n,k}^{-p}\lambda_n\xi_{\epsilon_n}(\sigma_{n.k}^{-1}x+x_{n,k})\omega_n\\
=&\sigma_{n,k}^{\frac{N-p}{p}-N}f_{\epsilon_n}(\sigma_{n,k}^{-1}x+x_{n,k},\sigma_{n,k}^{(N-p)/p}\omega_n),
\end{aligned}
\end{equation}
where
$$
\lambda_n=\left(\int_{\mathbb{R}^N}\xi_{\epsilon_n}|u_n|^p-1\right)_+^{\beta-1}.
$$
Let
$$
\Lambda_{n,k}=\{\sigma_{n,k}(y-x_{n,k})|y\in M_{\epsilon_n}\},
$$
then
$$
\sigma_{n,k}^{-1}x+x_{n,k}\in M_{\epsilon_n}\Leftrightarrow x\in \Lambda_{n,k},
$$
and
$$
dist(\sigma_{n,k}^{-1}x+x_{n,k},M_{\epsilon_n})=\sigma_{n,k}^{-1}dist(x,\Lambda_{n,k}).
$$
It follows that
\begin{equation}\label{4-1}
\xi_{\epsilon_{n}}\left(\sigma_{n,k}^{-1} x+x_{n,k}\right)=\left\{\begin{array}{ll}{0,} & {\text { if } x \in \Lambda_{n,k}},\\
 {\epsilon_{n}^{-\gamma} \zeta\left(\sigma_{n,k}^{-1} \operatorname{dist}\left(x, \Lambda_{n,k}\right)\right),} & {\text { if } x \notin \Lambda_{n,k}}.
 \end{array}\right.
\end{equation}
Without loss of generality, we assume that $\{x_{n,k}\}_n$ satisfies either $\{x_{n,k}\}_n\subset (\mathcal{M}_{\epsilon_n})^\delta$ or $\{x_{n,k}\}_n\subset\mathbb{R}^N\setminus (M_{\epsilon_n})^\delta$.

We first prove that for $k\in\Lambda_\infty$, $\{x_{n,k}\}\subset (\mathcal{M}_{\epsilon_{n}})^{\sigma_{n,k}^{-1/p}}$. On the contrary, if there is some $i_0\in\Lambda_\infty$ such that $\{x_{n,i_0}\}\subset\mathbb{R}^N\setminus(\mathcal{M}_{\epsilon_{n}})^{\sigma_{n,i_0}^{-1/p}}$. For convenience, we denote $x_{n,i_0}$, $\sigma_{n,i_0}$ and $\Lambda_{n,i_0}$ by $x_n$, $\sigma_n$ and $\Lambda_n$, respectively.

We will deduce a contradiction in this case. In fact, if this happens, then
$$
\lim _{n \rightarrow \infty} \sigma_{n} \operatorname{dist}\left(x_{n}, \partial \mathcal{M}_{\epsilon_{n}}\right)=+\infty,
$$
which implies that for any $R>0$, when $n$ is large enough,
$$
B_R(0)\subset\mathbb{R}^N\setminus (\Lambda_n)^1.
$$
Since $U_{i_0}\nequiv 0$, there exists $R_0>0$ such that
\begin{equation}\label{4-9}
b':=\int_{B_{R_0}(0)}|U_{i_0}|^p>0.
\end{equation}
It follows that
$$
v_{n}=\sigma_{n}^{-\frac{N-p}{p}}\left|u_{n}\right|\left(\sigma_{n}^{-1} \cdot+x_{n}\right) \rightharpoonup\left|U_{i_0}\right| \neq 0 \text { in } \mathcal{D}^{p}.
$$
Then by \eqref{4-9}, we get that when $n$ large enough
\begin{equation}\label{4-11}
\frac{1}{2} b^{\prime} \leq \quad \int_{B_{R_0}(0)} v_{n}^{p}.
\end{equation}
From $Q_{\epsilon_n}(u_n)\leq \eta_M$ and the definition of $\xi_{\epsilon_n}$, we get that, there exists $C=C(k)>0$ such that
\begin{equation}\label{4-12}
\int_{\mathbb{R}^{N} \backslash \mathcal{M}_{\epsilon_{n}}} \zeta\left(\operatorname{dist}\left(x, \mathcal{M}_{\epsilon_{n}}\right)\right) w_{n}^{p} d x \leq C \epsilon_{n}^{\gamma}.
\end{equation}
It follows that
\begin{equation}\label{4-13}
\int_{\mathbb{R}^{N} \backslash \Lambda_{n}} \zeta\left(\sigma_{n}^{-1} \operatorname{dist}\left(x, \Lambda_{n}\right)\right) v_{n}^{p} d x \leq C \epsilon_{n}^{\gamma} \sigma_{n}^{p}.
\end{equation}
By definition of $\zeta$, we get that when $n$ is large enough, for $x\in B_R(0)$,
$$
\zeta(\sigma_n^{-1}dist(x,\Lambda_n))\geq C\sigma_n^{-a},
$$
thus, we get that
$$
C \sigma_{n}^{-a} \quad \int_{B_{R}(0)} v_{n}^{p} \leq \int_{B_{R}(0)} \zeta\left(\sigma_{n}^{-1} \operatorname{dist}\left(x, \Lambda_{n}\right)\right) v_{n}^{p} d x \leq C \epsilon_{n}^{\gamma} \sigma_{n}^{p},
$$
that is
$$
\int_{B_{R}(0)} v_{n}^{p} \leq C \epsilon_{n}^{\gamma} \sigma_{n}^{p+a}.
$$
Combining with \eqref{4-11} that
\begin{equation}\label{4-10}
\epsilon^{-1}\leq\sigma_n^{\frac{p+a}{\gamma}}.
\end{equation}
Thus
$$
-\Delta_p v_n\leq\sigma_n^{\frac{r(N-p)}{p}-N+\left(\frac{p+a}{\gamma}\right)(p^*-r)}v_n^{r-1}.
$$
Since $\gamma>\frac{(p+a)p}{N-p}$, we get that $\frac{r(N-p)}{p}-N+\left(\frac{p+a}{\gamma}\right)(p^*-r)<0$. Let $n\to\infty$, we have by $v_n\rightharpoonup W=|U_{i_0}|$ in $\mathcal{D}^{p}$ that
$$
-\Delta_p W\leq 0~~\hbox{ in } \mathbb{R}^N.
$$
This contradicts $W\geq 0$, $W\nequiv 0$ and $\int_{\mathbb{R}^N}W^{2^*}<\infty$.

Thus, for all $k\in\Lambda_\infty$, $\{x_{n,k}\}\subset(\mathcal{M}_{\epsilon_n})^{\sigma_{n,k}^{-1/p}}$. Now we prove that $\{x_{n,k}\}\subset \mathcal{M}_{\epsilon_n}^\delta$, for any $\delta>0$ and $k\in\Lambda_1$.

For any $\delta>0$, there exists $N$ such that for $n>N$, $\sigma_{n,k}^{-\frac{1}{p}}<\delta$. We deduce that $\{x_{n,k}\}_{k\in\Lambda_1}\subset (\mathcal{M}_{\epsilon_n})^{2\delta}$. In fact, if else, we may assume that up to a subsequence, there is some $i\in\Lambda_1$ such that $\{x_{n,i}\}\in\mathbb{R}^N\setminus(\mathcal{M}_{\epsilon_n})^{2\delta}$ for all $n$. Then we can take $R$ large enough such that $B_R(x_{n,i})\subset\mathbb{R}^N\setminus(\mathcal{M}_{\epsilon_n})^{2\delta}$ and
\begin{equation}\label{addd4-1}
\int_{B_R(x_{n,i})}|U_i(x-x_{n,i})|^{p^*}>C.
\end{equation}
In fact, if there is some $x_0\in B_R(x_{n,i})\cap(\mathcal{M}_{\epsilon_n})^{2\delta}$, by definition, there exists some $y_0$ such that $\epsilon_ny_0\in\mathcal{M}$ and $dist(x_0,y_0)<2\delta$, thus $dist(x_{n,i},y_0)\leq R+2\delta$, and $dist(\epsilon_n x_{n,i}, \mathcal{M})\leq dist(\epsilon_n x_{n,i},\epsilon_n y_0)=\epsilon_n  dist(x_{n,i}, y_0)\leq\epsilon_n (R+2\delta)$.

On the other hand, since $\{x_{n,k}\}\subset(\mathcal{M}_{\epsilon_n})^\delta$ for all $k\in\Lambda_\infty$, by Proposition \ref{prop4-2}, we have that
\begin{align*}
&\int_{B_{R}(x_{n,i})}\left|\sigma^{\frac{N-p}{p}}_{n,k}U_k(\sigma_{n,k}(x-x_{n,k}))\right|^{p^*}\\
\leq&\sigma^N_{n,k}\int_{|x-x_{n,k}|\geq\delta}\frac{1}{\left(1+\sigma_{n,k}|x-x_{n,k}|\right)^{\frac{Np}{p-1}}}=O(\sigma_{n,k}^{-\frac{N}{p-1}}).
\end{align*}
Choose $n$ large enough, such that $|x_{n,k}-x_{n,i}|>>1$, for $k\in \Lambda_1$ and $k\neq i$. By  Lemma \ref{le4-2} and the fact that $\{U_k\}_{k\in\Lambda_1}$ decay exponentially, we deduce that
$$
\int_{B_{R}(x_{n,i})}|u_n|^{p^*}\leq C_{\delta,R}\epsilon_n^{\frac{N\gamma}{N-p}},
$$
$$
\int_{B_{R}(x_{n,i})}|U_k(x-x_{n,k})|^{p^*}=o(1),
$$
combining above, we deduce that
\begin{align*}
&\int_{B_{R}(x_{n,i})}|U_i(x-x_{n,i})|^{p^*}\\
\leq& C\left(\int_{B_{R}(x_{n,i})}|u_n|^{p^*}
+\sum\limits_{k\in\Lambda_1\setminus{i}}|U_k(x-x_{n,k})|^{p^*}+\sum\limits_{k\in\Lambda_\infty}\left|\sigma_{n,k}^{\frac{N-p}{p}}U_k(\sigma_{n,k}(x-x_{n,k}))\right|^{p^*}\right)+o(1)\\
\leq& C(\epsilon_n^{\frac{N\gamma}{N-p}}+\max\limits_{k\in\Lambda_\infty}\{\sigma_{n,k}^{-\frac{N}{p-1}}\})+o(1),
\end{align*}
which is a contradiction with \eqref{addd4-1}. Thus $\{x_{n,k}\}_{k\in\Lambda_1}\subset(\mathcal{M}_{\epsilon_n})^{2\delta}$.
\end{proof}
Let $i_\infty\in\Lambda_\infty$ be such that
$$
\sigma_n:=\sigma_{n,i_\infty}=\min\{\sigma_{n,k}|k\in\Lambda_\infty\}.
$$
Denote
$$
x_n:=x_{n,i_\infty}.
$$
By the proof of \cite{ds1}, we can find a constant $\bar{C}>0$ such that, up to a subsequence, the set
$$
\mathcal{A}_n^1=B_{(\bar{C}+5)\sigma_n^{-\frac{1}{p}}}(x_n)\setminus B_{\bar{C}\sigma_n^{-\frac{1}{p}}}(x_n)
$$
satisfies that
$$
\mathcal{A}_n^1\cap\{x_{n,k}|k\in\Lambda_\infty\}=\emptyset.
$$
We denote
$$
\mathcal{A}_n^2=B_{(\bar{C}+4)\sigma_n^{-\frac{1}{p}}}(x_n)\setminus B_{(\bar{C}+1)\sigma_n^{-\frac{1}{p}}}(x_n),
$$
and
$$
\mathcal{A}_n^3=B_{(\bar{C}+3)\sigma_n^{-\frac{1}{p}}}(x_n)\setminus B_{(\bar{C}+2)\sigma_n^{-\frac{1}{p}}}(x_n).
$$
Then we have the following estimate on $\mathcal{A}_n^2$.
\begin{lemma}\label{adle4-1}
Denote $\{u_n\}$ to be a sequence of solutions as in Lemma \ref{le4-3}. Then there exists a constant $C>0$, independent of $n$, such that
$$
|u_n(x)|\leq C,~~x\in\mathcal{A}_n^2.
$$
\end{lemma}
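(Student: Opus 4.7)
The plan is to combine the profile decomposition of Proposition \ref{prop4-2} with a Moser iteration adapted to the critical exponent. I will first show that $\int_{\mathcal{A}_n^1}|u_n|^{p^*}\to 0$ by inspecting every term of the decomposition on the wider annulus $\mathcal{A}_n^1$; then, using this smallness to absorb the critical nonlinearity, I will run a Moser-type iteration on the interior annulus $\mathcal{A}_n^2\subset\mathcal{A}_n^1$ and extract a uniform $L^\infty$-bound.

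For the smallness step, write
\[u_n=\sum_{k\in\Lambda_1}U_k(\cdot-x_{n,k})+\sum_{k\in\Lambda_\infty}\sigma_{n,k}^{(N-p)/p}U_k(\sigma_{n,k}(\cdot-x_{n,k}))+r_n,\]
and estimate each term on $\mathcal{A}_n^1$. The remainder $r_n\to 0$ in $L^{p^*}(\mathbb{R}^N)$. For $k\in\Lambda_1$, the profile $U_k$ lies in $L^{p^*}(\mathbb{R}^N)$ while $|\mathcal{A}_n^1|=O(\sigma_n^{-N/p})\to 0$, so absolute continuity yields $\int_{\mathcal{A}_n^1}|U_k(\cdot-x_{n,k})|^{p^*}\to 0$. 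For $k\in\Lambda_\infty$, the choice of $\bar C$ after \cite{ds1} places $\mathcal{A}_n^1$ at distance of order $\sigma_n^{-1/p}$ from every center $x_{n,k}$; after the change of variables $y=\sigma_{n,k}(x-x_{n,k})$ the scaled integral reduces to $\int_{\sigma_{n,k}(\mathcal{A}_n^1-x_{n,k})}|U_k|^{p^*}$, whose integration region lies outside $B_{c\sigma_n^{1-1/p}}(0)$ because $\sigma_{n,k}\geq\sigma_n\to\infty$, so this tail also vanishes by integrability of $U_k$.

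For the Moser step, pick $\phi\in C_0^\infty(\mathcal{A}_n^1)$ with $\phi\equiv 1$ on $\mathcal{A}_n^2$ and $|\nabla\phi|\leq C\sigma_n^{1/p}$, and test \eqref{1-8} against $\varphi=u_n|u_{n,T}|^{p(s-1)}\phi^p$ with $u_{n,T}=\min(|u_n|,T)$ and $s\geq 1$, exactly as in the proof of Lemma \ref{le4-2}. The $\xi_{\epsilon_n}$-penalty has the correct sign and is dropped. By Proposition \ref{prop2-1}(5), $|f_{\epsilon_n}(x,u_n)|\leq|u_n|^{p^*-1}+\mu|u_n|^{q-1}$; since $q<p^*$ the subcritical piece is dominated by $1+|u_n|^{p^*-1}$, and on the critical piece H\"older's inequality produces a factor $\bigl(\int_{\mathcal{A}_n^1}|u_n|^{p^*}\bigr)^{(p^*-p)/p^*}$, which by the previous step can be made smaller than any prescribed constant for large $n$, allowing absorption into the Sobolev estimate on the left. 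Iterating on $s=1,\chi,\chi^2,\ldots$ with $\chi=p^*/r>1$ as in Lemma \ref{le4-2} then yields $\|u_n\|_{L^\infty(\mathcal{A}_n^2)}\leq C$ uniformly in $n$. The principal difficulty is keeping the iteration constants independent of $n$ despite the shrinking annulus width $\sigma_n^{-1/p}$: the cut-off cost $\int|u_n|^p|\nabla\phi|^p\leq C\sigma_n\int_{\mathcal{A}_n^1}|u_n|^p$ carries a growing factor $\sigma_n$, but this is offset by the vanishing measure $|\mathcal{A}_n^1|\to 0$ together with the $L^{p^*}$-smallness from the first step and by the scale invariance of the critical Sobolev--nonlinearity balance.
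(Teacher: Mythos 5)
There is a genuine gap at the decisive step, the Moser iteration on the shrinking annulus. The differential inequality $-\Delta_p w_n\le Cw_n^{p^*-1}$ and the norm $\|\cdot\|_{L^{p^*}}$ are both invariant under the scaling $w\mapsto\lambda^{(N-p)/p}w(\lambda\,\cdot)$, while $\|\cdot\|_{L^\infty}$ picks up the factor $\lambda^{(N-p)/p}$. Consequently any estimate of the form $\|w_n\|_{L^\infty(\mathcal{A}_n^2)}\le C(d)\,\|w_n\|_{L^{p^*}(\mathcal{A}_n^1)}$ produced by iterating on annuli of width $d\sim\sigma_n^{-1/p}$ must carry $C(d)\sim d^{-(N-p)/p}=\sigma_n^{(N-p)/p^2}\to\infty$; and since the $L^{p^*}$-smallness you establish in your first step is scale-invariant and comes with no rate, it cannot offset this factor. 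Your closing sentence, which appeals to the vanishing measure of $\mathcal{A}_n^1$ and to ``scale invariance'' to control the cut-off cost, is exactly where the argument breaks: what your scheme actually yields is $\|u_n\|_{L^\infty(\mathcal{A}_n^2)}=o\bigl(\sigma_n^{(N-p)/p^2}\bigr)$, not $O(1)$. The uniform bound needs the extra, scale-breaking input that the paper imports from \cite{zll} (and ultimately \cite{ds1}): the pointwise decay of the concentration profiles shows that each rescaled bubble $\sigma_{n,k}^{(N-p)/p}U_k(\sigma_{n,k}(x-x_{n,k}))$ is already bounded by a constant on $\mathcal{A}_n^2$, because $|x-x_{n,k}|\ge\sigma_n^{-1/p}$ and $\sigma_{n,k}\ge\sigma_n$ give $\sigma_{n,k}^{(N-p)/p}\bigl(\sigma_{n,k}\sigma_n^{-1/p}\bigr)^{-(N-p)/(p-1)}\le C$; the comparison machinery of \cite{zll} (Lemmas A.4--A.8, Proposition A.5) is then required to transfer this pointwise control to $u_n$ itself, i.e.\ to handle the remainder $r_n$, which is small only in $L^{p^*}$.

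A secondary, fixable slip: the construction of $\bar C$ only guarantees $\mathcal{A}_n^1\cap\{x_{n,k}\,:\,k\in\Lambda_\infty\}=\emptyset$, not that $\mathcal{A}_n^1$ keeps a distance of order $\sigma_n^{-1/p}$ from every center; a center may sit on $\partial\mathcal{A}_n^1$, in which case $\int_{\mathcal{A}_n^1}|u_n|^{p^*}$ captures a fixed fraction of that bubble's mass and does not tend to $0$. The smallness claim is correct on any annulus inset from $\mathcal{A}_n^1$ by a fixed multiple of $\sigma_n^{-1/p}$ (for instance on $\mathcal{A}_n^2$), and it is on such an inset annulus that your first step should be carried out.
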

\begin{proof}Let $w_n=|u_n|$, then
$$
-\Delta_p w_n+V(\epsilon_n x)w_n+\lambda_n\xi_{\epsilon_n}w_n^{p-1}=f_{\epsilon_n}(x,w_n),
$$
thus
$$
-\Delta_p w_n\leq Cw_n^{p^*-1}.
$$
For $k\in\Lambda_1$
$$
-\Delta_p|U_k|+|U_k|^{p-1}\leq |U_k|^{p^*-1}.
$$
For $k\in\Lambda_\infty$
$$
-\Delta_p|U_k|\leq |U_k|^{p^*-1}.
$$
By Proposition 3.2, Lemma A.6, Lemma A.8, Corollary A.4 and Proposition A.5 in \cite{zll}, we deduce that $w_n(x)\leq C$ for $x\in\mathcal{A}_n^2$ as Proposition 4.1 in \cite{zll}.
\end{proof}
\begin{lemma}\label{adle4-2}
It holds that
$$
\int_{\mathcal{A}_n^3}|\nabla u_n|^p\leq C\sigma_n^{-\frac{N-p}{p}}.
$$
\end{lemma}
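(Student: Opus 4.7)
The strategy is a Caccioppoli-type energy estimate on the thin annular shell $\mathcal{A}_n^3$, with the uniform pointwise bound $|u_n| \leq C$ on the fatter shell $\mathcal{A}_n^2$ from Lemma \ref{adle4-1} as the key input. The geometric observation is that $\mathcal{A}_n^3$ is separated from the complement of $\mathcal{A}_n^2$ by a buffer of width $\sim \sigma_n^{-1/p}$ on each side, so there exists a radial cutoff $\phi \in C_0^\infty(\mathcal{A}_n^2)$ with $\phi \equiv 1$ on $\mathcal{A}_n^3$ and $\|\nabla \phi\|_\infty \leq C \sigma_n^{1/p}$.

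Plug $\varphi = u_n \phi^p$ into the weak form \eqref{1-8} of equation \eqref{1-9}. Expanding $\nabla \varphi = \phi^p \nabla u_n + p\, u_n \phi^{p-1} \nabla \phi$, dropping the nonnegative contributions from $V(\epsilon_n x)|u_n|^p \phi^p$ and $\lambda_n \xi_{\epsilon_n}|u_n|^p \phi^p$, and absorbing the cross term $p\int |\nabla u_n|^{p-2}\nabla u_n \cdot u_n \phi^{p-1}\nabla \phi$ by Young's inequality $|ab| \leq \tfrac{1}{2}a^{p/(p-1)} + C b^{p}$ yields
\begin{equation*}
\tfrac{1}{2} \int_{\mathbb{R}^N} |\nabla u_n|^p \phi^p \, dx \;\leq\; \int_{\mathcal{A}_n^2} f_{\epsilon_n}(x, u_n)\, u_n\, \phi^p \, dx + C \int_{\mathcal{A}_n^2} |u_n|^p |\nabla \phi|^p \, dx.
\end{equation*}

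By Lemma \ref{adle4-1} and Proposition \ref{prop2-1}(5), $|f_{\epsilon_n}(x, u_n) u_n| \leq |u_n|^{p^*} + \mu|u_n|^q \leq C$ on $\mathcal{A}_n^2$, and obviously $|u_n|^p \leq C$ there. Since both the inner and outer radii defining $\mathcal{A}_n^2$ are comparable to $\sigma_n^{-1/p}$, the volume satisfies $|\mathcal{A}_n^2| \leq C \sigma_n^{-N/p}$. Hence the first integral on the right is at most $C \sigma_n^{-N/p}$, while the second is bounded by $C \cdot \sigma_n \cdot \sigma_n^{-N/p} = C \sigma_n^{-(N-p)/p}$. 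Because $\sigma_n \geq 1$ for $n$ large we have $\sigma_n^{-N/p} \leq \sigma_n^{-(N-p)/p}$, so the latter dominates and we obtain $\int_{\mathcal{A}_n^3} |\nabla u_n|^p \leq C \sigma_n^{-(N-p)/p}$.

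There is essentially no substantive obstacle: the only point requiring attention is the bookkeeping for the cutoff, namely that $\phi$ can be constructed as a standard radial bump with $|\nabla \phi| \lesssim \sigma_n^{1/p}$ supported in transition strips of measure $O(\sigma_n^{-N/p})$. Once that and the $L^\infty$ bound from the preceding lemma are in place, the estimate is immediate.
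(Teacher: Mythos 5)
Your proof is correct, and it is the standard Caccioppoli-type argument that the paper itself invokes by reference to Corollary 4.3 of \cite{zll}: a cutoff supported in $\mathcal{A}_n^2$ with $|\nabla\phi|\leq C\sigma_n^{1/p}$, the test function $u_n\phi^p$, Young's inequality on the cross term, and the $L^\infty$ bound of Lemma \ref{adle4-1} together with the volume bound $|\mathcal{A}_n^2|\leq C\sigma_n^{-N/p}$. The bookkeeping ($C\sigma_n\cdot\sigma_n^{-N/p}=C\sigma_n^{-(N-p)/p}$ dominating $C\sigma_n^{-N/p}$ since $\sigma_n\to\infty$) is exactly right.
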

\begin{proof}
The proof is similar to Corollary 4.3 in \cite{zll}, thus we omit it here.
\end{proof}

\begin{lemma}\label{le4-4}
Let $B_n=B_{(\bar{C}+2.7)\sigma_n^{-\frac{1}{p}}}(x_n)$, and $\phi\geq 0$ in $\mathbb{R}$, $\phi(t)=0$ if $t\geq(\bar{C}+2.7)\sigma_n^{-\frac{1}{p}}$, $\phi(t)=1$ for $t\leq (\bar{C}+2.1)\sigma_n^{-\frac{1}{p}}$, $\phi'(t)\leq 0$ and $|\phi'(t)|\leq C\sigma_n^{\frac{1}{p}}$ for all $t$. Let $\varphi(x)=\phi(|x-x_n|)$. Then there exists $C>0$ independent of $n$ such that
\begin{equation}\label{ad4-10}
\lambda_n\int_{B_n}|\nabla \xi_{\epsilon_n}||u_n|^p\varphi\leq C\sigma_n^{-\frac{N}{p}+\frac{p+1}{p}},
\end{equation}
and
\begin{equation}\label{ad4-11}
\lambda_n\int_{B_n}\xi_{\epsilon_n}|u_n|^p\varphi\leq C\sigma_n^{-\frac{N-p}{p}}.
\end{equation}
\end{lemma}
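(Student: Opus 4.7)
\emph{Plan.} The plan is to derive a local Pohozaev-type identity for $u_n$ on $B_n$ in which both target quantities naturally appear, and then to bound the remaining terms using the annular estimates of Lemmas \ref{adle4-1}--\ref{adle4-2} together with the limiting critical Sobolev structure of the dominant profile $U_{i_\infty}$ identified by Lemma \ref{le4-3} and Proposition \ref{prop4-2}.

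Concretely, the first step is to test \eqref{1-9} against the Pohozaev multiplier $(x-x_n)\cdot\nabla u_n\,\varphi$ and integrate by parts. The $p$-Laplacian term contributes $\frac{N-p}{p}\int|\nabla u_n|^p\varphi$ plus boundary-type pieces supported in $\operatorname{supp}\nabla\varphi\subset\mathcal{A}_n^3$; the penalization term contributes exactly
\[
-\frac{N}{p}\lambda_n\int\xi_{\epsilon_n}|u_n|^p\varphi-\frac{1}{p}\lambda_n\int(x-x_n)\cdot\nabla\xi_{\epsilon_n}|u_n|^p\varphi,
\]
in which both desired integrals appear; and the nonlinearity contributes $-\frac{N}{p^{*}}\int|u_n|^{p^{*}}\varphi$ (up to the $m_{\epsilon_n}$ truncation), together with $V$-dependent and subcritical pieces.

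The second step is to control the error terms by $C\sigma_n^{-(N-p)/p}$. The boundary-type pieces involving $\nabla\varphi$ are handled by Lemma \ref{adle4-1} ($|u_n|\le C$ on $\mathcal{A}_n^2\supset\operatorname{supp}\nabla\varphi$), Lemma \ref{adle4-2} ($\int_{\mathcal{A}_n^3}|\nabla u_n|^p\le C\sigma_n^{-(N-p)/p}$), and $\|\nabla\varphi\|_\infty\le C\sigma_n^{1/p}$; a direct H\"older computation shows that the exponents combine to give exactly $\sigma_n^{-(N-p)/p}$. The critical pieces $\frac{N-p}{p}\int|\nabla u_n|^p\varphi$ and $\frac{N}{p^{*}}\int|u_n|^{p^{*}}\varphi$ cancel at leading order because the dominant profile on $B_n$ is the rescaled bubble $\sigma_n^{(N-p)/p}U_{i_\infty}(\sigma_n(\cdot-x_n))$ and $U_{i_\infty}$ satisfies the critical limiting equation $-\Delta_pU_{i_\infty}\le U_{i_\infty}^{p^{*}-1}$ with equality up to the residue of the decomposition; the hypothesis $q>p_N=\max\{p,p^{*}-1\}$ is used precisely to ensure that the subcritical $\mu|u_n|^q$ contribution, after the change of variables $y=\sigma_n(x-x_n)$, is controlled by $C\sigma_n^{-(N-p)/p}$, and a similar scaling handles the $V$-contribution using the exponential decay of $U_{i_\infty}$.

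Reading off the resulting identity proves \eqref{ad4-11}; the companion estimate \eqref{ad4-10} follows either by running the argument with $|\nabla\xi_{\epsilon_n}|$ in place of $\xi_{\epsilon_n}$, or by combining \eqref{ad4-11} with the pointwise bound $|\nabla\xi_{\epsilon_n}|\le C\sigma_n^{1/p}\xi_{\epsilon_n}$ on $B_n$, which is a consequence of Lemma \ref{le4-3} together with the $C^1$-smoothness of $\zeta$. The main obstacle is quantifying the leading-order Pohozaev cancellation: the individual quantities $\int|\nabla u_n|^p\varphi$ and $\int|u_n|^{p^{*}}\varphi$ are each $O(1)$, so the profile decomposition must be exploited carefully along the lines of the analogous critical $p$-Laplacian analysis in \cite{zll} to isolate subleading terms of size exactly $O(\sigma_n^{-(N-p)/p})$.
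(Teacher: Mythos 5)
Your approach has a genuine gap, and in fact inverts the logical order of the paper's argument. The paper does \emph{not} use the dilation Pohozaev multiplier $(x-x_n)\cdot\nabla u_n\,\varphi$ here; it multiplies \eqref{1-9} by $(t\cdot\nabla u_n)\varphi$ with a \emph{constant} vector $t$, and then chooses $t=\nabla V(x^*)$ where $x^*=\lim\epsilon_nx_n\in\partial\mathcal{M}$. This translation-type identity has no bulk terms at all from the $p$-Laplacian or the nonlinearity: everything except the $\nabla V$, $\nabla\xi_{\epsilon_n}$ and $\nabla_xF_{\epsilon_n}$ terms is a divergence, so the right-hand side is supported entirely in $D_n\subset\mathcal{A}_n^2\cap\mathcal{A}_n^3$ and is bounded by $C\sigma_n^{1/p}\sigma_n^{-(N-p)/p}$ via Lemmas \ref{adle4-1}--\ref{adle4-2}. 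The point of $(V_2)$ is that $\nabla V(x^*)\cdot\nabla\xi_{\epsilon_n}\geq c_0|\nabla\xi_{\epsilon_n}|$ and $\nabla V(x^*)\cdot\nabla V(\epsilon_nx)>0$ on $B_n$, so the left-hand side dominates $\lambda_n\int|\nabla\xi_{\epsilon_n}||u_n|^p\varphi$ with the correct sign, giving \eqref{ad4-10} directly. Your dilation multiplier destroys both features: it produces the $O(1)$ bulk terms $\frac{N-p}{p}\int|\nabla u_n|^p\varphi$ and $\frac{N}{p^*}\int|u_n|^{p^*}\varphi$, whose cancellation to the precision $O(\sigma_n^{-(N-p)/p})$ you acknowledge as "the main obstacle" but do not establish; and it yields $(x-x_n)\cdot\nabla\xi_{\epsilon_n}$, which has no definite sign and cannot control $|\nabla\xi_{\epsilon_n}|$, so even a perfect cancellation would not produce \eqref{ad4-10}. (The genuine dilation Pohozaev identity is Lemma \ref{le4-5}, used \emph{after} this lemma precisely because \eqref{ad4-10}--\eqref{ad4-11} are needed to control its $\xi_{\epsilon_n}$ terms.)

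Your fallback derivation of \eqref{ad4-10} from \eqref{ad4-11} via "$|\nabla\xi_{\epsilon_n}|\le C\sigma_n^{1/p}\xi_{\epsilon_n}$" goes in the wrong direction and is false: the hypothesis on $\zeta$ is the lower bound $t\zeta'(t)\geq C\zeta(t)$, which gives $\xi_{\epsilon_n}\le C\,\mathrm{dist}(x,\mathcal{M}_{\epsilon_n})|\nabla\xi_{\epsilon_n}|\le C\sigma_n^{-1/p}|\nabla\xi_{\epsilon_n}|$ on $B_n$ (using $x_n\in(\mathcal{M}_{\epsilon_n})^{\sigma_n^{-1/p}}$ from Lemma \ref{le4-3}); there is no matching upper bound on $\zeta'/\zeta$, which blows up near $\mathrm{dist}=0$ (e.g.\ for $\zeta(t)\sim t^a$). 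The correct logic is therefore: first prove \eqref{ad4-10} by the translation identity with $t=\nabla V(x^*)$ (using also a separate test with $u_n\psi$ on the annulus to bound $\lambda_n\int_{D_n}\xi_{\epsilon_n}|u_n|^p$, needed for the $\nabla\varphi$-terms), and only then deduce \eqref{ad4-11} from \eqref{ad4-10} via $\xi_{\epsilon_n}\le C\sigma_n^{-1/p}|\nabla\xi_{\epsilon_n}|$.
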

\begin{proof}
For all $t\in\mathbb{R}^N$, multiplying both side of \eqref{1-9}  by $(t\cdot\nabla u_n)\varphi$ and integrating in $B_n$, we get that
\begin{equation}\label{4-14}
\begin{aligned}
&\frac{\epsilon_n}{p}\int_{B_n}(t,\nabla V(\epsilon_n x))|u_n|^p\varphi+\frac{\lambda_n}{p}\int_{B_n}(t,\nabla \xi_{\epsilon_n})|u_n|^p\varphi-\int_{B_n}\left(t,\nabla_xF_{\epsilon_n}(x,u_n)\right)\varphi\\
=&\int_{D_n}|\nabla u_n|^{p-2}(t,\nabla u_n)(\nabla u_n,\nabla \varphi)-\frac{1}{p}\int_{D_n}|\nabla u_n|^p(t,\nabla \varphi)\\
&-\frac{1}{p}\int_{D_n}\left(V(\epsilon_n x)+\lambda_n\xi_{\epsilon_n}\right)|u_n|^p(t,\nabla \varphi)+\int_{D_n}F_{\epsilon_n}(x,u_n)(t,\nabla \varphi),
\end{aligned}
\end{equation}
where $$
D_n=\left\{x|(\bar{C}+2.7)\sigma_n^{-\frac{1}{p}}\geq|x-x_n|\geq(\bar{C}+2.1)\sigma_n^{-\frac{1}{p}}\right\}.
$$
 By Lemma \ref{le4-3}, $x_n\in(\mathcal{M}_{\epsilon_n})^{\sigma_n^{-\frac{1}{p}}}$, therefore, $x^*=\lim\limits_{n\to\infty}\epsilon_nx_n\in\overline{\mathcal{M}}$. If $B_n\subset\mathcal{M}_{\epsilon_n}$, then $\xi_{\epsilon_n}=0$ and $\nabla\xi_{\epsilon_n}=0$ in $B_n$, the inequalities hold naturally. Thus we may assume $B_n\cap(\mathcal{M}_{\epsilon_n})^C\neq\emptyset$, in this case, we notice that $x^*\in\partial \mathcal{M}$, taking into consideration that $\mathcal{A}\subset\mathcal{M}$, $\mathcal{A}$ is compact, we have $|\nabla  V(x^*)|> 0$. Thus for all $x\in B_n$, for $n$ large, we obtain
 \begin{equation}\label{4-15}
 \nabla V(\epsilon_n x)\cdot\nabla V(x^*)\geq\frac{1}{2}|\nabla V(x^*)|^2>0.
 \end{equation}
Besides, since
 \begin{equation}\label{4-16}
\nabla \xi_{\epsilon_{n}}(x)=\left\{\begin{array}{ll}
{0,} & {\text { if } x \in \mathcal{M}_{\epsilon_{n}}}, \\ {\epsilon_{n}^{-\gamma} \zeta^{\prime}\left(\operatorname{dist}\left(x, \mathcal{M}_{\epsilon_{n}}\right)\right) \nabla \operatorname{dist}\left(x, \mathcal{M}_{\epsilon_{n}}\right),} & {\text { if } x \in \mathbb{R}^{N} \backslash \mathcal{M}_{\epsilon_{n}}},
\end{array}\right.
\end{equation}
 we have, by assumption $(V_2)$, that for all $x\in B_n\setminus\mathcal{M}_{\epsilon_n}$,
 \begin{equation}\label{4-17}
 \nabla V(x^*)\cdot\frac{\nabla\xi_{\epsilon_n}(x)}{|\nabla\xi_{\epsilon_n}(x)|}=\nabla V(x^*)\cdot\frac{\nabla dist(x,\mathcal{M}_{\epsilon_n})}{|\nabla dist(x,\mathcal{M}_{\epsilon_n})|}\geq c_0>0.
 \end{equation}
 Moreover, by Proposition \ref{prop2-1}, we have for $x\in B_n$,
 \begin{equation}\label{4-18}
 -\int_{B_n}\left(\nabla V(x^*),\nabla_xF_{\epsilon_n}(x,u_n)\right)\varphi\geq 0.
 \end{equation}
 Thus taking $t=\nabla V(x*)$ in \eqref{4-14}, we obtain
 \begin{equation}\label{4-19}
\begin{aligned}
& \frac{c_0}{2p}\lambda_{n} \int_{B_{n}}\left|\nabla \xi_{\epsilon_{n}}\right| |u_{n}|^{p} \varphi \\
\leq & \frac{\lambda_{n}}{p} \int_{B_{n}}\left(\nabla V(x^*) \cdot \nabla \xi_{\epsilon_{n}}\right) |u_{n}|^{p} \varphi\\
\leq& \frac{\epsilon_{n}}{p} \int_{B_{n}}\left(\nabla V(x^*) \cdot\nabla V(\epsilon_{n} x)\right) |u_{n}|^{p} \varphi+\frac{\lambda_{n}}{p} \int_{B_{n}}\left(\nabla V(x^*) \cdot \nabla \xi_{\epsilon_{n}}\right) |u_{n}|^{p} \varphi\\
\leq&\int_{D_{n}}|\nabla u_n|^{p-2}\left(\nabla u_{n} \cdot \nabla \varphi\right)\left(\nabla V(x^*) \cdot \nabla u_{n}\right)-\frac{1}{p} \int_{D_{n}}\left|\nabla u_{n}\right|^{p}(\nabla V(x^*)  \cdot \nabla \varphi)\\
&+\int_{D_{n}} F_{\epsilon_{n}}(x,u_n)(\nabla V(x^*)  \cdot \nabla \varphi)
-\frac{1}{p} \int_{D_{n}}\left(V\left(\epsilon_{n} x\right)+\lambda_{n} \xi_{\epsilon_{n}}\right)(\nabla V(x^*) \cdot \nabla \varphi) |u_{n}|^{p}\\
\leq&C\sigma_n^{\frac{1}{p}}\left(\int_{D_n}|\nabla u_n|^p+F_{\epsilon_n}(x,u_n)+\frac{1}{p}\left(V(\epsilon_n x)+\lambda_n\xi_{\epsilon_n}\right)|u_n|^p\right).
\end{aligned}
\end{equation}

On the other hand, let $\psi \in C_{0}^{\infty}\left(\mathbb{R}^{N}\right)$ be such that $\psi \geq 0$ in $\mathbb{R}^{N}, \psi(x)=1$ for $(\bar{C}+2.7) \sigma_{n}^{-\frac{1}{p}} \geq\left|x-x_{n}\right| \geq$
$(\bar{C}+2.1) \sigma_{n}^{-\frac{1}{p}}, \psi(x)=0$ for $\left|x-x_{n}\right| \geq(\bar{C}+3) \sigma_{n}^{-\frac{1}{p}}$ or $\left|x-x_{n}\right| \leq(\bar{C}+2) \sigma_{n}^{-\frac{1}{p}}$ and
$|\nabla \psi| \leq C \sigma_{n}^{\frac{1}{p}}$. Multiplying both sides of \eqref{1-9} by $u_{n} \psi$ and integrating in $B_{n},$ we get
that
$$
\int_{\mathbb{R}^{N}} \psi\left|\nabla u_{n}\right|^{p}+\int_{\mathbb{R}^{N}} u_{n}|\nabla u_n|^{p-2}\left(\nabla u_{n} \cdot \nabla \psi\right)+\int_{\mathbb{R}^{N}}\left(V\left(\epsilon_{n} x\right)+\lambda_{n} \xi_{\epsilon_{n}}\right)|u_{n}|^{p} \psi=\int_{\mathbb{R}^{N}} f_{\epsilon_{n}}\left(x,u_{n}\right) u_{n} \psi.
$$
This together with Lemma \ref{adle4-1}, Lemma \ref{adle4-2} implies that
\begin{equation}\label{4-20}
\begin{aligned}
 &\lambda_{n} \int_{D_n} \xi_{\epsilon_{n}} |u_{n}|^{p}\\
 \leq & \int_{\mathcal{A}_{n}^{3}} V\left(\epsilon_{n} x\right) |u_{n}|^{p}+\left|\nabla u_{n}\right|^{p} +\left|u_{n}\right|^{p}|\nabla \psi|^{p}+\left|\nabla u_{n}\right|^{p}+\left|f_{\epsilon_{n}}\left(x,u_{n}\right) u_{n}\right| \\
  \leq& C \sigma_{n}^{-(N-p) / p}.
\end{aligned}
\end{equation}
By \eqref{4-19} and \eqref{4-20}, we get \eqref{ad4-10}.

From Lemma \ref{le4-3}, it follows that for $x\in B_n\setminus\mathcal{M}_{\epsilon_n}$,
$$
dist\left(x,\mathcal{M}_{\epsilon_n}\right)\leq C\sigma_n^{-\frac{1}{p}},
$$
thus
\begin{equation}\label{4-21}
\begin{aligned} \chi_{\epsilon_{n}}(x) &=\epsilon_{n}^{-\gamma} \zeta\left(\operatorname{dist}\left(x, \mathcal{M}_{\epsilon_{n}}\right)\right) \\ & \leq C \epsilon_{n}^{-\gamma} \operatorname{dist}\left(x, \mathcal{M}_{\epsilon_{n}}\right) \zeta^{\prime}\left(\operatorname{dist}\left(x, \mathcal{M}_{\epsilon_{n}}\right)\right) \\ &=C \epsilon_{n}^{-\gamma} \operatorname{dist}\left(x, \mathcal{M}_{\epsilon_{n}}\right) \zeta^{\prime}\left(\operatorname{dist}\left(x, \mathcal{M}_{\epsilon_{n}}\right)\right)\left|\nabla \operatorname{dist}\left(x, \mathcal{M}_{\epsilon_{n}}\right)\right| \\ &=C \operatorname{dist}\left(x, \mathcal{M}_{\epsilon_{n}}\right)\left|\nabla \chi_{\epsilon_{n}}(x)\right| \\ & \leq C \sigma_{n}^{-\frac{1}{p}}\left|\nabla \chi_{\epsilon_{n}}(x)\right|. \end{aligned}
\end{equation}
By \eqref{4-21} and \eqref{ad4-10}, we get \eqref{ad4-11}.
\end{proof}
We have the following Pohozaev identity:
\begin{lemma}\label{le4-5}
\begin{equation}\label{4-22}
\begin{aligned}
&\int_{B_n}\left(NF_{\epsilon_n}(x,u_n)-\frac{N-p}{p}u_nf_{\epsilon_n}(x,u_n)\right)\varphi+\int_{B_n}\left(x-x_n,\nabla_xF_{\epsilon_n}(x,u_n)\right)\varphi\\
&-\int_{B_n}\left(V(\epsilon_n x)+\lambda_n\xi_{\epsilon_n}\right)|u_n|^p\varphi-\frac{\epsilon_n}{p}\int_{B_n}\left(x-x_n,\nabla V(\epsilon_n x)\right)|u_n|^p\varphi\\
&-\frac{\lambda_n}{p}\int_{B_n}\left(x-x_n,\nabla\xi_{\epsilon_n}\right)|u_n|^p\varphi\\
=&\frac{1}{p}\int_{B_n}|\nabla u_n|^p\left(x-x_n,\nabla\varphi\right)-\int_{B_n}|\nabla u_n|^{p-2}\left(x-x_n,\nabla u_n\right)\left(\nabla u_n,\nabla\varphi\right)\\
&-\frac{N-p}{p}\int_{B_n}|\nabla u_n|^{p-2}u_n\left(\nabla u_n,\nabla\varphi\right)+\frac{1}{p}\int_{B_n}\left(V(\epsilon_n x)+\lambda_n\xi_{\epsilon_n}\right)\left(x-x_n,\nabla \varphi\right)|u_n|^p\\
&-\int_{B_n}F_{\epsilon_n}(x,u_n)\left(x-x_n,\nabla \varphi\right),
\end{aligned}
\end{equation}
where $B_n$ and $\varphi$ are defined as in Lemma \ref{le4-4}.
\end{lemma}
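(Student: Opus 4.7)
The plan is to derive \eqref{4-22} by taking two independent test functions in the weak formulation of \eqref{1-9} and forming an appropriate linear combination.

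First I would test \eqref{1-9} against the Pohozaev-type function $((x-x_n)\cdot\nabla u_n)\varphi$ and integrate over $B_n$ (no boundary terms appear since $\varphi$ is compactly supported inside $B_n$). The key manipulations are:
(a) For the $p$-Laplacian piece, I use $|\nabla u_n|^{p-2}\nabla u_n\cdot\partial_j\nabla u_n=\tfrac{1}{p}\partial_j|\nabla u_n|^p$ and integrate by parts to get $\tfrac{p-N}{p}\int|\nabla u_n|^p\varphi$, together with the two $\nabla\varphi$-terms on the right-hand side of \eqref{4-22}.
(b) For the potential-type pieces $V(\epsilon_n x)|u_n|^{p-2}u_n$ and $\lambda_n\xi_{\epsilon_n}|u_n|^{p-2}u_n$, I write $|u_n|^{p-2}u_n\nabla u_n=\tfrac{1}{p}\nabla(|u_n|^p)$ and integrate by parts. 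Differentiating $(x-x_n)\varphi V(\epsilon_n x)$ produces the $-\tfrac{N}{p}\int V|u_n|^p\varphi$ term, the potential-gradient term $-\tfrac{\epsilon_n}{p}\int(x-x_n,\nabla V(\epsilon_n x))|u_n|^p\varphi$, and the $\nabla\varphi$ boundary piece; the same procedure with $\lambda_n\xi_{\epsilon_n}$ gives the $\lambda_n$-counterparts.
(c) For the right-hand side, I use $f_{\epsilon_n}(x,u_n)\nabla u_n=\nabla[F_{\epsilon_n}(x,u_n(x))]-\nabla_x F_{\epsilon_n}(x,u_n)$; integration by parts on the first piece against $(x-x_n)\varphi$ gives $-N\int F_{\epsilon_n}\varphi$ plus the $\nabla\varphi$-term, while the second piece is exactly the $\int(x-x_n,\nabla_x F_{\epsilon_n})\varphi$ term in \eqref{4-22}.

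Second, I would test \eqref{1-9} against $u_n\varphi$ to obtain the Nehari identity
\begin{equation*}
\int_{B_n}|\nabla u_n|^p\varphi+\int_{B_n} u_n|\nabla u_n|^{p-2}(\nabla u_n,\nabla\varphi)+\int_{B_n}(V(\epsilon_n x)+\lambda_n\xi_{\epsilon_n})|u_n|^p\varphi=\int_{B_n}f_{\epsilon_n}(x,u_n)u_n\varphi.
\end{equation*}

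Adding $\tfrac{N-p}{p}$ times this Nehari identity to the Pohozaev identity produced in the first step, the $\int|\nabla u_n|^p\varphi$ terms cancel (since $\tfrac{p-N}{p}+\tfrac{N-p}{p}=0$) and the coefficient of $\int(V+\lambda_n\xi_{\epsilon_n})|u_n|^p\varphi$ becomes $-\tfrac{N}{p}+\tfrac{N-p}{p}=-1$, giving exactly the $-\int(V+\lambda_n\xi_{\epsilon_n})|u_n|^p\varphi$ term in the statement. The Nehari identity also contributes the $\tfrac{N-p}{p}\int u_n|\nabla u_n|^{p-2}(\nabla u_n,\nabla\varphi)$ term and the $-\tfrac{N-p}{p}\int u_n f_{\epsilon_n}(x,u_n)\varphi$ term. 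Rearranging so that all pieces involving $\nabla\varphi$ lie on the right-hand side yields \eqref{4-22}.

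The main obstacle is purely technical: a weak solution $u_n$ of a $p$-Laplacian equation is only $C^{1,\alpha}_{\mathrm{loc}}$, so the pointwise identity $|\nabla u|^{p-2}\nabla u\cdot\partial_j\nabla u=\tfrac{1}{p}\partial_j|\nabla u|^p$ used in step (a) is not immediately justifiable in the classical sense. The standard remedy is a regularization argument: approximate $u_n$ by smooth functions (or equivalently, consider solutions of the regularized equation with $|\nabla u|^p$ replaced by $(\delta+|\nabla u|^2)^{p/2}$ and send $\delta\to 0$), derive the identity at the regularized level where everything is smooth, and pass to the limit using the cut-off $\varphi$ to localize all estimates in a fixed bounded set. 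This approximation scheme is by now standard for Pohozaev identities of the $p$-Laplacian and is used in the same form as in \cite{zll}, so I would invoke that framework rather than reproducing the approximation argument in detail; the genuinely new content here is the algebraic bookkeeping explained above.
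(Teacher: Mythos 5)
Your derivation is correct: testing \eqref{1-9} against $((x-x_n)\cdot\nabla u_n)\varphi$ and against $u_n\varphi$, combining with the weight $\tfrac{N-p}{p}$, and checking that the coefficients of $\int|\nabla u_n|^p\varphi$ and $\int(V+\lambda_n\xi_{\epsilon_n})|u_n|^p\varphi$ become $0$ and $-1$ respectively reproduces \eqref{4-22} exactly, and you rightly flag the $C^{1,\alpha}$ regularity obstruction and resolve it by the standard regularization scheme. This is precisely the argument of Lemma 5.1 in \cite{zll}, which is all the paper itself invokes, so your proposal matches the paper's (omitted) proof.
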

\begin{proof} The proof is very similar to that of Lemma 5.1 in \cite{zll}, we omit it here.
\end{proof}
\begin{proposition}\label{prop4-3}
If $\max\{p^*-1,p\}<q<p^*$ holds, then $\Lambda_\infty=\emptyset$.
\end{proposition}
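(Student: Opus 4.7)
My plan is to argue by contradiction using the local Pohozaev identity of Lemma~\ref{le4-5}, exploiting the fact that the subcritical perturbation $\mu|u|^{q-2}u$ with $q<p^{*}$ supplies a strictly positive Pohozaev surplus that cannot be absorbed by the cut-off and penalization errors. Suppose $\Lambda_\infty\neq\emptyset$, pick $i_\infty\in\Lambda_\infty$ realizing $\sigma_n:=\sigma_{n,i_\infty}=\min\{\sigma_{n,k}:k\in\Lambda_\infty\}$, and set $x_n:=x_{n,i_\infty}$; by Lemma~\ref{le4-3}, $x_n\in(\mathcal{M}_{\epsilon_n})^{\sigma_n^{-1/p}}$, so $\operatorname{dist}(\epsilon_n x,\mathcal{M})=o(1)$ uniformly on $B_n=B_{(\bar{C}+2.7)\sigma_n^{-1/p}}(x_n)$. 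From $N/p^{*}=(N-p)/p$ together with property~(7) of Proposition~\ref{prop2-1}, dropping the two non-negative correction terms gives the pointwise bound
$$
NF_{\epsilon_n}(x,u_n)-\tfrac{N-p}{p}\,u_nf_{\epsilon_n}(x,u_n)\;\geq\;N\mu\bigl(\tfrac{1}{q}-\tfrac{1}{p^{*}}\bigr)|u_n|^{r}\,|m_{\epsilon_n}(x,u_n)|^{q-r}.
$$

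I then apply Lemma~\ref{le4-5} with the cut-off $\varphi$ of Lemma~\ref{le4-4} and estimate every remaining term. The right-hand side is supported in $D_n\subset\mathcal{A}_n^{2}$, where $|x-x_n|\leq C\sigma_n^{-1/p}$, $|\nabla\varphi|\leq C\sigma_n^{1/p}$, $|u_n|\leq C$ (Lemma~\ref{adle4-1}) and $\int_{\mathcal{A}_n^{3}}|\nabla u_n|^{p}\leq C\sigma_n^{-(N-p)/p}$ (Lemma~\ref{adle4-2}); combined with Lemma~\ref{le4-4}, the total right-hand side is $O(\sigma_n^{-(N-p)/p})$. For the remaining left-hand side terms I use: $\lambda_n\int_{B_n}\xi_{\epsilon_n}|u_n|^{p}\varphi\leq C\sigma_n^{-(N-p)/p}$ and its gradient counterpart $\lambda_n\int_{B_n}(x-x_n,\nabla\xi_{\epsilon_n})|u_n|^{p}\varphi$, both from Lemma~\ref{le4-4}; the $V$-volume term $\int V(\epsilon_n x)|u_n|^{p}\varphi\leq C\sigma_n^{-p}$ by rescaling $\omega_n(y):=\sigma_n^{-(N-p)/p}u_n(\sigma_n^{-1}y+x_n)$ and using $U_{i_\infty}\in\mathcal{D}^{p}$; the $\nabla V$-term gains the extra factor $\epsilon_n$; and the $\nabla_x F_{\epsilon_n}$-term is controlled via property~(8) of Proposition~\ref{prop2-1}, which supplies both an $\epsilon_n$ prefactor and an integrable weight.

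For the positive main term I use the rescaling $\omega_n\rightharpoonup U_{i_\infty}\not\equiv 0$ in $\mathcal{D}^{p}$. Because $\operatorname{dist}(\epsilon_n x,\mathcal{M})=o(1)$ on $B_n$, property~(4) of Proposition~\ref{prop2-1} gives $|m_{\epsilon_n}(x,u_n)|\geq c\min\{|u_n|,\epsilon_n^{-1}\}$, and localizing $y$ to a fixed ball $\{|y|\leq R_0\}$ in the rescaled variables where the bulk of $|U_{i_\infty}|^{q}$ concentrates, a change of variable yields
$$
\int_{B_n}|u_n|^{r}|m_{\epsilon_n}|^{q-r}\varphi\;\geq\;c\,\sigma_n^{\,q(N-p)/p-N}\Bigl(\int_{\mathbb{R}^N}|U_{i_\infty}|^{q}+o(1)\Bigr),
$$
with $\|U_{i_\infty}\|_q<\infty$ guaranteed by the decay bound in Proposition~\ref{prop4-2} combined with $q>p^{*}-1>N(p-1)/(N-p)$. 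The hypothesis $q>\max\{p,p^{*}-1\}$ is precisely the statement $q(N-p)/p-N>\max\{-p,-(N-p)/p\}$, so the positive main term strictly dominates every error as $\sigma_n\to\infty$, contradicting the Pohozaev identity and forcing $\Lambda_\infty=\emptyset$.

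The main technical obstacle I anticipate is handling the truncation $m_{\epsilon_n}$ in the regime where the bubble peak $\sigma_n^{(N-p)/p}$ exceeds $\epsilon_n^{-1}$, so that $b_{\epsilon_n}$ begins to cut the integrand. I expect to resolve this by restricting to an intermediate annular region in the rescaled variable $y$ where $|U_{i_\infty}|$ is of moderate size, so that $|u_n|\leq \epsilon_n^{-1}$ still holds while a definite positive fraction of $\int|U_{i_\infty}|^{q}$ is retained; alternatively one may exploit the explicit formula $b_{\epsilon_n}=\varphi(\epsilon_n e^{\operatorname{dist}(\epsilon_n x,\mathcal{M})}|u_n|)$ together with $\operatorname{dist}(\epsilon_n x,\mathcal{M})=o(1)$ on $B_n$ to obtain a uniform positive lower bound on $b_{\epsilon_n}$ over the bulk of the bubble.
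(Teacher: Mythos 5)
Your overall strategy coincides with the paper's: apply the local Pohozaev identity of Lemma~\ref{le4-5} with the cut-off of Lemma~\ref{le4-4}, bound its right-hand side by $C\sigma_n^{-(N-p)/p}$ via Lemmas~\ref{adle4-1}, \ref{adle4-2} and \ref{le4-4}, and derive a contradiction from a lower bound on the subcritical surplus supplied by property (7) of Proposition~\ref{prop2-1}. The one step that does not go through as written is your lower bound for the main term. You reduce $|u_n|^r|m_{\epsilon_n}(x,u_n)|^{q-r}$ to $c|u_n|^q$ using $|m_{\epsilon_n}|\geq c\min\{|u_n|,\epsilon_n^{-1}\}$, which only helps where $|u_n|\leq C\epsilon_n^{-1}$; in the rescaled variable $\omega_n(y)=\sigma_n^{-(N-p)/p}u_n(\sigma_n^{-1}y+x_n)$ this is the set $\{|\omega_n(y)|\leq C\epsilon_n^{-1}\sigma_n^{-(N-p)/p}\}$, and since at this stage there is no a priori lower bound on $\epsilon_n$ in terms of $\sigma_n$ (the bound \eqref{4-10} was obtained only under a hypothesis already excluded in Lemma~\ref{le4-3}), the threshold $\epsilon_n^{-1}\sigma_n^{-(N-p)/p}$ may tend to $0$. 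In that regime neither of your proposed patches works: no fixed region where $|U_{i_\infty}|$ is bounded below stays under the truncation level, and $b_{\epsilon_n}$ actually \emph{vanishes} (rather than admitting a positive lower bound) wherever $|u_n|\geq 2\epsilon_n^{-1}e^{-\operatorname{dist}(\epsilon_n x,\mathcal{M})}$, i.e.\ precisely over the bulk of the bubble.

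The repair is to aim for the exponent $r$ rather than $q$, which is what the paper does: on $B_n$ one has $\operatorname{dist}(\epsilon_n x,\mathcal{M})\leq C\epsilon_n\sigma_n^{-1/p}$, hence $|m_{\epsilon_n}|\geq\min\{|u_n|,1\}$ for $n$ large, so $|u_n|^r|m_{\epsilon_n}|^{q-r}\geq |u_n|^r-1$ in \emph{all} regimes; the rescaled weak limit (plus local compactness for the subcritical power $r<p^*$) gives $\int_{B_n}|u_n|^r\varphi\geq c\,\sigma_n^{r(N-p)/p-N}$, and the contradiction with $C\sigma_n^{-(N-p)/p}$ requires exactly $r>p^*-1$, which holds by the choice $r>p_N$. (Your exponent count $q>p^*-1$ plays the same role but is attached to the wrong power.) A second, smaller point: the claim $\int_{B_n}V(\epsilon_n x)|u_n|^p\varphi\leq C\sigma_n^{-p}$ does not follow from $\omega_n\rightharpoonup U_{i_\infty}$ in $\mathcal{D}^p$, because the rescaled domain has radius of order $\sigma_n^{1-1/p}\to\infty$ and $\mathcal{D}^p$-boundedness gives no control of the $L^p$ mass there; the paper instead uses $|u_n|^p\leq\varepsilon|u_n|^r+C_\varepsilon$ together with $|B_n|\leq C\sigma_n^{-N/p}$ and absorbs the $\varepsilon$-term into the main positive term, which again only works once the main term carries the power $r$.
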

\begin{proof}
Since $|\nabla\varphi|\leq C\sigma_n^{\frac{1}{p}}$, $|x-x_n|\leq(\bar{C}+3)\sigma_n^{-\frac{1}{p}}$ and $(x-x_n,\nabla\varphi)\leq 0$ for $x\in\mathcal{A}_n^3$, the right hand of \eqref{4-22} can be estimated as
\begin{equation}\label{4-23}
\begin{aligned}
&\frac{1}{p}\int_{B_n}|\nabla u_n|^p\left(x-x_n,\nabla\varphi\right)-\int_{B_n}|\nabla u_n|^{p-2}\left(x-x_n,\nabla u_n\right)\left(\nabla u_n,\nabla\varphi\right)\\
&-\frac{N-p}{p}\int_{B_n}|\nabla u_n|^{p-2}u_n\left(\nabla u_n,\nabla\varphi\right)+\frac{1}{p}\int_{B_n}\left(V(\epsilon_n x)+\lambda_n\xi_{\epsilon_n}\right)\left(x-x_n,\nabla \varphi\right)|u_n|^p\\
&-\int_{B_n}F_{\epsilon_n}(x,u_n)\left(x-x_n,\nabla \varphi\right)\\
\leq& C\int_{\mathcal{A}_n^3}\left(|\nabla u_n|^p+|u_n||\nabla u_n|^{p-1}|\nabla\varphi|+|u_n|^p+|u_n|^{p^*}+|u_n|^q\right)\\
\leq& C\sigma_n^{-\frac{N-p}{p}}.
\end{aligned}
\end{equation}
On the other hand, considering Proposition \ref{prop2-1} and the fact that $|\epsilon_n(x-x_n,\nabla dist(\epsilon_n x,\mathcal{M}))|\leq C\sigma_n^{-\frac{1}{p}}$, we have
\begin{equation}\label{4-24}
\begin{aligned}
&F_{\epsilon_n}(x,u_n)-\frac{1}{p^*}u_nf_{\epsilon_n}(x,u_n)+\frac{1}{N}(x-x_n,\nabla_xF_{\epsilon_n}(x,u_n))\\
\geq& C|u_n|^r|m_{\epsilon_n}(x,u_n)|^{q-r}-C.
\end{aligned}
\end{equation}
By Lemma \ref{le4-3}, $x_n\in(\mathcal{M}_{\epsilon_n})^{\sigma_n^{-\frac{1}{p}}}$, thus $\forall x\in B_n$, $x\in (\mathcal{M}_{\epsilon_n})^{C\sigma_n^{-\frac{1}{p}}}$. By Proposition \ref{prop2-1}, we have
$$
|m_{\epsilon_n}(x,u_n)|\geq\epsilon_n^{-1}e^{-dist(\epsilon_n x,\mathcal{M})}=\epsilon_n^{-1}e^{-\epsilon_ndist(x,\mathcal{M}_{\epsilon_n})}\geq1,
$$
therefore
$$
|u_n|^r|m_{\epsilon_n}(x,u_n)|^{q-r}\geq|u_n|^r-1,~~~~\hbox{ for } x\in B_n.
$$
Besides for any $\varepsilon>0$
\begin{equation}\label{4-25}
\begin{aligned}
&\left|\int_{B_n}V(\epsilon_n x)|u_n|^p\varphi+\frac{\epsilon_n}{p}\int_{B_n}(x-x_n,\nabla V(\epsilon_n x))|u_n|^p\varphi\right|\\
\leq& C\int_{B_n}|u_n|^p\varphi\leq \varepsilon\int_{B_n}|u_n|^r+C\sigma_n^{-\frac{N}{p}},
\end{aligned}
\end{equation}
and by Lemma \ref{le4-4}, we have
\begin{equation}\label{4-26}
\begin{aligned}
\int_{B_n}\lambda_n\xi_{\epsilon_n}|u_n|^p\varphi+\frac{\lambda_n}{p}\int_{B_n}|(x-x_n,\nabla\xi_{\epsilon_n})||u_n|^p\varphi\leq C\sigma_n^{-\frac{N-p}{p}}.
\end{aligned}
\end{equation}
Denote $D_n=B_{L\sigma_n^{-1}}(x_n)=\{x||x-x_n|\leq L\sigma_n^{-1}\}$, where $L$ is large enough so that
$$
\int_{B_L(0)}|U_{k_\infty}|^r=m>0,
$$
then for $n$ large enough, $D_n\subset B_n$, and since $\sigma_n^{-\frac{N-p}{p}}u_n(\sigma_n^{-1}x+x_n)\rightharpoonup U_{k_\infty}$, we have
\begin{equation}\label{4-27}
\int_{B_n}|u_n|^r\varphi\geq \frac{m}{4}\sigma_n^{\frac{N-p}{p}r-N}.
\end{equation}
Combining \eqref{4-24}, \eqref{4-25}, \eqref{4-26} and \eqref{4-27}, we estimate the left hand of \eqref{4-22} as follows
\begin{equation}\label{4-28}
\begin{aligned}
&\int_{B_n}\left(NF_{\epsilon_n}(x,u_n)-\frac{N-p}{p}u_nf_{\epsilon_n}(x,u_n)\right)\varphi+\int_{B_n}\left(x-x_n,\nabla_xF_{\epsilon_n}(x,u_n)\right)\varphi\\
&-\int_{B_n}\left(V(\epsilon_n x)+\lambda_n\xi_{\epsilon_n}\right)|u_n|^p\varphi-\frac{\epsilon_n}{p}\int_{B_n}\left(x-x_n,\nabla V(\epsilon_n x)\right)|u_n|^p\varphi\\
&-\frac{\lambda_n}{p}\int_{B_n}\left(x-x_n,\nabla\xi_{\epsilon_n}\right)|u_n|^p\varphi\\
\geq &C\int_{B_n}|u_n|^r\varphi-C\sigma_n^{-\frac{N-p}{p}}\geq C\sigma_n^{\frac{N-p}{p}r-N}.
\end{aligned}
\end{equation}
Considering $r>q^*-1$, we deduce a contraction with \eqref{4-23}.
\end{proof}
\noindent\textbf{Proof of Proposition \ref{prop4-1}. } Let $\epsilon_n\to 0$ as $n\to\infty$. From Proposition \ref{prop4-3}, we have that for any $\varepsilon>0$, there exists $\delta>0$ independent of $n$ such that
\begin{equation}\label{4-29}
\sup\limits_{y\in\mathbb{R}^N}\int_{B_\delta(y)}|u_n|^{p^*}<\varepsilon.
\end{equation}
Since $W=|u_n|$ satisfies
$$
-\Delta_p W\leq CW^{p^*-1}~~\hbox{ in } \mathbb{R}^N,
$$
by \eqref{4-29} and Proposition A.5 of \cite{zll}, we deduce that there exists $M>0$ such that
$$
\sup\limits_{x\in\mathbb{R}^N}|u_n(x)|\leq M,
$$
that is Proposition \ref{prop4-1}.\hfill{$\Box$}
\section{Proof of Theorem \ref{th1-1}}\label{5}
Now the situation is similar to the subcritical case. Assume that the sequence $\{u_n\}$ has the profile decomposition
\begin{equation}\label{5-1}
u_n(x)=\sum\limits_{k\in\Lambda_1}U_k(x-x_{n,k})+r_n.
\end{equation}
Denote
$$
\Omega_R^{n}=\mathbb{R}^N\setminus\left\{\bigcup_{k} B_R(y_{n,k})\cup B_R(0)\right\}.
$$
\begin{lemma}\label{le5-1}
Assume that the profile decomposition \eqref{5-1} holds. Denote $x_k^*=\lim\limits_{n\to\infty}\epsilon_n x_{n,k}$. Then $x_k^*\in\mathcal{A}$, i.e., $x_k^*$ is a critical point of $V$ in $\overline{\mathcal{M}}$.
\end{lemma}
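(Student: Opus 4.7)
I plan to first pin down $x_k^*\in\overline{\mathcal{M}}$ and then show $\nabla V(x_k^*)=0$ by a local Pohozaev argument; the conclusion $x_k^*\in\mathcal{A}$ then follows at once since $(V_2)$ forces $|\nabla V|>0$ on $\partial\mathcal{M}$, ruling out $x_k^*\in\partial\mathcal{M}$. Since $\Lambda_\infty=\emptyset$ by Proposition \ref{prop4-3}, Lemma \ref{le4-3} applies: for every $\delta>0$ and $n$ large, $x_{n,k}\in(\mathcal{M}_{\epsilon_n})^\delta$, so $\mathrm{dist}(\epsilon_n x_{n,k},\mathcal{M})\le\epsilon_n\delta\to 0$, giving $x_k^*\in\overline{\mathcal{M}}$.

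For the main step, set $\tilde{u}_n(y):=u_n(y+x_{n,k})$. By Proposition \ref{prop4-2}(3), the uniform $L^\infty$-bound from Proposition \ref{prop4-1}, and standard $C^{1,\alpha}$ regularity for $p$-Laplacian equations with bounded coefficients, $\tilde{u}_n\to U_k$ in $C^1_{loc}(\mathbb{R}^N)$, and $U_k$ solves the autonomous limit equation
\[
-\Delta_p U_k+V(x_k^*)\,|U_k|^{p-2}U_k \;=\; |U_k|^{p^*-2}U_k+\mu\,|U_k|^{q-2}U_k \quad\text{in }\mathbb{R}^N.
\]
For each index $i=1,\dots,N$, multiplying \eqref{1-9} by $\partial_i u_n\cdot\varphi(|x-x_{n,k}|/R)$, with $\varphi$ a smooth radial cutoff ($\varphi=1$ on $[0,1]$, $\varphi=0$ on $[2,\infty)$), integrating, and translating to $y$-coordinates yields a local Pohozaev identity of the form
\[
\frac{\epsilon_n}{p}\int_{\mathbb{R}^N}\partial_i V\bigl(\epsilon_n(y+x_{n,k})\bigr)\,|\tilde{u}_n|^p\,\varphi(|y|/R)\,dy \;+\; \mathcal{P}_n(R) \;+\; \mathcal{T}_n(R) \;=\; \mathcal{B}_n(R),
\]
where $\mathcal{P}_n(R)$ collects the penalization terms containing $\partial_i\xi_{\epsilon_n}$, $\mathcal{T}_n(R)=\int\partial_{x_i}F_{\epsilon_n}$ (controlled by Proposition \ref{prop2-1}(8)), and $\mathcal{B}_n(R)$ is a boundary-region integral supported on $\{R\le|y|\le 2R\}$ coming from $\nabla\varphi$. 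Performing the identical computation on the autonomous equation for $U_k$ produces $\mathcal{B}^*(R)=0$ identically, since no $\nabla_x V$ contribution enters.

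When $x_k^*\in\mathcal{M}^\circ$, for $n$ large $B_{2R}(x_{n,k})\subset\mathcal{M}_{\epsilon_n}$, so $\xi_{\epsilon_n}\equiv 0$ on that ball and $m_{\epsilon_n}(x,t)=t$ for $|t|\le M$ by Proposition \ref{prop2-1}(2); hence $\mathcal{P}_n(R)\equiv\mathcal{T}_n(R)\equiv 0$. Dividing the identity by $\epsilon_n$ and letting $n\to\infty$, the left-hand side converges to $\frac{1}{p}\partial_i V(x_k^*)\int|U_k|^p\varphi(|y|/R)\,dy$, which tends to $\frac{1}{p}\partial_i V(x_k^*)\|U_k\|_p^p$ as $R\to\infty$ and is strictly positive whenever $\partial_i V(x_k^*)\neq 0$ (using $U_k\not\equiv 0$). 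So the argument reduces to showing $\mathcal{B}_n(R)/\epsilon_n\to 0$.

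This rate estimate is the main obstacle: $C^1_{loc}$ convergence alone only yields $\mathcal{B}_n(R)\to 0$, not $o(\epsilon_n)$. My plan is to view $w_n:=\tilde{u}_n-U_k$ as a solution of a linearized perturbation of the $p$-Laplacian equation with source of size $O(\epsilon_n)$, arising from the expansions $V(\epsilon_n(y+x_{n,k}))-V(x_k^*)=O(\epsilon_n|y|)$ and $f_{\epsilon_n}-f=O(\epsilon_n)$ uniformly on compacts, and then invoke quantitative $C^{1,\alpha}$ comparison estimates for the $p$-Laplacian to deduce $\|w_n\|_{C^1(B_{2R})}=O(\epsilon_n)$. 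This gives $\mathcal{B}_n(R)=\mathcal{B}^*(R)+O(\epsilon_n)\,c(R)=O(\epsilon_n)\,c(R)$, where a Fubini/averaging trick selecting $R$ in $[R_0,2R_0]$ together with the decay of $U_k$ in Proposition \ref{prop4-2}(1) makes $c(R)$ as small as needed; thus $\partial_i V(x_k^*)=0$ for each $i$, whence $\nabla V(x_k^*)=0$. The remaining case $x_k^*\in\partial\mathcal{M}$ I would exclude as in Lemma \ref{le4-4}: the now-nontrivial term $\mathcal{P}_n(R)$, contracted against $\vec{n}(x_k^*)$ and exploiting the outward direction of $\nabla\xi_{\epsilon_n}$ together with $\vec{n}\cdot\nabla V>0$, carries a definite sign incompatible with the Pohozaev identity, yielding the contradiction.
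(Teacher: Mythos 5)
Your overall strategy (a translation-type Pohozaev identity contracted against $\nabla V(x_k^*)$, forcing $\nabla V(x_k^*)=0$ and then invoking $(V_2)$ to exclude $x_k^*\in\partial\mathcal{M}$) is in the right spirit, and your reduction of the problem to showing $\mathcal{B}_n(R)=o(\epsilon_n)$ correctly identifies where the difficulty lies. But the step you offer to close that gap does not work, and it is precisely the step the paper's proof is designed to avoid. The claimed rate $\|\tilde{u}_n-U_k\|_{C^1(B_{2R})}=O(\epsilon_n)$ is unjustified on at least three counts: (i) the source term in the equation for $w_n$ contains $\bigl(V(\epsilon_n(y+x_{n,k}))-V(x_k^*)\bigr)|U_k|^{p-2}U_k$, and since $\epsilon_n x_{n,k}\to x_k^*$ comes with no rate, this is only $o(1)$, not $O(\epsilon_n)$; (ii) the profile decomposition \eqref{5-1} leaves a remainder $r_n$ and the other bumps $U_l(\cdot-x_{n,l})$, which vanish on $B_{2R}(x_{n,k})$ without any quantitative rate in $\epsilon_n$; (iii) a ``quantitative $C^{1,\alpha}$ comparison estimate'' that converts an $O(\epsilon_n)$ source into an $O(\epsilon_n)$ bound on $\nabla w_n$ amounts to linearizing the $p$-Laplacian, which degenerates (for $p>2$) or blows up (for $p<2$) at zeros of $\nabla U_k$ and is not available in the generality you need. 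So the central inequality of your argument is not established.

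The paper sidesteps the rate problem entirely by choosing the cutoff at the scale $\delta\epsilon_n^{-1}$ rather than at a fixed $R$: the test function $\eta$ in \eqref{5-9} equals $1$ on $B_{\delta\epsilon_n^{-1}}(x_{n,k})$ and is supported in $B_{2\delta\epsilon_n^{-1}}(x_{n,k})$, so the boundary terms live on the annulus $T_n$ where, by the comparison/iteration estimates \eqref{5-4}--\eqref{5-5}, $u_n$ and $\nabla u_n$ are exponentially small; hence the right-hand side of the identity is $O(e^{-\tilde{\alpha}\delta\epsilon_n^{-1}})$, which beats $\epsilon_n$ trivially, while the $\nabla V$ term on the left is bounded below by $C\epsilon_n\int_{B_{\delta\epsilon_n^{-1}}}|u_n|^p\geq C\epsilon_n$. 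Note also that no passage to the limiting autonomous equation is needed, and the boundary case $x_k^*\in\partial\mathcal{M}$ requires no separate treatment: by \eqref{5-7} the penalization term $(t_k,\nabla\xi_{\epsilon_n})$ has a favorable sign and is simply discarded in the lower bound \eqref{5-11}. To repair your proof you would need to import the exponential decay estimates \eqref{5-4}--\eqref{5-5} (which follow from the test-function argument with $\varphi=w_n\phi^p$ and iteration over annuli) and move your cutoff to the $\delta\epsilon_n^{-1}$ scale; as written, the argument has a genuine gap.
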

\begin{proof}
By \eqref{5-1}, we have $\|u_n\|_{L^{p^*}(\Omega_R^{(n)})}=O_R(1)$, by Moser's iteration, we have $\|u_n\|_{L^\infty(\Omega_R^{(n)})}=O_R(1)$. Denote $w_n=|u_n|$, then $w_n$ satisfies
\begin{equation}\label{5-2}
-\Delta_p w_n+w_n^{p-1}+\lambda_n\xi_{\epsilon_n}w_n^{p-1}\leq 0.
\end{equation}
Taking $\varphi=w_n\phi^p$ as a test function in \eqref{5-2}, where $\phi(x)=1$ for $x\in\Omega_{R+1}^{(n)}$, $\phi(x)=0$ for $x\notin\Omega_{R}^{(n)}$, $|\nabla \phi|\leq 2$. Then we get that
$$
\int_{\mathbb{R}^N}|\nabla w_n|^p\phi^p+\int_{\mathbb{R}^N}w_n^p\phi^p+\lambda_n\int_{\mathbb{R}^N}\xi_{\epsilon_n}w_n^p\phi^p\leq C\int_{\mathbb{R}^N}|\nabla w_n|^{p-1}\phi^{p-1}w_n|\nabla \phi|,
$$
thus
\begin{equation}\label{5-3}
\int_{\Omega_{R+1}^{(n)}}|\nabla w_n|^p+\int_{\Omega_{R+1}^{(n)}}w_n^p+\lambda_n\int_{\Omega_{R+1}^{(n)}}\xi_{\epsilon_n}w_n^p\leq C\int_{\Omega_R^{(n)}\setminus\Omega_{R+1}^{(n)}}w_n^p,
\end{equation}
which yields that
\begin{align*}
\int_{\Omega_{R+1}^{(n)}}w_n^p\leq C \int_{\Omega_R^{(n)}\setminus\Omega_{R+1}^{(n)}}w_n^p,
\end{align*}
that is
$$
\int_{\Omega_{R+1}^{(n)}}w_n^p\leq\frac{C}{C+1}\int_{\Omega_{R}^{(n)}}w_n^p.
$$
By iteration, we have
\begin{equation}\label{5-4}
\int_{\Omega_R^{(n)}}w_n^p\leq Ce^{-\alpha R},
\end{equation}
where $\alpha>0$ satisfies that $e^{-\alpha}=\frac{C}{C+1}$. Returning to \eqref{5-3}, we have
\begin{equation}\label{5-5}
\int_{\Omega_R^{(n)}}|\nabla w_n|^p+\lambda_n\int_{\Omega_R^{(n)}}\xi_{\epsilon_n}w_n^p\leq Ce^{-\alpha R}.
\end{equation}
Since $dist(x_{n,k},\mathcal{M}_{\epsilon_n})\leq c<\infty$, we have $x_k^*=\lim\limits_{n\to\infty}\epsilon_nx_{n,k}\in\overline{\mathcal{M}}$. If $x_k^*\notin \mathcal{A}$, then $t_k=\nabla V(x_k^*)\neq 0$, we deuce that there exists $\delta_1>0$ such that for $x\in B_{\delta_1}(x_k^*)$,
\begin{equation}\label{5-6}
(t_k,\nabla V(x))\geq\frac{1}{2}|t_k|^2>0,
\end{equation}
and by assumption $(V_2)$, $\delta_1$ may be chosen so small that
\begin{equation}\label{5-7}
(t_k, \nabla dist(x, \mathcal{M}))\geq 0.
\end{equation}
Set
$$
\delta_2=\min\left\{|y_k^*-y_l^*|\left|y_k^*\neq y_l^*,~~k,~~l\in\Lambda_1\right.\right\}.
$$
Choose
$$
0<\delta<\min\left\{\frac{1}{2}\delta_1,\frac{1}{100}\delta_2\right\}.
$$
Denote
$$
\tilde{B}_n=\{x||x-x_{n,k}|\leq 2\delta\epsilon_n^{-1}\},
$$
$$
T_n=\{x|\delta\epsilon_n^{-1}\leq|x-x_{n,k}|\leq2\delta\epsilon_n^{-1}\},
$$
then $T_n\subset\Omega_{\delta\epsilon_n^{-1}}^{(n)}$. By \eqref{5-4}, \eqref{5-5}, we have
\begin{equation}\label{ad5-1}
\int_{T_n}w_n^{p}\leq Ce^{-\alpha\delta\epsilon_n^{-1}},
\end{equation}
\begin{equation}\label{5-8}
\int_{T_n}|\nabla w_n|^p+\lambda_n\int_{T_n}\xi_{\epsilon_n}w_n^p\leq Ce^{-\alpha\delta\epsilon_n^{-1}}.
\end{equation}
Choose $\eta\in C_0^\infty(\mathbb{R}^N)$ such that $\eta(x)=1$ for $|x-x_{n,k}|\leq \delta\epsilon_n^{-1}$; $\eta(x)=0$ for $|x-x_{n,k}|\geq 2\delta\epsilon_n^{-1}$ and $|\nabla \eta|\leq\frac{2}{\delta}\epsilon_n$. As in \eqref{4-14}, we have
\begin{equation}\label{5-9}
\begin{aligned}
&\frac{\epsilon_n}{p}\int_{\tilde{B}_n}(t_k,\nabla V(\epsilon_n x))|u_n|^p\eta+\frac{\lambda_n}{p}\int_{\tilde{B}_n}(t_k,\nabla \xi_{\epsilon_n})|u_n|^p\eta-\int_{\tilde{B}_n}\left(t_k,\nabla_xF_{\epsilon_n}(x,u_n)\right)\eta\\
=&\int_{T_n}|\nabla u_n|^{p-2}(t_k,\nabla u_n)(\nabla u_n,\nabla \eta)-\frac{1}{p}\int_{T_n}|\nabla u_n|^p(t_k,\nabla \eta)\\
&-\frac{1}{p}\int_{T_n}\left(V(\epsilon_n x)+\lambda_n\xi_{\epsilon_n}\right)|u_n|^p(t_k,\nabla \eta)+\int_{T_n}F_{\epsilon_n}(x,u_n)(t_k,\nabla \eta).
\end{aligned}
\end{equation}
By \eqref{ad5-1} and \eqref{5-8}, the right hand side of \eqref{5-9} can be estimated as follows
\begin{equation}\label{5-10}
\begin{aligned}
&\int_{T_n}|\nabla u_n|^{p-2}(t_k,\nabla u_n)(\nabla u_n,\nabla \eta)-\frac{1}{p}\int_{T_n}|\nabla u_n|^p(t_k,\nabla \eta)\\
&-\frac{1}{p}\int_{T_n}\left(V(\epsilon_n x)+\lambda_n\xi_{\epsilon_n}\right)|u_n|^p(t_k,\nabla \eta)+\int_{T_n}F_{\epsilon_n}(x,u_n)(t_k,\nabla \eta)\\
\leq& Ce^{-\tilde{\alpha}\delta\epsilon_n^{-1}}.
\end{aligned}
\end{equation}
On the other hand, from \eqref{5-6} and \eqref{5-7}, we obtain that
\begin{equation}\label{5-11}
\begin{aligned}
&\frac{\epsilon_n}{p}\int_{\tilde{B}_n}(t_k,\nabla V(\epsilon_n x)|u_n|^p\eta+\frac{\lambda_n}{p}\int_{\tilde{B}_n}(t_k,\nabla \xi_{\epsilon_n})|u_n|^p\eta-\int_{\tilde{B}_n}(t_k,\nabla_xF_{\epsilon_n}(x,u_n)\eta\\
\geq& \frac{\epsilon_n}{p}\int_{\tilde{B}_n}(t_k,\nabla V(\epsilon_n x)|u_n|^p\eta+\frac{\lambda_n}{p}\int_{\tilde{B}_n}(t_k,\nabla \xi_{\epsilon_n})|u_n|^p\eta\\
\geq&C\epsilon_n\int_{B_{\delta\epsilon_n^{-1}}(x_{n,k})}|u_n|^p\geq C\epsilon_n.
\end{aligned}
\end{equation}
By \eqref{5-10} and \eqref{5-11}, we deduce a contradiction.
\end{proof}
\noindent\textbf{Proof of Theorem \ref{th1-1}. } Since $\mathcal{A}$ is a compact subset of $\mathcal{M}$,
$$
dist(\mathcal{A},\partial\mathcal{M})>0.
$$
Choosing $0<\delta<dist(\mathcal{A},\partial\mathcal{M})$, by Lemma \ref{le5-1}, we obtain that
\begin{equation}\label{5-12}
\lim\limits_{n\to\infty}\int_{\mathbb{R}^N}\xi_{\epsilon_n}|u_n|^p=0.
\end{equation}
Also by \eqref{5-4}, we have that for $R$ large
$$
\int_{\mathbb{R}^N\setminus\{\cup B_R(x_{n,k})\cup B_R(0)\}}|u_n|^p\leq Ce^{-\alpha R}.
$$
By Moser's iteration,
\begin{equation}\label{5-13}
|u_n(x)|\leq Ce^{-\alpha R},~~~~\hbox{ for } x\in\mathbb{R}^N\setminus\{\cup B_R(x_{n,k})\cup B_R(0)\}.
\end{equation}
Denote $R_n(x)=\min \limits_{k\in\Lambda_1}\{|x|,|x-x_{n,k}|\}$, considering Proposition \ref{prop4-1}, we obtain
$$
|u_n(x)|\leq Ce^{-\alpha R_n(x)}.
$$
Since $\epsilon_nx_{n,k}\to x_k^*\in\mathcal{A}$, for any $\delta>0$, there exists  $\bar{\epsilon}$ such that for $\epsilon_n<\bar{\epsilon}$, $dist(\epsilon_nx_{n,k},\mathcal{A})<\delta$, hence $R_n(x)\geq dist(x,(\mathcal{A}^\delta)_{\epsilon_n})$ and
\begin{equation}\label{5-14}
\begin{aligned}
|u_n(x)|\leq&Ce^{-\alpha dist(x,(\mathcal{A}^\delta)_{\epsilon_n})}\\
\leq &Ce^{-\alpha dist(x,\mathcal{M}_{\epsilon_n})}.
\end{aligned}
\end{equation}
By $(2)$ of Proposition \ref{prop2-1}, $u_n$ is a solution of \eqref{1-2}, thus by Theorem \ref{th3-2}, we deduce that \eqref{1-1} admits infinitely many solutions.\hfill{$\Box$}


\begin{thebibliography}{99}
\bibitem{as} C. Alves, S.H.M. Soares, On the location and profile of spike-layer nodal solutions to nonlinear Schr\"odinger equations, \textit{J. Math. Anal. Appl.}, 296(2004), 563--577.
\bibitem{ams} A.  Ambrosetti, A. Malchiodi and S. Secchi, Multiplicity results for some nonlinear  Schr\"odinger equations with potentials, \textit{Arch. Ration. Mech. Anal.}, 159(2001), 253--271.
\bibitem{ar} A. Ambrosetti, P. Rabinowitz, Dual variational methods  in critical point theory and applications, \textit{J. Funct. Anal.}, 14(1973), 349--381.
\bibitem{bcw} T. Bartsch, M. Clapp and T. Weth, Configuration spaces, transfer and $2-$nodal solutions of semiclassical nonlinear Schr\"odinger equation, \textit{Math. Ann.}, 338(2007), 147--185.
\bibitem{blw} T. Bartsch, Z. Liu and T. Weth, Nodal solutions of a $p-$Laplacian equation, \textit{ Proc. Lond. Math. Soc.}, 91(2005), 129--152.
\bibitem{bj} J. Byeon, L. Jeanjean, Standing wavs for nonlinear Schr\"odinger equations with a general nonlinearity. \textit{Arch. Rational Mech. Anal.}, 185(2007), 185--200.
\bibitem{cpy} D. Cao, S. Peng and S. Yan, Infinitely many solutions for $p-$laplacian equation involving critical Sobolev growth, \textit{J. Funct. Anal.}, 262(2012), 2861--2902.
\bibitem{clw} S. Chen, J. Liu and Z.-Q. Wang, Localized nodal solutions for a critical nonlinear Schr\"odinger equation, \text{J. Funct. Anal.}, 277(2019), 594--640.
\bibitem{cw} S. Chen, Z.-Q. Wang, Localized nodal sign-changing solutions of higher topological type for semiclassical nonlinear Schr\"odinger equations, \textit{Calc. Var.}, 56(2017), Art 1, 26 pp.
\bibitem{dp1} T. D'Aprile, A. Pistoia, On the number of sign-changing solutions of a semiclassical nonlinear Schr\"odinger equation, \textit{Adv. Differential Equations}, 12(2007), 737--758.
\bibitem{dp2} T. D'Aprile, A. Pistoia, Existence, multiplicity and profile of sign-changing clustered solutions of a semiclassical nonlinear Schr\"odinger equation, \textit{Ann. Inst. H. Poincar$\acute{e}$ Anal. Non Lin$\acute{e}$aire}, 26(2009), 1423--1451.
\bibitem{df1} M. del Pino, P. Felmer, Local mountain passes for a semilinear elliptic problems in unbounded domains, \textit{Calc. Var. PDE.}, 4(1996), 121-137.
\bibitem{df2} M. del Pino, P. Felmer, Multi-peak bound states for nonlinear Schr\"odinger equations, \textit{Ann. Inst. H. Poincar$\acute{e}$ Anal. Non Lin$\acute{e}$aire}, 15(1998), 127--149.
\bibitem{ds1} G. Divillanova, S. Solimini, Concentration estimates and multiple solutions to elliptic problems at critical growth, \textit{Adv. Differential Equations}, 7(2002), 1257--1280.
\bibitem{f} M. Fei, Sign-changing multi-peak solutions for nonlinear Schr\"odinger equations with compactly supported potential, \textit{Acta Appl. Math.}, 127(2013), 137--154.
\bibitem{fw} A. Floer, A. Weinstein, Nonspreading wave packets for the cubic Schr\"odinger equations with a bounded potential, \textit{ J. Funct. Anal.}, 69(1986), 397--408.
\bibitem{gv} M. Guedda, L. Veron, Quasilinear elliptic equations involving critical Sobolev exponents, \textit{Nonlinear Anal.}, 13(1989), 879--902.
\bibitem{g} C. Gui, Existence of multi-bump solutions for nonlinear Schr\"odinger equations via variational method, \textit{Comm. P.D.E.}, 21(1996), 787--820.
\bibitem{kw} W. Kang, J. Wei, On interacting bumps of semi-classical states of nonlinear Schr\"odinger equations, \textit{Adv. Differential Equations}, 5(2000), 899--928.
\bibitem{llw-jde} J. Liu, X. Liu and Z.-Q. Wang, Sign-changing solutions for coupled nonlinear Schr\"odinger equations with critical growth, \textit{J. Differential Equations}, 261(2016), 7194--7236.
\bibitem{llw1} X. Liu, X. Liu and Z.-Q. Wang, Localized nodal solutions for quasilinear Schr\"odinger equations, \textit{J. Differential Equations}, 267(2019), 7411--7461.
\bibitem{lz} X. Liu, J. Zhao, $p-$Laplacian equations in $\mathbb{R}^N $ with finite potential via truncation method, \textit{Adv. Nonlinear Stud.}, 17(2017), 595--610.
\bibitem{mw} M. Musso, J. Wei, Nondegeneracy of nonradial nodal solutions to Yamabe problem, \textit{Commun. Math. Phys.}, 340(2015), 1049--1107.
\bibitem{o1} Y.-G. Oh, Existence of semi-classical bound states of nonlinear Schr\"odinger equations with potential of the class $(V)_a$, \textit{Comm. P.D.E.} 13(1988), 1499--1519.
\bibitem{o2} Y.-G. Oh, On positive multi-bump bound states of nonlinear Schr\"odinger equations under multi-well potentials, \textit{Comm. Math. Physics}, 131(1990), 375--399.
\bibitem{r} P. Rabinowitz, On a class of nonlinear Schr\"odinger equations, \textit{Z. Angew. Math. Phys.}, 43(1992), 270--291.
\bibitem{w} M. Willem, Minimax Theorems, \textit{Birkhauser}, Basel (1996).
\bibitem{zll} J. Zhao, X. Liu and J. Liu, $p-$Laplacian equations in $\mathbb{R}^N$ with finite potential via truncation method, the critical case, \textit{J. Math. Anal. Appl.}, 455(2017), 58--88.
\bibitem{zll1} J. Zhao, X. Liu and J. Liu, Infinitely many sign-changing solutions for system of $p-$Laplacian equations in $\mathbb{R}^N$, \textit{Nonlinear Anal.}, 182(2019), 113--142.
\end{thebibliography}
\end{document}